\documentclass[article,11pt]{article}
\usepackage{graphicx}
\usepackage{amsmath,amsthm}
\usepackage{mathrsfs,amsfonts}
\usepackage{amsfonts}
\usepackage{amsmath,amssymb,amsfonts}
\usepackage{algorithm}
\usepackage{algpseudocode}
\usepackage{tikz}
\usepackage{amssymb}
\usepackage{color}
\usepackage{amsmath}  
\usepackage{amsfonts} 
\usepackage{graphicx} 
\usepackage{braket}
\usepackage{enumerate}
\usepackage{indentfirst}
\usepackage{longtable}
\usepackage{setspace}
\allowdisplaybreaks[4]

\def\si{{\sigma}}

\def\<{\left<}\def\>{\right>}\def\({\left(}\def\){\right)}

\newfam\msbmfam\font\tenmsbm=msbm10\textfont
\msbmfam=\tenmsbm\font\sevenmsbm=msbm7

\scriptfont\msbmfam=\sevenmsbm\def\bb#1{{\fam\msbmfam #1}}

\def\FF{\bb F}
\def\NN{\bb N}\def\PP{\bb P}
\def\RR{\bb R}\def\SS{\bb S}
\def\cB{{\cal B}}\def\cL{{\cal L}}
\def\cF{{\cal F}}
\def\cH{{\cal H}}
\def\cL{{\cal L}}
\def\cP{{\cal P}}\def\cS{{\cal S}}\def\cU{{\cal U}}

\usepackage{amsmath,amsfonts,amsthm,amssymb}    
\usepackage{gensymb, mathrsfs}                  

\usepackage{accents}
\DeclareMathSymbol{\widehatsym}{\mathord}{largesymbols}{"62}

\def\*#1{\mathbf{#1}}

\renewcommand{\bar}{\overline}
\renewcommand{\tilde}{\widetilde}
\renewcommand{\phi}{\varphi}

\makeatletter \renewcommand\d[1]{\ensuremath{%
  \;\mathrm{d}#1\@ifnextchar\d{\!}{}}}
\makeatother

\theoremstyle{plain}
\newtheorem{thm}{Theorem}[section]

\theoremstyle{definition}

\newtheorem{assume}[thm]{Assumption}


\newcommand{\beq}{\begin{equation}}
\newcommand{\eeq}{\end{equation}}

\definecolor{c}{rgb}{0.9,0.3,0.1}
\definecolor{b}{rgb}{0.1,0.3,0.9}

\newtheorem{remark}{Remark}[section]
\newtheorem{lemma}{Lemma}[section]
\newtheorem{proposition}{Proposition}[section]
\newtheorem{theorem}{Theorem}[section]
\newtheorem{definition}{Definition}[section]

\renewcommand{\theequation}{\arabic{section}.\arabic{equation}}

\def\si{{\sigma}}

\def\<{\left<}\def\>{\right>}\def\({\left(}\def\){\right)}

\newfam\msbmfam\font\tenmsbm=msbm10\textfont
\msbmfam=\tenmsbm\font\sevenmsbm=msbm7
\scriptfont\msbmfam=\sevenmsbm\def\bb#1{{\fam\msbmfam #1}}

\def\NN{\bb N}\def\PP{\bb P}
\def\RR{\bb R}
\def\FF{\bb F}

\def\cB{{\cal B}}\def\cF{{\cal F}}
\def\cH{{\cal H}}\def\cK{{\cal K}}\def\cL{{\cal L}}\def\cP{{\cal P}}
\def \cS{{\cal S}}

\numberwithin{equation}{section}

\begin{document}

\title{\large \bf Constrained Zero-Sum Stochastic Linear-Quadratic Differential Game for Jump-Diffusion Systems with Random Coefficients}

\author{ Yanyan Tang\thanks{Department of Applied Mathematics,  Hong Kong Polytechnic University, Hong Kong, China ({\tt 12131233@mail.sustech.edu.cn}).}  , \  
Xun Li\thanks{ Department of Applied Mathematics,  Hong Kong Polytechnic University, Hong Kong, China ({\tt li.xun@polyu.edu.hk}).}, \  
Jie Xiong\thanks{ Department of Mathematics and SUSTech International Center for Mathematics, Southern University of Science and Technology, Shenzhen, Guangdong, 518055, China ({\tt xiongj@sustech.edu.cn})
This author is supported by  National Key R\&D Program of China grant 2022YFA1006102, and
National Natural Science Foundation of China Grants 11831010 and 12326368.}\; }
\date{}
\maketitle 
 \bigskip

  \textbf{Abstract.} 
This paper studies a two-player zero-sum stochastic
linear-quadratic (SLQ) differential game for controlled
jump-diffusion systems with random coefficients, where the controls of both players are constrained to nonempty closed convex cones. Under a uniform convexity--concavity condition, we establish the existence and uniqueness of an open-loop saddle point and characterize it by a forward--backward stochastic differential equation with jumps (FBSDEJ) together with cone-type variational inequalities.  Assuming the existence of positive bounded solutions to the associated system of indefinite extended stochastic Riccati equations with jumps (IESREJs), we derive a feedback-form representation of the unique open-loop saddle point by constructing predictable minimax selectors and combining the Meyer--It\^o formula with jumps, and the FBSDEJ characterization. Finally, under additional structural conditions, we prove the existence of positive bounded solutions to the IESREJs by a double-truncation approximation and a multidimensional BSDEJ comparison theorem.

\bigskip

\noindent \textbf{Keywords.} Zero-sum stochastic differential game; random coefficients; cone constraints; jump-diffusion systems;  extended stochastic Riccati equations with jumps. 
\\
\noindent \textbf{AMS subject classifications.}
60J76, 91A05, 91A15.
\section{Introduction}
Let $(\Omega,\mathcal F,\mathbb P)$ be a complete probability space supporting a one-dimensional Brownian motion $W$ and a Poisson random measure $N(dt,dz)$ on $[0,T]\times U_0$, where $U_0$ is a nonempty Borel subset of a Euclidean space. We assume that $W$ and $N$ are independent and that the compensator of $N$ is $\nu(dz)dt$, with $\nu(U_0)<\infty$. Let $\mathbb F=\{\mathcal F_t\}_{0\le t\le T}$ be the usual augmentation of the filtration generated by $W$ and $N$, and write $
\widetilde N(dt,dz):=N(dt,dz)-\nu(dz)dt.$

We consider the scalar controlled jump-diffusion
\begin{equation}\label{state}
\left\{
\begin{aligned}
dX(t)={}&\Big(A(t)X(t)+B_1(t)^\top u_1(t)+B_2(t)^\top u_2(t)\Big)dt\\
&+\Big(C(t)X(t)+D_1(t)^\top u_1(t)+D_2(t)^\top u_2(t)\Big)dW(t)\\
&+\int_{U_0}\Big(E(t,z)X(t-)+F_1(t,z)^\top u_1(t)
+F_2(t,z)^\top u_2(t)\Big)\widetilde N(dt,dz),\\
X(0)={}&x,
\end{aligned}
\right.
\end{equation}
where $x\in\mathbb R$. Let $\mathcal P$ denote the $\mathbb F$-predictable $\sigma$-field and $\cB(U_0)$ denote the Borel $\si$-field on $U_0$. The processes $A,B_1,B_2,C,D_1,D_2$ are $\mathbb F$-predictable, whereas $E,F_1,F_2$ are $\mathcal P\otimes\mathcal B(U_0)$-measurable.

Let $\Pi_k\subseteq\mathbb R^{m_k}$, $k=1,2$, be nonempty closed convex cones. Thus, if $u\in\Pi_k$, then $\lambda u\in\Pi_k$ for all $\lambda\ge0$.  The processes $u_1(\cdot)$ and $u_2(\cdot)$ denote the controls of Player 1 and Player 2, respectively, and they belong to the following spaces:
\[
\mathcal U_k
:=
\left\{
 u_k:[0,T]\times\Omega\to\Pi_k:
 u_k\text{ is predictable and }
 \mathbb E\int_0^T|u_k(t)|^2dt<\infty
\right\}.
\]
Set $\mathcal U:=\mathcal U_1\times\mathcal U_2$, and denote by $X_x^{u_1,u_2}$ the state process corresponding to $(u_1,u_2)\in\mathcal U$. The performance functional is
\begin{equation}\label{cost}
\begin{aligned}
J(x;u_1,u_2)
:=\mathbb E\Bigg[&\int_0^T\Big(
Q\big|X_x^{u_1,u_2}(t)\big|^2
+u_1(t)^\top R_{11}(t)u_1(t)
+2u_1(t)^\top R_{12}(t)u_2(t)
\\
&\ \  +u_2(t)^\top R_{22}(t)u_2(t)
\Big)dt+G(T)\big|X_x^{u_1,u_2}(T)\big|^2\Bigg].
\end{aligned}
\end{equation}
Here $Q,R_{11},R_{12},R_{22}$ are predictable and $G(T)$ is $\mathcal F_T$-measurable. 

In our problem, the functional (\ref{cost}) can be regarded as the loss of Player 1 and the gain of Player 2. Therefore, Player 1 aims to minimize (\ref{cost}) by selecting a control $u_1\in\cU_1$, while Player 2 aims to maximize (\ref{cost}) by selecting a control $u_2\in\cU_2$. Then the problem considered in the paper is
the constrained two-person zero-sum SLQ differential game of jump diffusion systems with random coefficients (denoted as Problem (C-ZLQJ)), which is specifically described as follows:

\medskip
\noindent\textbf{Problem (C-ZLQJ).} For any $x\in\RR$,  find an admissible control pair  $(u_1^*, u_2^*)\in \cU_1\times\cU_2$ such that 
$$\inf_{u_1\in\cU_1}J(x; u_1, u_2^*)=J(x; u_1^*,u_2^*)=\sup_{u_2\in\cU_2}J(x; u_1^*,u_2).$$

The study of zero-sum differential games can be traced back to the pioneering work of Fleming and   Souganidis \cite{MR956957}, which  proved that upper and lower value functions satisfy the dynamic programming principle  and  are the unique viscosity solutions to the associated Hamilton-Jacobi-Bellman-Isaacs (HJBI) equations. Subsequent BSDE approaches were developed by Buckdahn and Li \cite{MR2373477} for discussed  two-player  zero-sum stochastic differential games   with recursive utilities.  Based on BSDE theory, Wang and Yu \cite{MR2675843,MR2889426}  studied non-zero-sum differential games where the state process is described by BSDEs. Mou and Yong \cite{MR499112900323} studied open-loop saddle points by a Hilbert-space method, whereas Sun and Yong
\cite{MR3342161} developed both open-loop and closed-loop solvability theories.  Sun \cite{MR4253799} analyzed open-loop saddle points and the open-loop lower and upper values, and showed that under a sufficient condition the associated Riccati equation
admits a unique strongly regular solution, yielding a closed-loop representation of the open-loop saddle point.   Wu et al. \cite {wu2024open} extended the work of \cite {MR4253799} and explored SLQ differential games with regime switching. For more related works, see \cite{{MR4569666}, {MR4886360}, {MR33421611301}}.

 To place the random-coefficient game in context, we first
recall the corresponding Riccati theory for one-player SLQ
control. For random-coefficient SLQ control problems,
feedback synthesis is closely tied to the solvability of the
associated stochastic Riccati equations (SREs), which are
matrix-valued BSDEs with highly nonlinear generators. Early
work goes back to Bismut \cite{BI1976}, who initiated the
study of random-coefficient SLQ control and treated several
special cases. Tang \cite{T2003} subsequently established
the existence and uniqueness of solutions to the general
matrix-valued SRE by relating it to the stochastic Hamiltonian system and the associated stochastic flows. For jump-diffusion systems,  Zhang, Dong, and Meng
\cite{ZDM2020} studied matrix-valued stochastic Riccati equations with jumps (SREJs) associated with
random-coefficient SLQ control problems driven jointly by Brownian and Poisson noises, and proved the existence and uniqueness of their solutions. 
 
The Riccati equations corresponding to zero-sum SLQ differential games with random coefficients are highly nonlinear and indefinite SREs. Indefinite Riccati equations also arise in SLQ problems and there have been some results concerning their solvability in some special cases, see Hu and Zhou \cite{MR2175763003}, Du \cite{MR237347712003}, Sun et al. \cite{MR4254486}, and  Qian and Zhou \cite{MR4991129003}.  In contrast, the indefinite SREs derived from zero-sum SLQ differential games are fundamentally different from those obtained in optimal control problems. The key reason lies in the conflicting objectives of the two players in a zero-sum game, which naturally result in control weighting matrices with opposite signs and, consequently, an indefinite formulation. Hence, proving the solvability of SREs derived from zero-sum SLQ differential games presents substantial analytical difficulties. Due to these analytical difficulties, relatively few results are available for zero-sum SLQ problems with random coefficients. A particularly close work is that of Zhang and
Xu \cite{MR4837915}, who studied zero-sum SLQ differential games with random coefficients and non-Markovian regime switching and proved the solvability of a class of multidimensional indefinite SREs using BSDE methods.

 In mathematical finance, no-shorting restrictions in  mean--variance portfolio selection provide a canonical
motivation for cone constraints; see \cite{MR1882807}. In that setting, the admissible control set is the nonnegative
orthant, while more general models allow the control to take values in an arbitrary nonempty closed convex cone. Hu and
Zhou \cite{MR2175763} established the solvability of the associated ESREs using BSDE theory and truncation techniques. Dong \cite{MR237347712} incorporated a random
jump, leading to a system of ESREs with jumps (ESREJs), whose solvability was obtained through two recursive BSDE
systems. Hu, Shi, and Xu \cite{MR4386532,HSX2025} subsequently treated
regime-switching and Wiener--Poisson models, respectively, using multidimensional BSDE and BSDEJ comparison methods.
More recently, Shi and Xu \cite{MR4991129} studied a cone-constrained SLQ control problem with random coefficients under regime switching and controlled jump size, and established the solvability of the resulting
multidimensional fully coupled SREJs. These works concern one-player control problems with nonnegative or positive-semidefinite weighting structures. They do not
directly cover the indefinite minimax Hamiltonians arising in the present zero-sum game. In particular, the opposing
objectives of the two players lead naturally to indefinite control weights and hence to indefinite extended SREs with
jumps (IESREJs), so the preceding ESRE solvability results are not directly applicable.

Our work extends the controlled stochastic problem proposed by Hu et al. \cite{HSX2025} to the framework of zero-sum stochastic differential games. Relative to \cite{HSX2025}, the present model replaces a one-player minimization problem by a two-player zero-sum game, so one must establish coercivity in the minimizing control and anti-coercivity in the maximizing control, construct predictable saddle selectors, and verify an open-loop saddle point. The most closely related study is the work of Zhang and Xu \cite{MR4837915}. Relative to \cite{MR4837915}, our dynamics contain a controlled Poisson jump term rather than regime switching alone. The
generators of the associated indefinite extended stochastic Riccati equations with jumps depend jointly on the two pre-jump Riccati variables and on the post-jump quantities
$
P_k(t-)+\Gamma_k(t,z), t\in[0,T],\  k=1,2.
$
Jumps may move the scalar state across zero, so the positive and negative state regions cannot be treated independently. Moreover, while \cite{MR4837915} derives closed-loop feedback control-strategy pairs, our objective is to identify the unique open-loop saddle point and then prove that it admits a feedback-form representation.

The main contributions of this paper are as follows. First, under a uniform convexity-concavity condition (UCC), we establish unique open-loop solvability and characterize the saddle point by a FBSDEJ together with cone-type variational inequalities. Second, assuming that the associated indefinite extended stochastic Riccati equations with jumps admit positive bounded solutions, we construct predictable minimax selectors, prove well-posedness and admissibility of the resulting closed-loop state-control pair, and verify that the feedback-form candidate coincides with the unique open-loop saddle point. The value is then expressed in terms of the initial Riccati variables. Third, under additional structural conditions, we prove the existence of positive bounded solutions to the coupled Riccati system. The proof combines a double truncation, a  BSDEJ comparison theorem, uniform pathwise and post-jump barriers, and an exponential stability argument for the two successive monotone limits.

The remainder of the paper is organized as follows. Section 2 introduces the notation and standing assumptions. Section 3 establishes open-loop solvability and the FBSDEJ characterization. Section 4 derives and verifies the feedback representation. Section 5 proves the solvability of the associated indefinite extended stochastic Riccati equations with jumps.

\section{Preliminaries}\label{sec1}

For $x\in\mathbb R$, set $
x^+:=\max\{x,0\}, x^-:=\max\{-x,0\}.
$
The superscript $\top$ denotes transpose, and $|\cdot|$
denotes the Euclidean norm for vectors and the Frobenius norm for matrices. Let $\mathbb S^n$ be the space of real symmetric $n\times n$ matrices and let $\mathbb S_+^n$ be the positive semidefinite matrices.  The identity matrix of order $n$ is denoted by $I_n$.

Let $\mathbb H$ be a finite-dimensional Euclidean space. We
use the following spaces:
\begin{align*}
L_{\mathcal F_T}^{\infty}(\Omega;\mathbb H)
:={}&
\left\{
\xi:\Omega\to\mathbb H:
\xi\text{ is }\mathcal F_T\text{-measurable and }
\operatorname*{ess\,sup}_{\omega\in\Omega}|\xi(\omega)|
<\infty
\right\},
\\
L_{\mathbb F}^{2}(0,T;\mathbb H)
:={}&
\left\{
\phi:
\phi\text{ is predictable and }
\mathbb E\int_0^T|\phi(t)|^2dt<\infty
\right\},
\\
L_{\mathbb F}^{\infty}(0,T;\mathbb H)
:={}&
\left\{
\phi:
\phi\text{ is predictable and }
\operatorname*{ess\,sup}_{(\omega,t)}
|\phi(\omega,t)|<\infty
\right\},
\\
\mathcal L_{\mathcal P}^{2,\nu}(0,T;\mathbb H)
:={}&
\left\{
\phi:
\phi\text{ is }
\mathcal P\otimes\mathcal B(U_0)\text{-measurable and }
\mathbb E\int_0^T\int_{U_0}
|\phi(t,z)|^2\nu(dz)dt<\infty
\right\},
\\
\mathcal L_{\mathcal P}^{\infty,\nu}(0,T;\mathbb H)
:={}&
\left\{
\phi:
\phi\text{ is }
\mathcal P\otimes\mathcal B(U_0)\text{-measurable and }
\operatorname*{ess\,sup}_{(\omega,t,z)}
|\phi(\omega,t,z)|<\infty
\right\},
\\
\mathcal S_{\mathbb F}^{2}(0,T;\mathbb H)
:={}&
\left\{
\phi:
\phi\text{ is càdlàg and adapted, and }
\mathbb E\left[
\sup_{0\le t\le T}|\phi(t)|^2
\right]<\infty
\right\}.
\end{align*}
The essential suprema in
$L_{\mathbb F}^{\infty}$ and
$\mathcal L_{\mathcal P}^{\infty,\nu}$ are taken with respect
to $d\mathbb P\otimes dt$ and
$d\mathbb P\otimes dt\otimes\nu(dz)$, respectively.

We impose the following standing boundedness conditions.

\begin{assume}\label{A1}
\begin{enumerate}
    \item [(A1)] $A(\cdot), C(\cdot) \in L_{\FF}^\infty(0,T; \RR), B_k(\cdot), D_k (\cdot)\in L_{\FF}^\infty(0,T; \RR^{m_k}) $, $E(\cdot, \cdot)\in\cL_{\cP}^{\infty, \nu}(0,T; \RR)$ and $F_k(\cdot, \cdot)\in\cL_{\cP}^{\infty,\nu}(0,T; \RR^{m_k}), k=1,2$. 
    \item [(A2)] $Q(\cdot) \in L_{\FF}^\infty(0,T; \RR)$, $ R_{kk}(\cdot)\in L_{\FF}^\infty(0,T; \SS^{m_k})$,  $R_{12}(\cdot)=R_{21}^\top(\cdot) \in L_{\FF}^\infty(0,T; \RR^{m_1\times m_2})$ and  $G(T)\in L_{\cF_{T}}^\infty(\Omega; \RR), k=1,2 $.  
    
\end{enumerate}
\end{assume}
Under Assumption (A1), for any initial value  $x\in\RR$ and  control pair $(u_1, u_2)\in L_{\mathbb{F}}^2(0,T;\RR^{m_1+m_2})$ the state equation (\ref{state}) admits  a unique solution  $X\in  S^2_{\FF}(0,T; \RR)$ (See Meng \cite{Meng2014}). Furthermore,  under Assumption (A2)  the performance functional  (\ref{cost}) is well-defined. When no confusion can arise, the arguments $(\omega,t)$ are suppressed; for example, we write $A=A(t)$ and
$E(z)=E(t,z)$. Left limits are retained in the definitions
of stochastic equations and may be omitted only in
expressions integrated with respect to $dt$.

\section{Open-loop solvability}\label{sec2}
\setcounter{equation}{0}
\renewcommand{\theequation}{\thesection.\arabic{equation}}

This section establishes the unique open-loop solvability of Problem
(C-ZLQJ) under a uniform convexity--concavity condition and gives an
FBSDEJ characterization of the open-loop saddle point.

\begin{definition}\label{def:open-loop-saddle}
For an initial state $x\in\mathbb R$, a pair
$(u_1^*,u_2^*)\in\mathcal U_1\times\mathcal U_2$ is called an
\emph{open-loop saddle point} of Problem (C-ZLQJ) if
\[
J(x;u_1^*,u_2)
\le
J(x;u_1^*,u_2^*)
\le
J(x;u_1,u_2^*),
\qquad
(u_1,u_2)\in\mathcal U_1\times\mathcal U_2.
\]
The associated state process
$X^*=X_x^{u_1^*,u_2^*}$ is called the optimal state. The problem is
said to be uniquely open-loop solvable at $x$ if the open-loop saddle
point is unique, and uniquely open-loop solvable if this holds for every
$x\in\mathbb R$.
\end{definition}

For
$(\widetilde u_1,\widetilde u_2)
\in
L_{\mathbb F}^2(0,T;\mathbb R^{m_1+m_2})
$,
let $X_0^{\widetilde u_1,\widetilde u_2}$ denote the zero-initial-state
solution and define
\begin{align*}
\widetilde J(0;\widetilde u_1,\widetilde u_2)
:=\mathbb E\Bigg[
\int_0^T
\Big(
Q(t)|X_0^{\widetilde u_1,\widetilde u_2}|^2
+\widetilde u_1^\top R_{11}\widetilde u_1
+2\widetilde u_1^\top R_{12}\widetilde u_2
+\widetilde u_2^\top R_{22}\widetilde u_2
\Big)dt
+G(T)|X_0^{\widetilde u_1,\widetilde u_2}(T)|^2
\Bigg].
\end{align*}

\begin{assume}\label{A2}
\textnormal{(UCC)} There exists a constant $\delta>0$ such that
\begin{equation}\label{A21}
\left\{
\begin{aligned}
\widetilde J(0;\widetilde u_1,0)
&\ge
\delta\,
\mathbb E\int_0^T|\widetilde u_1(t)|^2dt,
&&
\forall \widetilde u_1\in L_{\mathbb F}^2(0,T;\mathbb R^{m_1}),
\\
\widetilde J(0;0,\widetilde u_2)
&\le
-\delta\,
\mathbb E\int_0^T|\widetilde u_2(t)|^2dt,
&&
\forall\widetilde u_2\in L_{\mathbb F}^2(0,T;\mathbb R^{m_2}).
\end{aligned}
\right.
\end{equation}
\end{assume}

\begin{theorem}\label{26081}
  Let Assumptions \ref{A1} and   \ref{A2} hold. Then Problem (C-ZLQJ) is uniquely open-loop solvable for every initial value $x\in\RR$.
\end{theorem}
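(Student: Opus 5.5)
We prove existence by recasting Problem (C-ZLQJ) as a convex--concave saddle problem on the closed convex sets $\mathcal U_1\subset L_{\mathbb F}^2(0,T;\mathbb R^{m_1})$ and $\mathcal U_2\subset L_{\mathbb F}^2(0,T;\mathbb R^{m_2})$, and we prove uniqueness with a strong convexity--concavity inequality. First, linearity of \eqref{state} gives $X_x^{u_1,u_2}=X_x^{0,0}+X_0^{u_1,u_2}$, and $X_0^{u_1,u_2}$ depends linearly and continuously on $(u_1,u_2)$ in $L^2$ by the standard $\mathcal S^2_{\mathbb F}$-estimate under (A1). Substituting into \eqref{cost}, $J(x;u_1,u_2)$ becomes a continuous quadratic functional on $L_{\mathbb F}^2(0,T;\mathbb R^{m_1+m_2})$:
\[
J(x;u_1,u_2)=\widetilde J(0;u_1,u_2)+2\mathcal L_x(u_1,u_2)+J(x;0,0),
\]
where $\mathcal L_x$ is a bounded linear functional. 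Closedness and convexity of $\mathcal U_k$ follow from the fact that $\Pi_k$ is a closed convex cone, since $L^2$-limits have an a.e.-convergent subsequence.

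The key step is to transfer (UCC) from the ``pure'' directions to the partial maps. Because $\widetilde J(0;\cdot,\cdot)$ is a quadratic form, fixing $u_2$ makes $u_1\mapsto J(x;u_1,u_2)$ a quadratic function whose second-order part is exactly $\widetilde J(0;u_1,0)$. Consequently
\[
J(x;\lambda u_1+(1-\lambda)u_1',u_2)=\lambda J(x;u_1,u_2)+(1-\lambda)J(x;u_1',u_2)-\lambda(1-\lambda)\widetilde J(0;u_1-u_1',0),
\]
which by \eqref{A21} shows that $u_1\mapsto J(x;u_1,u_2)$ is $\delta$-strongly convex, uniformly in $u_2$. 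Symmetrically, $u_2\mapsto J(x;u_1,u_2)$ is $\delta$-strongly concave, uniformly in $u_1$. In particular the functional is coercive in $u_1$ and anti-coercive in $u_2$, with $J(x;u_1,0)\ge \delta\|u_1\|^2-c(1+\|u_1\|)$, and similarly for $u_2$.

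For existence, we apply the minimax theorem for continuous convex--concave functions on closed convex subsets of Hilbert spaces with coercivity, in the form of Ekeland--Temam Prop.~VI.2.4 or Barbu--Precupanu. Its hypotheses are closedness and convexity of the sets together with coercivity of $J(\cdot,\bar u_2)$ and of $-J(\bar u_1,\cdot)$ at some point, and we take $\bar u_k=0$. The theorem then gives a saddle point $(u_1^*,u_2^*)\in\mathcal U_1\times\mathcal U_2$. If we prefer a self-contained argument, we would proceed in two stages. Strong concavity gives, for each $u_1$, a unique maximizer $\beta(u_1)\in\mathcal U_2$, and the map $\beta$ is Lipschitz. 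The function $\Phi(u_1):=\sup_{u_2}J(x;u_1,u_2)$ is then a supremum of $\delta$-strongly convex continuous functions, so it is strongly convex, lower semicontinuous and coercive, and hence has a unique minimizer $u_1^*$ on $\mathcal U_1$. The remaining work is to verify that $u_2^*=\beta(u_1^*)$ completes the saddle point. This verification is the main obstacle. We would carry it out through the first-order variational inequalities. Convexity makes the variational inequalities for $u_1\mapsto J(x;u_1,u_2^*)$ on $\mathcal U_1$ and for $u_2\mapsto J(x;u_1^*,u_2)$ on $\mathcal U_2$ sufficient. Their validity follows from a Danskin-type differentiability argument for $\Phi$, which uses that $\beta$ is single-valued and continuous.

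Uniqueness is the final step. Suppose $(u_1^*,u_2^*)$ and $(\hat u_1,\hat u_2)$ are both saddle points. Strong convexity of $J(x;\cdot,u_2^*)$, combined with minimality at $u_1^*$, gives
\[
J(x;\hat u_1,u_2^*)\ge J(x;u_1^*,u_2^*)+\delta\|\hat u_1-u_1^*\|^2 .
\]
Three further inequalities of the same type follow by swapping the roles of the two saddle points and of the two players. Summing all four, the $J$-terms cancel because both pairs are saddle points, and what remains is $2\delta(\|\hat u_1-u_1^*\|^2+\|\hat u_2-u_2^*\|^2)\le0$. Hence $\hat u_1=u_1^*$ and $\hat u_2=u_2^*$, which gives unique open-loop solvability for every $x\in\mathbb R$.
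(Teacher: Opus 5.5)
Your argument is correct, but it reaches the result by a different route than the paper.

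\textbf{The paper's route.} It works with the operator $\mathfrak F_x(u)=(\nabla_{u_1}J_x(u),-\nabla_{u_2}J_x(u))$ on $L^2_{\mathbb F}(0,T;\mathbb R^{m_1+m_2})$. After the mixed terms cancel, (UCC) gives strong monotonicity, $\langle\mathfrak F_x(u)-\mathfrak F_x(v),u-v\rangle\ge2\delta\|u-v\|^2$. Hence the projected map $u\mapsto\operatorname{Proj}_{\mathcal U}(u-\rho\mathfrak F_x(u))$ is a contraction for $0<\rho<4\delta/L^2$. Its unique fixed point solves the variational inequality $\langle\mathfrak F_x(u^*),v-u^*\rangle\ge0$, and an exact Taylor expansion shows this point is a saddle point. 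Uniqueness follows because every saddle point solves the same strongly monotone variational inequality.

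\textbf{Your route.} You obtain existence from the abstract convex--concave minimax theorem with coercivity. Your verification of uniform $\delta$-strong convexity in $u_1$, $\delta$-strong concavity in $u_2$, and coercivity at $\bar u_k=0\in\mathcal U_k$ is exactly what that theorem requires. You obtain uniqueness from the four strong-convexity/concavity inequalities, which do add up to $2\delta(\|\hat u_1-u_1^*\|^2+\|\hat u_2-u_2^*\|^2)\le0$. One small wording point: the $J$-terms cancel algebraically after summation; the saddle property is what produces the four inequalities, not the cancellation itself.

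\textbf{What each buys.} Your approach needs no gradients, no Lipschitz constants, and no cross-term cancellation. Its uniqueness step is elementary and holds for any uniformly strongly convex--concave function. The price is that existence rests on a nonconstructive minimax theorem based on weak compactness. The paper's approach is self-contained modulo Banach's fixed-point theorem and yields a geometrically convergent projection algorithm. It also produces the first-order variational inequality in exactly the form later used for the FBSDEJ characterization in Theorem \ref{THE265}.

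Your fallback two-stage argument (strongly convex $\Phi=\sup_{u_2}J$, Lipschitz best response $\beta$, Danskin-type differentiation) is also sound. It is not needed once the minimax theorem is invoked.
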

\begin{proof}
Adapting  the Hilbert-space formulation in
\cite[Assumption (A3) and Theorem 4.4]{MR4253799}, set
$
\mathbb H_i
:=
L_{\mathbb F}^2(0,T;\mathbb R^{m_i}),\ 
\mathbb H:=\mathbb H_1\times\mathbb H_2,
$
Since $\Pi_i$ is a nonempty closed convex cone,
$\mathcal U_i$ is a nonempty closed convex subset of
$\mathbb H_i$; hence $\cU :=\mathcal U_1\times\mathcal U_2$ is a nonempty closed
convex subset of $\mathbb H$.

Let $u=(u_1,u_2)$. By Assumption 2.1 and the standard estimate for the linear
state equation, the map
$
u\longmapsto X_0^u
$
is bounded and linear from $\mathbb H$ into
$\mathcal S_{\mathbb F}^2(0,T;\mathbb R)$. Since
$
X_x^{u_1,u_2}=X_x^{0,0}+X_0^{u_1,u_2},
$
the control-to-state map for fixed $x$ is continuous and affine. Consequently,
$
J_x(u):=J(x;u_1,u_2)
$
is a continuously Fr\'echet differentiable quadratic
functional on $\mathbb H$. Define
$
\mathfrak F_x(u)
:=
\big(
\nabla_{u_1}J_x(u),
-\nabla_{u_2}J_x(u)
\big).
$
Then $\mathfrak F_x$ is Lipschitz continuous: for some
constant $L>0$,
\[
\|\mathfrak F_x(u)-\mathfrak F_x(v)\|_{\mathbb H}
\le
L\|u-v\|_{\mathbb H},
\qquad u,v\in\mathbb H.
\]

By polarization of the quadratic functional and the
symmetry of its associated bilinear form,
\[
\begin{aligned}
\left\langle
\mathfrak F_x(u)-\mathfrak F_x(v),
u-v
\right\rangle_{\mathbb H}
=
2\widetilde J
\big(
0;u_1-v_1,0
\big)
-
2\widetilde J
\big(
0;0,u_2-v_2
\big).
\end{aligned}
\]
Hence Assumption \ref{A2} yields
\begin{equation}\label{eq:strong-monotonicity-open-loop}
\left\langle
\mathfrak F_x(u)-\mathfrak F_x(v),
u-v
\right\rangle_{\mathbb H}
\ge
2\delta\|u-v\|_{\mathbb H}^2,
\qquad u,v\in\mathbb H.
\end{equation}
Thus $\mathfrak F_x$ is strongly monotone.

Let $\operatorname{Proj}_{\cU}$ denote the metric
projection onto $\cU$. For
$0<\rho<4\delta/L^2$, define
$
\mathcal T_\rho(u)
:=
\operatorname{Proj}_{\cU}
\big(
u-\rho\mathfrak F_x(u)
\big).
$
Using the nonexpansiveness of the metric projection,
\eqref{eq:strong-monotonicity-open-loop}, and the Lipschitz
continuity of $\mathfrak F_x$, we obtain
\[
\begin{aligned}
\|\mathcal T_\rho(u)-\mathcal T_\rho(v)\|_{\mathbb H}^2
&\le
\big(
1-4\rho\delta+\rho^2L^2
\big)
\|u-v\|_{\mathbb H}^2.
\end{aligned}
\]
Moreover, by the strong monotonicity and Lipschitz
continuity of $\mathfrak F_x$, for $u\ne v$,
\[
\begin{aligned}
2\delta\|u-v\|_{\mathbb H}^2
&\le
\left\langle
\mathfrak F_x(u)-\mathfrak F_x(v),
u-v
\right\rangle_{\mathbb H}
\le
\|\mathfrak F_x(u)-\mathfrak F_x(v)\|_{\mathbb H}
\|u-v\|_{\mathbb H}
\le
L\|u-v\|_{\mathbb H}^2.
\end{aligned}
\]
Hence $2\delta\le L$. Therefore,
$
1-4\rho\delta+\rho^2L^2
=
(1-2\rho\delta)^2
+\rho^2(L^2-4\delta^2)
\ge0,
$
whereas
$
1-4\rho\delta+\rho^2L^2<1
$
whenever $0<\rho<4\delta/L^2$. Thus,
$
q_\rho
:=
\sqrt{1-4\rho\delta+\rho^2L^2}
\in[0,1),
$
and $\mathcal T_\rho$ is a contraction with contraction constant $q_\rho$.  It admits a unique fixed point
$
u^*=(u_1^*,u_2^*)\in\cU.
$
The characterization of the metric projection implies
\begin{equation}\label{eq:open-loop-product-VI}
\left\langle
\mathfrak F_x(u^*),
v-u^*
\right\rangle_{\mathbb H}
\ge0,
\qquad v\in\cU.
\end{equation}
Equivalently,
\begin{align*}
\left\langle
\nabla_{u_1}J_x(u^*),
v_1-u_1^*
\right\rangle_{\mathbb H_1}
&\ge0,
&&v_1\in\mathcal U_1,
\\
\left\langle
\nabla_{u_2}J_x(u^*),
v_2-u_2^*
\right\rangle_{\mathbb H_2}
&\le0,
&&v_2\in\mathcal U_2.
\end{align*}

Since $J_x$ is quadratic, its exact Taylor expansion and
Assumption \ref{A2} give, for every $v_1\in\mathcal U_1$,
\[
\begin{aligned}
J(x;v_1,u_2^*)-J(x;u_1^*,u_2^*)
={}&
\left\langle
\nabla_{u_1}J_x(u^*),
v_1-u_1^*
\right\rangle_{\mathbb H_1}
+
\widetilde J
\big(
0;v_1-u_1^*,0
\big)
\ge{}
\delta\|v_1-u_1^*\|_{\mathbb H_1}^2,
\end{aligned}
\]
whereas, for every $v_2\in\mathcal U_2$,
\[
\begin{aligned}
J(x;u_1^*,v_2)-J(x;u_1^*,u_2^*)
={}&
\left\langle
\nabla_{u_2}J_x(u^*),
v_2-u_2^*
\right\rangle_{\mathbb H_2}
+
\widetilde J
\big(
0;0,v_2-u_2^*
\big)
\le{}
-\delta\|v_2-u_2^*\|_{\mathbb H_2}^2.
\end{aligned}
\]
Consequently,
\[
J(x;u_1^*,v_2)
\le
J(x;u_1^*,u_2^*)
\le
J(x;v_1,u_2^*),
\qquad
(v_1,v_2)\in\mathcal U_1\times\mathcal U_2.
\]
Thus $u^*$ is an open-loop saddle point. Moreover, every
open-loop saddle point satisfies
\eqref{eq:open-loop-product-VI}; the uniqueness of the
solution to that variational inequality therefore implies
the uniqueness of the saddle point. Since $x\in\mathbb R$
was arbitrary, Problem (C-ZLQJ) is uniquely open-loop
solvable for every initial value.
\end{proof}

For convenience,  we introduce the following notations ( $
 \forall (t,z)\in[0,T]\times U_0$)
 \begin{eqnarray}\label{Nota1}
     B&=&( B_1^\top, \  B_2^\top)^\top, \   D=( D_1^\top, \  D_2^\top)^\top, \ F(z)=( F_1^\top(z), \  F_2^\top(z))^\top,\ u= ( u_1^\top, u_2^\top)^\top\nonumber\\
       R&=&\begin{pmatrix}
R_{11} & R_{12}\\
R_{21}  & R_{22} 
\end{pmatrix}.
 \end{eqnarray}
The next theorem characterizes the open-loop saddle point by a coupled
FBSDEJ and pointwise cone-type variational inequalities.

\begin{theorem}\label{THE265}
Let Assumptions 2.1 and 3.1 hold, and let $x\in\mathbb R$.
A control pair $u^*=(u_1^*,u_2^*)\in\mathcal U_1\times\mathcal U_2$
is the open-loop saddle point of Problem (C-ZLQJ) if and only if there exists
an adapted solution
$$
(X^*,Y^*,Z^*,\cK^*)\in
\mathcal S_{\mathbb F}^2(0,T;\mathbb R)
\times\mathcal S_{\mathbb F}^2(0,T;\mathbb R)
\times L_{\mathbb F}^2(0,T;\mathbb R)
\times L_{\mathcal P}^{2,\nu}(0,T;\mathbb R)
$$
to the FBSDEJs
\begin{equation}\label{FBSDE0701}
\left\{\begin{array}{ccl}
dX^*(t)&=&
\big(A X^*+B^\top u^*\big)dt
+\big(CX^*+D^\top u^*\big)dW(t)+\int_{U_0}\big(EX^*+F^\top u^*
\big)\widetilde N(dt,dz),\\
dY^*(t)&=&
-\Big(
AY^*+CZ^*
+\int_{U_0}E(z)\cK^*(z)\nu(dz)
+QX^*
\Big)dt+Z^*(t)dW(t)\\
&&+\int_{U_0}\cK^*(t,z)\widetilde N(dt,dz),\\
X^*(0)&=&\,x,\qquad
Y^*(T)=G(T)X^*(T),
\end{array}\right.\end{equation}
such that, for $dt\otimes d\mathbb P$-a.e.$(t,\omega)$,
\begin{equation}\label{eq07011}
\Big<B_1Y^*+D_1Z^*+R_{11}u_1^*+R_{12}u_2^*
+\int_{U_0}F_1(z)\cK^*(z)\nu(dz),
u_1-u_1^*\Big>
\ge0,
\quad \forall u_1\in\Pi_1
\end{equation}
and
\begin{equation}\label{eq07012}
  \Big<B_2Y^*+D_2Z^*+R_{21}u_1^*+R_{22}u_2^*
+\int_{U_0}F_2(z)\cK^*(z)\nu(dz),
u_2-u_2^*\Big>
\le0,
\quad \forall u_2\in\Pi_2.
\end{equation}
\end{theorem}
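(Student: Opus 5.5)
The plan is to identify the Fr\'echet gradients $\nabla_{u_k}J_x(u^*)$ from the proof of Theorem \ref{26081} by a duality argument, and then to localize the Hilbert-space variational inequalities obtained there to pointwise cone inequalities.

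\emph{Step 1 (adjoint and gradient).} Given $u^*\in\mathcal U$, let $X^*=X_x^{u^*}$. The backward equation in \eqref{FBSDE0701} is then a linear BSDEJ with bounded coefficients, terminal value $G(T)X^*(T)\in L^2_{\mathcal F_T}$ and source $QX^*\in L^2_{\mathbb F}$. By standard BSDEJ theory (e.g.\ Meng \cite{Meng2014}) it has a unique solution $(Y^*,Z^*,\cK^*)$ in the stated spaces, so the FBSDEJ is always solvable once $u^*$ is fixed.

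For a direction $v=(v_1,v_2)\in\mathbb H$, let $X_0^v$ be the zero-initial state driven by $v$. Apply It\^o's formula with jumps to $Y^*X_0^v$ on $[0,T]$ and take expectations; the stochastic integrals are true martingales by the square-integrability estimates. This gives
\[
\mathbb E\big[G(T)X^*(T)X_0^v(T)\big]+\mathbb E\int_0^TQX^*X_0^v\,dt
=\mathbb E\int_0^T\Big\langle BY^*+DZ^*+\int_{U_0}F(z)\cK^*(z)\nu(dz),\,v\Big\rangle dt .
\]
The covariation terms $Z^*(D^\top v+CX_0^v)$ and $\int\cK^*(F^\top v+EX_0^v)\nu(dz)$ cancel against the $CZ^*$ and $\int E\cK^*\nu$ terms in the drift of $Y^*$. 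Since $J_x$ is quadratic,
\[
\tfrac12\frac{d}{d\varepsilon}J_x(u^*+\varepsilon v)\big|_{\varepsilon=0}
=\mathbb E\Big[G X^*(T)X_0^v(T)+\int_0^T\big(QX^*X_0^v+\langle Ru^*,v\rangle\big)dt\Big].
\]
It follows that, up to the factor $2$, $\nabla_{u_k}J_x(u^*)=\Lambda_k$, where $\Lambda_k:=B_kY^*+D_kZ^*+R_{k1}u_1^*+R_{k2}u_2^*+\int_{U_0}F_k(z)\cK^*(z)\nu(dz)$.

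\emph{Step 2 (equivalence with integrated VIs).} The proof of Theorem \ref{26081} shows that $u^*$ is a saddle point iff \eqref{eq:open-loop-product-VI} holds. Sufficiency there came from the exact quadratic expansion together with Assumption \ref{A2}; necessity holds because, for a saddle point, $\varepsilon\mapsto J_x(u_1^*+\varepsilon(v_1-u_1^*),u_2^*)$ is minimized at $\varepsilon=0$ over $[0,1]$ by convexity of $\mathcal U_1$, and similarly for $u_2$. So it suffices to show that the integrated inequalities
\[
\mathbb E\int_0^T\langle\Lambda_1,v_1-u_1^*\rangle dt\ge0\ \ \forall v_1\in\mathcal U_1,
\qquad
\mathbb E\int_0^T\langle\Lambda_2,v_2-u_2^*\rangle dt\le0\ \ \forall v_2\in\mathcal U_2,
\]
are equivalent to the pointwise inequalities \eqref{eq07011}--\eqref{eq07012}.

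\emph{Step 3 (localization, the main obstacle).} The direction from pointwise to integrated is immediate: integrate, using $\Lambda_k\in L^2_{\mathbb F}$.

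For the converse, I would work on player 1; player 2 is symmetric. Fix a countable dense subset $\{w_j\}$ of $\Pi_1$. For each $j$ set $S_j:=\{(t,\omega):\langle\Lambda_1,w_j-u_1^*\rangle<0\}$, which is predictable because $\Lambda_1$ and $u_1^*$ are predictable. Take $v_1:=w_j\mathbf 1_{S_j}+u_1^*\mathbf 1_{S_j^c}$. Then $v_1$ is predictable, square integrable and $\Pi_1$-valued, so $v_1\in\mathcal U_1$. Plugging it into the integrated inequality gives
\[
\mathbb E\int_0^T\mathbf 1_{S_j}\langle\Lambda_1,w_j-u_1^*\rangle dt\ge0,
\]
which forces $S_j$ to be $dt\otimes d\mathbb P$-null. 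Taking the union over $j$ and using density plus continuity in $u_1$ yields \eqref{eq07011} a.e.

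The delicate points are the predictability of the spliced control, which is needed so that it is admissible, and the justification that the It\^o-product local martingales are true martingales. For the latter I would use the Burkholder--Davis--Gundy inequality with $X^*,Y^*\in\mathcal S^2_{\mathbb F}$.

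Finally, uniqueness of $(Y^*,Z^*,\cK^*)$ for given $u^*$, combined with Theorem \ref{26081}, gives the ``if and only if'' statement.
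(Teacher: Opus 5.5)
Your proof is correct. Sufficiency is the same as in the paper, but necessity takes a different route.

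\textbf{Sufficiency.} You argue as the paper does: It\^o's formula for $Y^*\delta X_1$, the exact quadratic expansion of $J$, and Assumption \ref{A2} combined with the integrated form of \eqref{eq07011}--\eqref{eq07012}.

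\textbf{Necessity.} The paper fixes the opponent's control and invokes the Tang--Li stochastic maximum principle for jump systems. It then reads the pointwise inequalities off $\partial H/\partial u_k$. You instead work from first principles, in three steps.
\begin{enumerate}
\item The duality identity identifies $\tfrac12\nabla J_x(u^*)$ with $(\Lambda_1,\Lambda_2)$.
\item One-sided derivatives along convex segments give the Hilbert-space inequality \eqref{eq:open-loop-product-VI}.
\item The spliced controls $w_j\mathbf 1_{S_j}+u_1^*\mathbf 1_{S_j^c}$ localize it to $dt\otimes d\mathbb P$-a.e.\ pointwise inequalities.
\end{enumerate}

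\textbf{What each route buys.} Your route is self-contained. It also makes explicit that the FBSDEJ conditions of Theorem \ref{THE265} are exactly the pointwise form of the abstract variational inequality $\langle\mathfrak F_x(u^*),v-u^*\rangle\ge0$ from Theorem \ref{26081}. It avoids checking that a general maximum principle applies in this cone-constrained, random-coefficient setting with control in the diffusion and jump terms, and that it reduces to the local inequality. Such principles are typically built on spike variations and second-order adjoints. The paper's route is shorter but rests entirely on that external result.

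\textbf{One small fix.} $Y^*$ is only c\`adl\`ag and adapted, so define $\Lambda_1$, and hence $S_j$, with $Y^*(t-)$ in place of $Y^*(t)$. The two versions agree $dt\otimes d\mathbb P$-a.e. This makes $S_j$ genuinely predictable, so the spliced control lies in $\mathcal U_1$.
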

\begin{proof}
We first prove necessity. Suppose that $u^*=(u_1^*,u_2^*)$ is the
open-loop saddle point and let $X^*=X_x^{u_1^*,u_2^*}$.
Define the Hamiltonian function 
\begin{eqnarray*}
H(t,x,y,z, k, u_1,u_2)&=&\<y, Ax+B^\top u\>+\<z, Cx+D^\top u\>+\int_{U_0}\<k, Ex+F^\top u\>\nu(dz)\nonumber\\
&&+\frac{1}{2}\big(\<Qx,x\>+\<Ru,u\>\big).
\end{eqnarray*}
 For fixed $u_2^*$, the control $u_1^*$ minimizes $J(x;u_1,u_2^*)$ over $\mathcal U_1$. Applying the class  stochastic maximum principle  (see Tang and Li \cite{TL1994}) for cone-constrained stochastic
linear-quadratic control problems with jumps gives
\begin{equation*}
 \<\frac{\partial{H}}{\partial{u_1}}(t,X^*,Y^*,Z^*, \cK^*, u_1^*,u_2^*),
u_1-u_1^*\>
\ge0,  \quad \forall u_1\in\Pi_1,\ \text{a.e.} \ t\in[0,T], \ \PP\ \text{-a.s.} 
\end{equation*}
Similarly, for fixed $u_1^*$, the control $u_2^*$ maximizes
$J(x;u_1^*,u_2)$ over $\mathcal U_2$. Therefore,
\begin{equation*}
 \<\frac{\partial{H}}{\partial{u_2}}(t,X^*,Y^*,Z^*, \cK^*, u_1^*,u_2^*),
u_2-u_2^*\>\le0,
\quad \forall u_2\in\Pi_2, \ \text{a.e.} \ t\in[0,T]. , \ \PP\ \text{-a.s.}   
\end{equation*}
This is further equivalent to  (\ref{eq07011}) and (\ref{eq07012}) being satisfied.

Conversely, suppose that there exists a solution
$(X^*,Y^*,Z^*,K^*)$ to (\ref{FBSDE0701}) satisfying (\ref{eq07011}) and (\ref{eq07012}).
For any $u_1\in\mathcal U_1$, set 
$
\delta u_1=u_1-u_1^*,
\delta X_1=X_x^{u_1,u_2^*}-X^*.
$
Then $\delta X_1=X_0^{\delta u_1,0}$. Applying  Itô's formula for $Y^*(t)\delta X_1(t)$ and the quadratic structure of $J$,
\begin{eqnarray*}
J(x;u_1,u_2^*)-J(x;u_1^*,u_2^*)
&=&2\mathbb E\int_0^T
\<
B_1Y^*+D_1Z^*+R_{11}u_1^*+R_{12}u_2^*
+\int_{U_0}F_1\cK^*\nu(dz),
\delta u_1
\> dt\\
&&+ \widetilde J(0;\delta u_1,0) 
\end{eqnarray*}
 By Assumption 3.1 and
(\ref{eq07011}), the right-hand side is nonnegative. Hence
\[
J(x;u_1^*,u_2^*)\le J(x;u_1,u_2^*),
\quad \forall u_1\in\mathcal U_1.
\]

Similarly, for any $u_2\in\mathcal U_2$, we have 
\[
J(x;u_1^*,u_2)\le J(x;u_1^*,u_2^*),
\quad \forall u_2\in\mathcal U_2.
\]
Thus $u^*$  is an open-loop saddle point. The proof is complete.
\end{proof}
\begin{remark}
When $\Pi_i=\mathbb R^{m_i}$, $i=1,2$, the variational inequalities
\eqref{eq07011}--\eqref{eq07012} reduce to the usual stationarity equations. Under cone constraints, they are normal-cone inclusions and are generally implicit. In the next section, positive bounded solutions of the associated IESREJs are used to construct a feedback-form candidate, which is then verified by Theorem \ref{THE265}.
\end{remark}

\section{Stochastic Riccati equations and open-loop saddle points}
\label{sec3}
\setcounter{equation}{0}
\renewcommand{\theequation}{\thesection.\arabic{equation}}

The purpose of this section is to derive a feedback-form representation of the
unique open-loop saddle point. Throughout the section, whenever a c\`adl\`ag
process appears in a BSDEJ generator, it is evaluated at its left limit. Time
arguments are suppressed when no confusion can arise.

For $k,k'=1,2$, set
\[
M_{kk'}(t):=R_{kk}(t)+P_{k'}(t-)D_k(t)D_k(t)^\top
\]
and
\[
N_{kk'}(t):=
P_{k'}(t-)B_k(t)^\top
+P_{k'}(t-)D_k(t)^\top C(t)
+D_k(t)^\top\Lambda_{k'}(t).
\]
When the Hamiltonians are evaluated at generic arguments, the quantities
$M_{kk'}$ and $N_{kk'}$ are understood with the corresponding values of
$P_{k'}$ and $\Lambda_{k'}$ substituted in these formulas.
Let $(P_k,\Lambda_k,\Gamma_k)_{k=1,2}$ satisfy the IESREJs
\begin{equation}\label{csre1}
\left\{
\begin{aligned}
dP_1(t)
={}&-
\Big\{
(2A(t)+C(t)^2)P_1(t-)+2C(t)\Lambda_1(t)+Q(t)
\\
&\qquad
+H_1^*
\big(
\omega,t,P_1(t-),P_2(t-),\Lambda_1(t),
\Gamma_1(t,\cdot),\Gamma_2(t,\cdot)
\big)
\Big\}dt\\
&\qquad+\Lambda_1(t)dW(t)
+\int_{U_0}\Gamma_1(t,z)\widetilde N(dt,dz),
\\
dP_2(t)
={}&-
\Big\{
(2A(t)+C(t)^2)P_2(t-)+2C(t)\Lambda_2(t)+Q(t)
\\
&\qquad
+H_2^*
\big(
\omega,t,P_1(t-),P_2(t-),\Lambda_2(t),
\Gamma_1(t,\cdot),\Gamma_2(t,\cdot)
\big)
\Big\}dt\\
&\qquad+\Lambda_2(t)dW(t)
+\int_{U_0}\Gamma_2(t,z)\widetilde N(dt,dz),
\\
P_1(T)={}&G(T),\qquad P_2(T)=G(T).
\end{aligned}
\right.
\end{equation}

Define
\begin{align*}
&\underline H_1
(\omega,t,v_1,v_2,P_1,P_2,\Lambda_1,\Gamma_1,\Gamma_2)
\notag\\
&:=v_1^\top M_{11}v_1+2N_{11}v_1
+2v_1^\top(P_1D_1D_2^\top+R_{12})v_2
\notag\\
&\quad+
\int_{U_0}
\Big[
(P_1+\Gamma_1(z))
\big(
((1+E(z)+F_1(z)^\top v_1+F_2(z)^\top v_2)^+)^2-1
\big)
\notag\\
&\hspace{32mm}
-2P_1(E(z)+F_1(z)^\top v_1+F_2(z)^\top v_2)
\notag\\
&\hspace{32mm}
+(P_2+\Gamma_2(z))
((1+E(z)+F_1(z)^\top v_1+F_2(z)^\top v_2)^-)^2
\Big]\nu(dz),
\\
&\underline H_2
(\omega,t,v_1,v_2,P_1,P_2,\Lambda_2,\Gamma_1,\Gamma_2)
\notag\\
&:=v_1^\top M_{12}v_1-2N_{12}v_1
+2v_1^\top(P_2D_1D_2^\top+R_{12})v_2
\notag\\
&\quad+
\int_{U_0}
\Big[
(P_2+\Gamma_2(z))
\big(
((-1-E(z)+F_1(z)^\top v_1+F_2(z)^\top v_2)^-)^2-1
\big)
\notag\\
&\hspace{32mm}
+2P_2(-E(z)+F_1(z)^\top v_1+F_2(z)^\top v_2)
\notag\\
&\hspace{32mm}
+(P_1+\Gamma_1(z))
((-1-E(z)+F_1(z)^\top v_1+F_2(z)^\top v_2)^+)^2
\Big]\nu(dz),
\end{align*}
and
\begin{align*}
\bar H_1(\omega,t,v_2,P_1,\Lambda_1)
&:=v_2^\top M_{21}v_2+2N_{21}v_2,
\\
\bar H_2(\omega,t,v_2,P_2,\Lambda_2)
&:=v_2^\top M_{22}v_2-2N_{22}v_2.
\end{align*}
For $k=1,2$, set $H_k:=\underline H_k+\bar H_k$ and
\[
\underline H_k^*(\omega,t, v_2,P_1,P_2,\Lambda_k,\Gamma_1,\Gamma_2)
:=\inf_{v_1\in\Pi_1}
\underline H_k(\omega,t,v_1,v_2,P_1,P_2,\Lambda_k,\Gamma_1,\Gamma_2),
\]
\begin{equation*}\label{120331}
H_k^*(\omega,t,P_1,P_2,\Lambda_k,\Gamma_1,\Gamma_2)
:=
\sup_{v_2\in\Pi_2}\inf_{v_1\in\Pi_1}
H_k(\omega,t,v_1,v_2,P_1,P_2,\Lambda_k,\Gamma_1,\Gamma_2).
\end{equation*}

\begin{remark}\label{06161}
Suppose that \eqref{csre1} admits a c\`adl\`ag solution such that
\[
\mathbb P\big(
0<P_k(t)\le K\text{ for all }t\in[0,T]
\big)=1,
\qquad k=1,2.
\]
Then
\begin{equation}\label{postjump-bounds-section4}
0<P_k(t-)+\Gamma_k(t,z)\le K,
\qquad k=1,2,
\end{equation}
$d\mathbb P\otimes dt\otimes\nu(dz)$-a.e. Indeed, if
$
A_k:=\{(\omega,t,z):P_k(t-)+\Gamma_k(t,z)\notin(0,K]\},
$
then the compensation formula and
$P_k(T_j)=P_k(T_j-)+\Gamma_k(T_j,Z_j)$ give
\[
\mathbb E\int_0^T\int_{U_0}\mathbf 1_{A_k}\nu(dz)dt
=
\mathbb E\sum_{T_j\le T}
\mathbf 1_{\{P_k(T_j)\notin(0,K]\}}=0.
\]
Since $0\le P_k(t-)\le K$, it also follows that
$|\Gamma_k(t,z)|\le K$ a.e.
\end{remark}

By enlarging a generic constant $\bar c>0$ if necessary, we may assume
\begin{equation}\label{26051}
\begin{aligned}
&|2A+C^2|\le\bar c,
\qquad
\int_{U_0}E(z)^2\nu(dz)\le\bar c,
\qquad
|B_k+CD_k|^2\le\bar c,
\\
&D_kD_k^\top+
\int_{U_0}F_k(z)F_k(z)^\top\nu(dz)
\preceq\bar c I_{m_k},
\qquad
|Q|\le\bar c,
\qquad
|G|\le\bar c,
\end{aligned}
\end{equation}
for $k=1,2$. The standard BDG--Gronwall estimate then gives, for every
stopping time $\tau\le T$ and every predictable $v_2$ satisfying
$\mathbb E\int_0^\tau|v_2(t)|^2dt<\infty$,
\begin{equation}\label{player2-state-cost-estimate}
\mathbb E\left[
\int_0^\tau Q(t)|X_x^{0,v_2}(t)|^2dt
+K|X_x^{0,v_2}(\tau)|^2
\right]
\le
K\bar c\left(
|x|^2+
\mathbb E\int_0^\tau|v_2(t)|^2dt
\right).
\end{equation}

\begin{assume}\label{A33}
There exist constants $\underline\delta>0$, $\bar\delta>0$, and
$K\ge1$ such that
\[
R_{11}\succeq\underline\delta I_{m_1},
\qquad
R_{22}\preceq-(\bar\delta+K\bar c)I_{m_2},
\qquad
Q\ge0,
\qquad
\underline\delta\le G\le\bar c\le K,
\]
and
\[
D_kD_k^\top+
\int_{U_0}F_k(z)F_k(z)^\top\nu(dz)
\succeq\underline\delta I_{m_k},
\qquad k=1,2.
\]
\end{assume}

Assumption \ref{A33} implies the UCC condition. Indeed,
$
\widetilde J(0;\widetilde u_1,0)
\ge
\underline\delta
\mathbb E\int_0^T|\widetilde u_1(t)|^2dt.
$
Moreover, by \eqref{player2-state-cost-estimate}, with $x=0$ and
$\tau=T$, we have
$ 
\widetilde J(0;0,\widetilde u_2)
\le
-\bar\delta
\mathbb E\int_0^T|\widetilde u_2(t)|^2dt.
$
Thus Assumption 3.1 holds.

Suppose now that \eqref{csre1} admits a solution satisfying the pathwise
bounds in Remark \ref{06161}. Then Assumption \ref{A33} and
\eqref{postjump-bounds-section4} imply, for $k=1,2$,
\begin{equation}\label{2301}
M_{1k}(t)
+
\int_{U_0}
\Big(
(P_1(t-)+\Gamma_1(t,z))
\wedge
(P_2(t-)+\Gamma_2(t,z))
\Big)
F_1(t,z)F_1(t,z)^\top\nu(dz)
\succeq\eta I_{m_1},
\end{equation}
and
\begin{equation}\label{230181}
M_{2k}(t)
+
\int_{U_0}
F_2(t,z)
\Big(
(P_1(t-)+\Gamma_1(t,z))
\vee
(P_2(t-)+\Gamma_2(t,z))
\Big)
F_2(t,z)^\top\nu(dz)
\preceq-\eta I_{m_2},
\end{equation}
where $\eta:=\underline\delta\wedge\bar\delta$.

\begin{proposition}\label{prop:hamiltonian-saddle}
Let Assumptions \ref{A1} and \ref{A33} hold, and suppose
that \eqref{csre1} admits a solution satisfying the
pathwise bounds in Remark \ref{06161}. Then, for
$d\mathbb P\otimes dt$-a.e. $(\omega,t)$, the Hamiltonians
$H_1$ and $H_2$ admit unique saddle points on
$\Pi_1\times\Pi_2$, denoted respectively by
\[
\Theta^+
\big(
\omega,t,P_1,P_2,\Lambda_1,\Gamma_1,\Gamma_2
\big)
=
(\Theta_1^+,\Theta_2^+)
\in\Pi_1\times\Pi_2
\]
and
\[
\Theta^-
\big(
\omega,t,P_1,P_2,\Lambda_2,\Gamma_1,\Gamma_2
\big)
=
(\Theta_1^-,\Theta_2^-)
\in\Pi_1\times\Pi_2.
\]
These saddle-point selectors admit predictable versions. Moreover, for some deterministic constant $C>0$,
\begin{equation}\label{Hamiltonian-two-sided-bound}
\left|
H_k^*
(\omega,t,P_1,P_2,\Lambda_k,\Gamma_1,\Gamma_2)
\right|
\le
C\bigl(1+|\Lambda_k|^2\bigr),
\qquad k=1,2,
\end{equation}
and
\begin{equation}\label{1203}
|\Theta^+|
\le
C(1+|\Lambda_1|),
\qquad
|\Theta^-|
\le
C(1+|\Lambda_2|).
\end{equation}
\end{proposition}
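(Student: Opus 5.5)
Fix $(\omega,t)$ outside a null set, where the coefficient bounds of Assumption \ref{A1}, the bounds of Assumption \ref{A33}, and the post-jump bounds \eqref{postjump-bounds-section4} all hold. Then $H_k$ is a deterministic function of $(v_1,v_2)$ on $\Pi_1\times\Pi_2$. We want to show it is strongly convex in $v_1$, strongly concave in $v_2$, and continuous. A unique saddle point then exists by a standard minimax argument for strongly convex--concave functions on closed convex sets; coercivity replaces compactness. The main tool is to write the jump integrand as a function of the scalar $y:=1+E(z)+F_1(z)^\top v_1+F_2(z)^\top v_2$, taking $k=1$ (the case $k=2$ is symmetric with $y\mapsto -y$). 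The integrand is
\[
\phi(y)=a\,(y^+)^2+b\,(y^-)^2-2P_1y+\text{const},\qquad a=P_1+\Gamma_1(z),\ b=P_2+\Gamma_2(z).
\]
This is a $C^1$ piecewise quadratic in $y$. Its second derivative lies in $[2(a\wedge b),\,2(a\vee b)]$, with $0<a,b\le K$.

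The key step is the convexity/concavity estimate. In $v_1$, the Hessian of $\underline H_1$ lies between $2M_{11}+2\int_{U_0}(a\wedge b)F_1F_1^\top\nu(dz)$ and the corresponding upper bound with $a\vee b$. By \eqref{2301} the lower bound is at least $2\eta I_{m_1}$; this is made rigorous via monotonicity of the gradient, since $\phi'$ is Lipschitz and monotone with slope in $[2(a\wedge b),2(a\vee b)]$. In $v_2$, $H_1=\underline H_1+\bar H_1$, so the Hessian involves $M_{21}$ and the jump term contributes at most $2\int_{U_0}(a\vee b)F_2F_2^\top\nu(dz)$. Here I read $M_{21}$ as already containing the $R_{22}$ and $D_2$-parts, and the cross term $v_1^\top(\cdot)v_2$ as not affecting partial concavity. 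By \eqref{230181} the $v_2$-Hessian is at most $-2\eta I_{m_2}$. So $H_1(\cdot,v_2)$ is $\eta$-strongly convex and $H_1(v_1,\cdot)$ is $\eta$-strongly concave, and both are continuous with quadratic growth. Existence and uniqueness of the saddle point then follow from Sion's theorem restricted to large balls, plus coercivity to remove the restriction. Alternatively, one can use the monotone-operator argument already used in Theorem \ref{26081}: the map $(\nabla_{v_1}H,-\nabla_{v_2}H)$ is strongly monotone and Lipschitz, so the projected variational inequality on $\Pi_1\times\Pi_2$ has a unique solution, and strong convexity--concavity turns it into a saddle point.

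Predictability: the saddle point is the unique fixed point of the contraction $v\mapsto \operatorname{Proj}_{\Pi_1\times\Pi_2}(v-\rho\,\mathfrak F(\omega,t,v))$. Here $\mathfrak F$ is built from predictable (respectively $\mathcal P\otimes\mathcal B(U_0)$-measurable, then $\nu$-integrated) coefficients and is continuous in $v$. The Lipschitz constant and $\eta$ are uniform, so one fixed $\rho$ works for all $(\omega,t)$. Picard iterates starting from $0$ are therefore predictable and converge pointwise, so $\Theta^\pm$ have predictable versions. A measurable-selection theorem would be an alternative, but the contraction route avoids it.

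Bounds: let $\Theta=(\Theta_1,\Theta_2)$ be the saddle point. Using $0\in\Pi_1\times\Pi_2$ in the variational inequality together with strong monotonicity gives
\[
2\eta|\Theta|^2\le\langle\mathfrak F(\Theta)-\mathfrak F(0),\Theta\rangle\le |\mathfrak F(0)|\,|\Theta|.
\]
Hence $|\Theta|\le|\mathfrak F(0)|/(2\eta)$. The quantity $\mathfrak F(0)$ consists of $2N_{k1}$, $2N_{k2}$ and jump gradients at $v=0$. These are bounded by $C(1+|\Lambda_1|)$ using \eqref{26051}, $|P_k|,|\Gamma_k|\le K$, and $\int E^2\nu<\infty$, which gives \eqref{1203}. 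For \eqref{Hamiltonian-two-sided-bound}, $H_k^*=H_k(\Theta)$. Since $H_k$ has coefficients bounded except the linear term $N$, which is linear in $\Lambda_k$, we get $|H_k(\Theta)|\le C(1+|\Theta|^2+|\Lambda_k||\Theta|)\le C(1+|\Lambda_k|^2)$. The constant term $\int(\cdots)\nu(dz)$ at $v=0$ is bounded because $\nu(U_0)<\infty$. I expect the main obstacle to be the convexity step: one must show rigorously that the piecewise-quadratic jump term, with two different post-jump weights $a\neq b$, still yields the sandwich bounds. This will come from the scalar inequality $(\phi'(y)-\phi'(y'))(y-y')\ge 2(a\wedge b)(y-y')^2$ and its upper analogue, checked case by case on the signs of $y$ and $y'$.
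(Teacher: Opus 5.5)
Your proof is correct. Its core coincides with the paper's. Both reduce the jump integrand to the scalar $C^{1,1}$ function $r\mapsto q_1(r^+)^2+q_2(r^-)^2$ plus an affine part, and sandwich its curvature between $q_1\wedge q_2$ and $q_1\vee q_2$. The paper does this via convexity of $\phi_z-\underline q\,r^2$ and $\overline q\,r^2-\phi_z$; you do it via slope bounds on $\phi'$. Both then invoke \eqref{2301}--\eqref{230181} to get strong convexity in $v_1$ and strong concavity in $v_2$.

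The routes differ after that. The paper:
\begin{itemize}
\item gets the saddle point from the Ekeland--Temam minimax theorem under coercivity;
\item proves \eqref{Hamiltonian-two-sided-bound} by comparing $\min_{v_1}\mathcal H_k(v_1,0)$ and $\max_{v_2}\mathcal H_k(0,v_2)$ with unconstrained quadratics;
\item bounds first the partial minimizer $\widehat v_{1,k}(v_2)$ and then the maximizer of $g_k$;
\item appeals to a measurable maximum theorem for predictability.
\end{itemize}
You instead move the projected-contraction argument of Theorem~\ref{26081} to the pointwise level. One contraction with a deterministic step $\rho$ gives existence, uniqueness, and an explicit predictable version via Picard iterates started at $0$. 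Testing the variational inequality at $0\in\Pi_1\times\Pi_2$ then gives the selector bound $|\Theta|\le|\mathfrak F(0)|/(2\eta)$ in one line, and \eqref{Hamiltonian-two-sided-bound} follows from the quadratic growth of $H_k$ evaluated at $\Theta$. This is shorter and avoids measurable-selection machinery.

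You should make explicit the two inputs this route rests on.
\begin{itemize}
\item Partial strong convexity and concavity give joint strong monotonicity $\langle\mathfrak F(u)-\mathfrak F(v),u-v\rangle\ge 2\eta|u-v|^2$. To see this, add the four first-order inequalities for $\mathcal H_k(\cdot,u_2)$, $\mathcal H_k(\cdot,v_2)$, $\mathcal H_k(u_1,\cdot)$ and $\mathcal H_k(v_1,\cdot)$; the function values cancel. This is preferable to a Hessian argument, since $\phi$ is only $C^{1,1}$.
\item The Lipschitz constant of $\mathfrak F$ is deterministic. This holds because $\phi'$ is $2K$-Lipschitz, $F$ is bounded and $\nu(U_0)<\infty$.
\end{itemize}

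What the paper's staged route buys is the one-sided estimates \eqref{071302}--\eqref{071303} and the partial-minimizer bound \eqref{202603202} for arbitrary $v_2$. Section~\ref{sec4} reuses these for the truncated Hamiltonians. Your variational-inequality trick also yields them if applied to $v_1\mapsto\mathcal H_k(v_1,v_2)$ alone, but as written your argument bounds only the saddle point itself.
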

\begin{proof}
For notational simplicity, set
$
\mathcal H_k(v_1,v_2)
:=
H_k
(\omega,t,v_1,v_2,P_1,P_2,\Lambda_k,\Gamma_1,\Gamma_2).$ The bounds \eqref{2301}--\eqref{230181} imply that
$\mathcal H_k(\cdot,v_2)$ is uniformly convex and coercive
on $\Pi_1$ for every $v_2\in\Pi_2$, whereas
$\mathcal H_k(v_1,\cdot)$ is uniformly concave and
anti-coercive on $\Pi_2$ for every $v_1\in\Pi_1$.

To justify the preceding assertion, set
$
q_i(z):=P_i(t-)+\Gamma_i(t,z),
i=1,2,
$
and define
$
\phi_z(r)
:=
q_1(z)(r^+)^2+q_2(z)(r^-)^2.
$
For
$
\underline q(z):=q_1(z)\wedge q_2(z),
\overline q(z):=q_1(z)\vee q_2(z),
$
the functions
$
r\longmapsto
\phi_z(r)-\underline q(z)r^2
$
and
$
r\longmapsto
\overline q(z)r^2-\phi_z(r)
$
are convex. Hence, for $a,b\in\mathbb R$ and
$\lambda\in[0,1]$,
\[
\begin{aligned}
\underline q(z)\lambda(1-\lambda)|a-b|^2
\le
\lambda\phi_z(a)
+(1-\lambda)\phi_z(b)
-\phi_z(\lambda a+(1-\lambda)b)
\le
\overline q(z)\lambda(1-\lambda)|a-b|^2.
\end{aligned}
\]

Fix $v_2\in\Pi_2$. For $x_1,y_1\in\Pi_1$, the affine
terms in $v_1$ cancel, and the preceding inequality gives
\[
\begin{aligned}
&
\lambda\mathcal H_k(x_1,v_2)
+
(1-\lambda)\mathcal H_k(y_1,v_2)
-
\mathcal H_k
\big(
\lambda x_1+(1-\lambda)y_1,v_2
\big)
\\
&\ge
\lambda(1-\lambda)
(x_1-y_1)^\top
\Bigg[
M_{1k}
+
\int_{U_0}
\underline q(z)F_1(z)F_1(z)^\top\nu(dz)
\Bigg]
(x_1-y_1)
\\
&\ge
\eta\lambda(1-\lambda)|x_1-y_1|^2.
\end{aligned}
\]
Thus, $\mathcal H_k(\cdot,v_2)$ is uniformly strongly
convex on $\Pi_1$. Similarly, for fixed $v_1\in\Pi_1$, $\mathcal H_k(v_1,\cdot)$ is uniformly strongly
concave on $\Pi_2$.

Finally, the first-order inequalities for strongly convex and strongly concave functions yield
\[
\mathcal H_k(v_1,v_2)
\ge
\eta|v_1|^2-C_{v_2}|v_1|-C_{v_2} \ \text{and} \ 
\mathcal H_k(v_1,v_2)
\le
-\eta|v_2|^2+C_{v_1}|v_2|+C_{v_1}.
\]
Here $C_{v_2}>0$ (respectively, $C_{v_1}>0$) may depend on the fixed $v_2$ (respectively, $v_1$) and the frozen parameters, but is independent of the variable $v_1$ (respectively, $v_2$). Consequently,
$\mathcal H_k(\cdot,v_2)$ is coercive on $\Pi_1$, whereas $\mathcal H_k(v_1,\cdot)$ is anti-coercive on $\Pi_2$.

Since $\Pi_1$ and $\Pi_2$ are nonempty closed convex
subsets of finite-dimensional Euclidean spaces, the
coercivity and anti-coercivity established above, together
with
\cite[Chapter VI, Proposition 2.2]
{ekeland1999convex},
imply that $\mathcal H_k$ admits a saddle point and that
\[
\min_{v_1\in\Pi_1}
\max_{v_2\in\Pi_2}
\mathcal H_k(v_1,v_2)
=
\max_{v_2\in\Pi_2}
\min_{v_1\in\Pi_1}
\mathcal H_k(v_1,v_2).
\]
The uniform convexity in $v_1$ and uniform concavity in
$v_2$ imply that this saddle point is unique; see also
\cite[Chapter VI, Proposition 1.5]
{ekeland1999convex}.

Set
\[
\ell_{1k}:=\nabla_{v_1}\mathcal H_k(0,0),
\qquad
\ell_{2k}:=\nabla_{v_2}\mathcal H_k(0,0).
\]
The boundedness of the coefficients, the finiteness of $\nu(U_0)$, and
\eqref{postjump-bounds-section4} imply
\[
|\mathcal H_k(0,0)|\le C,
\qquad
|\ell_{1k}|+|\ell_{2k}|
\le C(1+|\Lambda_k|).
\]
Since $0\in\Pi_2$, uniform convexity in $v_1$ gives
\begin{align}
H_k^*
&\ge
\min_{v_1\in\Pi_1}\mathcal H_k(v_1,0)
\ge
\inf_{v_1\in\mathbb R^{m_1}}
\left\{
\mathcal H_k(0,0)+\langle\ell_{1k},v_1\rangle
+\eta|v_1|^2
\right\}
\notag\\
&=
\mathcal H_k(0,0)-\frac{|\ell_{1k}|^2}{4\eta}
\ge-C(1+|\Lambda_k|^2).
\label{071302}
\end{align}
Similarly, since $0\in\Pi_1$, uniform concavity in $v_2$ yields
\begin{align}
H_k^*
&\le
\max_{v_2\in\Pi_2}\mathcal H_k(0,v_2)
\le
\sup_{v_2\in\mathbb R^{m_2}}
\left\{
\mathcal H_k(0,0)+\langle\ell_{2k},v_2\rangle
-\eta|v_2|^2
\right\}
\notag\\
&=
\mathcal H_k(0,0)+\frac{|\ell_{2k}|^2}{4\eta}
\le C(1+|\Lambda_k|^2).
\label{071303}
\end{align}
This proves \eqref{Hamiltonian-two-sided-bound}.

For fixed $v_2\in\Pi_2$, let $\widehat v_{1,k}(v_2)$ be the unique
minimizer of $v_1\mapsto\mathcal H_k(v_1,v_2)$ over $\Pi_1$. Since
$0\in\Pi_1$, uniform convexity gives
\begin{equation}\label{202603202}
|\widehat v_{1,k}(v_2)|
\le
C\big(1+|v_2|+|\Lambda_k|\big).
\end{equation}
Indeed,
\[
\eta|\widehat v_{1,k}(v_2)|^2
\le
|\nabla_{v_1}\mathcal H_k(0,v_2)|
|\widehat v_{1,k}(v_2)|,
\]
and the gradient on the right-hand side is bounded by
$C(1+|v_2|+|\Lambda_k|)$.

Let
\[
g_k(v_2)
:=
\min_{v_1\in\Pi_1}
\mathcal H_k(v_1,v_2).
\]
Since ${\bf 0}\in\Pi_1$, the uniform concavity in $v_2$ and
the preceding estimates yield
\[
\begin{aligned}
g_k(v_2)
\le
\mathcal H_k({\bf 0},v_2)
\le
\mathcal H_k({\bf 0},{\bf 0})
+
\langle\ell_{2k},v_2\rangle
-
\eta|v_2|^2
\le
-\frac{\eta}{2}|v_2|^2
+
C\bigl(1+|\Lambda_k|^2\bigr).
\end{aligned}
\]
If $\widehat v_{2,k}$ is a maximizer of $g_k$, then
$g_k(\widehat v_{2,k})=H_k^*$. Combining the preceding
estimate with \eqref{071302}, we obtain
$
\frac{\eta}{2}
|\widehat v_{2,k}|^2
\le
C\bigl(1+|\Lambda_k|^2\bigr).
$
Consequently, for some deterministic constant $R>0$,
\begin{equation}\label{202303203}
\operatorname*{arg\,max}_{v_2\in\Pi_2}g_k(v_2)
\subseteq
\left\{
v_2\in\Pi_2:
|v_2|\le R\bigl(1+|\Lambda_k|\bigr)
\right\}.
\end{equation}
For $k=1$, this gives $|\Theta_2^+|\le C(1+|\Lambda_1|)$, and
\eqref{202603202} then yields the same estimate for $\Theta_1^+$.
The argument for $k=2$ is identical and proves \eqref{1203}.

The predictability of $\Theta^+$ and $\Theta^-$ follows from \eqref{202603202}--\eqref{202303203} and the measurable maximum theorem for predictable Carathéodory integrands.
\end{proof}

\begin{lemma}\label{Lemma4.1}
Under the assumptions of Proposition \ref{prop:hamiltonian-saddle},
$\int_0^{\cdot}\Lambda_k(t)dW(t)$ is a BMO martingale for $k=1,2$.
\end{lemma}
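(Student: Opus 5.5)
Since $P_k$ is bounded, $0<P_k\le K$, the plan is to apply It\^o's formula to $P_k^2$ (equivalently, to $e^{\beta P_k}$) on $[\tau,T]$ for an arbitrary stopping time $\tau\le T$, take conditional expectations, and use the quadratic growth bound \eqref{Hamiltonian-two-sided-bound} of Proposition \ref{prop:hamiltonian-saddle}. The target is
\[
\mathbb E\Big[\int_\tau^T|\Lambda_k(t)|^2dt\,\Big|\,\mathcal F_\tau\Big]\le C
\]
with a deterministic $C$ independent of $\tau$. This is exactly the BMO property of $\int_0^\cdot\Lambda_k\,dW$.

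The quadratic test function $P_k^2$ does not absorb the term $H_k^*$, which can be of size $C|\Lambda_k|^2$ with an unfavorable constant. I will therefore use the exponential transform $\Phi(p)=e^{\beta p}$ with $\beta>0$ large. By It\^o's formula with jumps,
\[
d\Phi(P_k)=\Big[-\beta\Phi(P_k)\big((2A+C^2)P_k+2C\Lambda_k+Q+H_k^*\big)+\tfrac{\beta^2}{2}\Phi(P_k)|\Lambda_k|^2\Big]dt+J_k\,dt+dM_k .
\]
Here $M_k$ is a martingale (its integrands are bounded, by Remark \ref{06161} and $\nu(U_0)<\infty$), and the jump compensator is
\[
J_k=\int_{U_0}\big[\Phi(P_k+\Gamma_k)-\Phi(P_k)-\beta\Phi(P_k)\Gamma_k\big]\nu(dz),
\]
which is bounded because $|\Gamma_k|\le K$ and $0<P_k+\Gamma_k\le K$ by \eqref{postjump-bounds-section4}. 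Since $\Phi(P_k)\ge1$, the bounds $|H_k^*|\le C(1+|\Lambda_k|^2)$ and $2|C\Lambda_k|\le1+\bar c|\Lambda_k|^2$ show that, for $\beta\ge 4(C+\bar c)$, the $dt$-integrand is at least $\tfrac{\beta^2}{4}|\Lambda_k|^2-C_\beta$.

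Integrating over $[\tau,T]$ and taking conditional expectations then gives
\[
\tfrac{\beta^2}{4}\,\mathbb E\Big[\int_\tau^T|\Lambda_k|^2dt\,\Big|\,\mathcal F_\tau\Big]\le e^{\beta K}+C_\beta T .
\]
To make the martingale step rigorous I will first localize: take $\tau_n:=\inf\{t\ge\tau:\int_\tau^t|\Lambda_k|^2ds\ge n\}\wedge T$, obtain the bound on $[\tau,\tau_n]$ uniformly in $n$, and let $n\to\infty$ by monotone convergence. The jump martingale part is a true martingale because $\Gamma_k$ is bounded and $\nu$ is finite.

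The main obstacle is the sign and size of the $H_k^*$ term: the two-sided estimate \eqref{Hamiltonian-two-sided-bound} is only quadratic in $\Lambda_k$, and its constant may exceed what a quadratic test function can absorb. This is the reason for choosing an exponential test function with large $\beta$. The lower bound $P_k>0$ (so $\Phi\ge1$) together with the upper bound $P_k\le K$ (so $\Phi\le e^{\beta K}$) is precisely what makes the $\tfrac{\beta^2}{2}\Phi|\Lambda_k|^2$ term dominate.
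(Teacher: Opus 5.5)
Your proposal is correct and follows essentially the same route as the paper: It\^o's formula for $e^{\beta P_k}$ on $[\tau,T]$, the quadratic bound \eqref{Hamiltonian-two-sided-bound}, $e^{\beta P_k(t-)}\ge 1$ from positivity and $e^{\beta P_k}\le e^{\beta K}$ from the upper bound, and a large exponent so that the $\tfrac{\beta^2}{2}|\Lambda_k|^2$ term absorbs the Hamiltonian. The differences are minor. You localize with $\tau_n$ where the paper uses $\Lambda_k\in L^2_{\mathbb F}$ to get the martingale property; this is slightly more careful, since the Brownian integrand is not bounded, contrary to your parenthetical remark. You also use boundedness of the jump compensator where the paper uses its nonnegativity.
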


\begin{proof}
Let $f_k$ denote the generator of the $k$th equation in \eqref{csre1}.
By the boundedness of $P_k$, the coefficients, and
\eqref{Hamiltonian-two-sided-bound},
\[
|f_k(t)|\le C\big(1+|\Lambda_k(t)|^2\big).
\]
Fix a stopping time $\tau\le T$ and let $\alpha>0$.
Applying It\^o's formula with jumps to
$e^{\alpha P_k(t)}$ on $[\tau,T]$, we obtain
\begin{align}
e^{\alpha P_k(T)}
-
e^{\alpha P_k(\tau)}
&=
\int_\tau^T
e^{\alpha P_k(t-)}
\left[
-\alpha f_k(t)
+
\frac{\alpha^2}{2}
|\Lambda_k(t)|^2
\right]dt
\notag\\
&\quad+
\int_\tau^T
\int_{U_0}
e^{\alpha P_k(t-)}
\left[
e^{\alpha\Gamma_k(t,z)}
-
1
-
\alpha\Gamma_k(t,z)
\right]
\nu(dz)dt
\notag\\
&\quad+
M_k^W(T)-M_k^W(\tau)
+
M_k^N(T)-M_k^N(\tau),
\label{eq:BMO-exponential-Ito}
\end{align}
where
\[
M_k^W(t)
:=
\int_0^t
\alpha e^{\alpha P_k(s-)}
\Lambda_k(s)dW(s)
\]
and
\[
M_k^N(t)
:=
\int_0^t\int_{U_0}
e^{\alpha P_k(s-)}
\left(
e^{\alpha\Gamma_k(s,z)}-1
\right)
\widetilde N(ds,dz).
\]

We next verify that $M_k^W$ and $M_k^N$ are
square-integrable martingales. Since
$
0\le P_k(t-)\le K,
$
we have
\[
\mathbb E\int_0^T
\left|
\alpha e^{\alpha P_k(t-)}
\Lambda_k(t)
\right|^2dt
\le
\alpha^2e^{2\alpha K}
\mathbb E\int_0^T
|\Lambda_k(t)|^2dt
<\infty.
\]
Hence, $M_k^W$ is a square-integrable martingale.

Moreover, by \eqref{postjump-bounds-section4}, we have 
$
0<
P_k(t-)+\Gamma_k(t,z)
\le K.
$
Consequently,
\[
\begin{aligned}
&
\left|
e^{\alpha P_k(t-)}
\left(
e^{\alpha\Gamma_k(t,z)}-1
\right)
\right|
=
\left|
e^{\alpha(P_k(t-)+\Gamma_k(t,z))}
-
e^{\alpha P_k(t-)}
\right|
\le
2e^{\alpha K}.
\end{aligned}
\]
Since $\nu(U_0)<\infty$,
\[
\begin{aligned}
&\mathbb E\int_0^T\int_{U_0}
\left|
e^{\alpha P_k(t-)}
\left(
e^{\alpha\Gamma_k(t,z)}-1
\right)
\right|^2
\nu(dz)dt
\le
4e^{2\alpha K}T\nu(U_0)
<\infty.
\end{aligned}
\]
Thus, $M_k^N$ is also a square-integrable martingale.
In particular,
\[
\mathbb E\left[
\left.
M_k^W(T)-M_k^W(\tau)
\right|
\mathcal F_\tau
\right]
=
0 \ \text{and}\
\mathbb E\left[
\left.
M_k^N(T)-M_k^N(\tau)
\right|
\mathcal F_\tau
\right]
=
0.
\]

Since
$
e^{\alpha x}-1-\alpha x\ge0, x\in\mathbb R,
$
the jump-compensator term in
\eqref{eq:BMO-exponential-Ito} is nonnegative. 
Therefore,
\begin{align}
e^{\alpha P_k(T)}
-
e^{\alpha P_k(\tau)}
&\ge
\int_\tau^T
e^{\alpha P_k(t-)}
\left[
\left(
\frac{\alpha^2}{2}
-
C\alpha
\right)
|\Lambda_k(t)|^2
-
C\alpha
\right]dt
\notag\\
&\quad+
M_k^W(T)-M_k^W(\tau)
+
M_k^N(T)-M_k^N(\tau).
\label{eq:BMO-lower-estimate}
\end{align}

Choose $\alpha>2C$. Taking the conditional expectation
with respect to $\mathcal F_\tau$ in
\eqref{eq:BMO-lower-estimate}, we obtain
\[
\begin{aligned}
\left(
\frac{\alpha^2}{2}
-
C\alpha
\right)
\mathbb E\left[
\left.
\int_\tau^T
e^{\alpha P_k(t-)}
|\Lambda_k(t)|^2dt
\right|
\mathcal F_\tau
\right]
&\le
\mathbb E\left[
\left.
e^{\alpha P_k(T)}
-
e^{\alpha P_k(\tau)}
\right|
\mathcal F_\tau
\right]
+
C\alpha
\mathbb E\left[
\left.
\int_\tau^T
e^{\alpha P_k(t-)}dt
\right|
\mathcal F_\tau
\right]
\\
&\qquad\le
e^{\alpha K}
+
C\alpha T e^{\alpha K}.
\end{aligned}
\]
Since $P_k(t-)\ge0$, we have
$
e^{\alpha P_k(t-)}\ge1.
$
It follows that
\begin{equation}\label{BMO-conditional-estimate}
\mathbb E\left[
\left.
\int_\tau^T
|\Lambda_k(t)|^2dt
\right|
\mathcal F_\tau
\right]
\le
C_\Lambda,
\qquad k=1,2,
\end{equation}
where
\[
C_\Lambda
:=
\frac{
e^{\alpha K}
\left(
1+C\alpha T
\right)
}{
\frac{\alpha^2}{2}-C\alpha
}
\]
is independent of $\tau$. Hence,
$\int_0^\cdot\Lambda_k(t)dW(t)$ is a BMO martingale.
\end{proof}

\begin{theorem}\label{Main}
Let Assumptions \ref{A1} and \ref{A33} hold. Suppose that
\eqref{csre1} admits a solution satisfying
\[
\mathbb P\big(
0<P_k(t)\le K\text{ for all }t\in[0,T]
\big)=1,
\qquad k=1,2.
\]
Then Problem (C-ZLQJ) has the feedback representation
\begin{equation*}\label{main11130}
u^*(t)
=
\Theta^+(t)(X^*(t-))^+
+
\Theta^-(t)(X^*(t-))^-.
\end{equation*}
Moreover,
\begin{eqnarray}\label{071206}
    J(x;u_1^*,u_2^*)
=
\mathbb E\left[
P_1(0)(x^+)^2+P_2(0)(x^-)^2
\right].
\end{eqnarray}
\end{theorem}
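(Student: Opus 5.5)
The plan is to construct the closed-loop state and verify, through Theorem \ref{THE265}, that the feedback candidate is the open-loop saddle point. Uniqueness then comes from Theorem \ref{26081}, since Assumption \ref{A33} implies the UCC condition.

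\textbf{Step 1: the closed-loop state.} Consider the closed-loop SDE with jumps obtained by substituting $u=\Theta^+(t)(X(t-))^++\Theta^-(t)(X(t-))^-$ into \eqref{state}. The predictable selectors of Proposition \ref{prop:hamiltonian-saddle} satisfy $|\Theta^\pm|\le C(1+|\Lambda_k|)$ by \eqref{1203}. The drift and jump coefficients are therefore Lipschitz in $X$ with random constants of size $1+|\Lambda_k|$, and the diffusion coefficient has the same property.

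\begin{itemize}
\item Because $\int\Lambda_k\,dW$ is BMO (Lemma \ref{Lemma4.1}), I would solve the equation by a localization/stopping argument.
\item Admissibility, $u^*\in\mathcal U_1\times\mathcal U_2$, needs $\mathbb E\int_0^T|\Lambda_k|^2|X|^2dt<\infty$.
\item To get it, I would use the energy-type identity from Step 2 on $[0,\tau_n]$, where $\tau_n$ localizes. The uniform convexity/concavity bounds \eqref{2301}--\eqref{230181} then give $\mathbb E\int_0^{\tau_n}|u|^2\le C|x|^2$ uniformly in $n$, and Fatou's lemma finishes.
\item Cone-valuedness holds because $\Theta^\pm\in\Pi_1\times\Pi_2$ and $\Pi_k$ are cones.
\end{itemize}

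I expect this step to be the main obstacle. The cross term of the player-2 control is indefinite, so an a priori bound must exploit the saddle structure. My first attempt would use the fixed-point controls and \eqref{player2-state-cost-estimate} with stopping times $\tau$.

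\textbf{Step 2: the adjoint candidate.} Define
\[
Y^*(t):=P_1(t)(X^*(t))^+-P_2(t)(X^*(t))^- .
\]
Applying the Meyer--It\^o formula with jumps to $(X^*)^+$ and $(X^*)^-$, then the product rule with $P_1,P_2$, expresses $dY^*$.

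\begin{itemize}
\item The jump contributions are exactly where the expressions $((1+E+F^\top v)^\pm)^2$ with post-jump weights $P_k(t-)+\Gamma_k$ in $\underline H_k$ come from. A jump of $X^*$ from positive to negative values switches the weight from $P_1+\Gamma_1$ to $P_2+\Gamma_2$.
\item Local time terms at $0$ vanish: the coefficient multiplying them is $P_1(t-)X^+ - P_2(t-)X^-$ evaluated at $X=0$, together with the fact that $X^*$ spends zero Lebesgue time at $0$ where the diffusion coefficient is nondegenerate, or is otherwise handled as in \cite{HSX2025}.
\end{itemize}

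Read off
\begin{align*}
Z^*&=\Lambda_1(X^*)^+ - \Lambda_2(X^*)^- + (P_1\mathbf 1_{\{X^*>0\}}+P_2\mathbf 1_{\{X^*<0\}})(CX^*+D^\top u^*),\\
\mathcal K^*&=\text{post-jump value minus pre-jump value}.
\end{align*}
Then check that the drift equals $-(AY^*+CZ^*+\int E\mathcal K^*\nu(dz)+QX^*)$. This uses the Riccati equations \eqref{csre1}, together with the envelope identity that at the saddle point $\Theta^\pm$ the value $H_k^*$ equals $H_k(\Theta^\pm)$. The first-order saddle conditions for $H_k$ are exactly what make the drift match, which is where positive homogeneity of the cones is used.

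\textbf{Step 3: variational inequalities.} The saddle-point first-order conditions of $H_1$ at $\Theta^+$ (on $\{X^*(t-)>0\}$), respectively of $H_2$ at $\Theta^-$ (on $\{X^*(t-)<0\}$), multiplied by $(X^*)^\pm\ge0$, translate into \eqref{eq07011}--\eqref{eq07012}. This uses the gradient identity
\[
\nabla_{v}H_1\cdot (X^*)^+=2\big(B Y^*+DZ^*+Ru^*+\textstyle\int F\mathcal K^*\nu(dz)\big),
\]
and the fact that $\Pi_k$ is a cone, so scaling by $(X^*)^\pm$ preserves the inequalities. On $\{X^*(t-)=0\}$ we have $u^*=0$, and the inequalities hold trivially because the relevant vector vanishes there. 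The integrability of $(Y^*,Z^*,\mathcal K^*)$ follows from the boundedness of $P_k$, Step 1, and the BMO property.

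Theorem \ref{THE265} then identifies $u^*$ as the unique open-loop saddle point. Finally, applying Itô's formula to $Y^*X^*$ (equivalently to $P_1((X^*)^+)^2+P_2((X^*)^-)^2$) and taking expectations, after localization and passing to the limit, gives \eqref{071206}.
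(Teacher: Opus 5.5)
Your architecture matches the paper's. You construct the closed-loop state, set $\bar Y=P_1\bar X^+-P_2\bar X^-$, match the adjoint drift via \eqref{csre1}, derive the variational inequalities by scaling, and conclude with Theorems \ref{THE265} and \ref{26081}. You are right that the drift matching rests on the cone first-order conditions $\langle\nabla_v\mathcal H_1(\Theta^+),\Theta^+\rangle=0$ and its analogue for $\mathcal H_2,\Theta^-$. Steps 2--3 and your final duality computation for $Y^*X^*$ are sound.

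The gap is the admissibility bound you yourself flag. The localized energy identity together with the pointwise bounds \eqref{2301}--\eqref{230181} cannot give $\mathbb E\int_0^{\tau_n}|\bar u|^2dt\le C|x|^2$:
\begin{itemize}
\item At the closed-loop pair the integrand $\psi$ vanishes identically. The identity then only says $\mathcal J_n(x;\bar u_1,\bar u_2)=\mathbb E[P_1(0)(x^+)^2+P_2(0)(x^-)^2]$. Since the running cost $\bar u^\top R\bar u$ is indefinite, nothing about $|\bar u|$ can be read off.
\item Evaluating the identity at any other control changes the state. Then $u/|X|$ is no longer $\Theta^\pm$, and the pointwise saddle inequalities give no sign.
\item If the ``fixed-point controls'' are the saddle point of Theorem \ref{26081}, using them is circular. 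Identifying them with the feedback candidate is the conclusion, not an input.
\end{itemize}

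The missing idea is a functional-level comparison on $[0,\tau_n]$, carried out before admissibility is known. You first need localized versions of your Steps 2--3. These are the adjoint equation on $[0,\tau_n]$ with terminal value $\bar Y(\tau_n)=\tfrac12\Phi_n'(\bar X(\tau_n))$, where $\Phi_n(\xi)=P_1(\tau_n)(\xi^+)^2+P_2(\tau_n)(\xi^-)^2$, and the variational inequalities there. The paper then argues in three steps.
\begin{itemize}
\item Duality for $\bar Y\Delta X$ with the deviation $u_1=0$ uses the player-1 inequality at $v_1=0$, $Q\ge0$, $R_{11}\succeq\underline\delta I_{m_1}$ and convexity of $\Phi_n$. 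It gives
\[ \mathcal J_n(x;0,\bar u_2)\ge\mathcal J_n(x;\bar u_1,\bar u_2)+\underline\delta\,\mathbb E\int_0^{\tau_n}|\bar u_1|^2dt. \]
\item Using $\Phi_n(\xi)\le K|\xi|^2$, \eqref{player2-state-cost-estimate} with $\tau=\tau_n$, and the requirement $R_{22}\preceq-(\bar\delta+K\bar c)I_{m_2}$ from Assumption \ref{A33}, one gets $\mathcal J_n(x;0,\bar u_2)\le K\bar c|x|^2-\bar\delta\,\mathbb E\int_0^{\tau_n}|\bar u_2|^2dt$.
\item Since $\mathcal J_n(x;\bar u_1,\bar u_2)\ge0$, combining the two yields $\underline\delta\,\mathbb E\int_0^{\tau_n}|\bar u_1|^2dt+\bar\delta\,\mathbb E\int_0^{\tau_n}|\bar u_2|^2dt\le K\bar c|x|^2$, uniformly in $n$.
\end{itemize}
You named the second ingredient. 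The first, which is what tames the indefinite player-2 term, is absent.

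Separately, your local-time justification is wrong. The coefficient of $dL^0(\bar X)$ in $d\bar Y$ is $\tfrac12(P_1(t-)-P_2(t-))$, which need not vanish. A nondegenerate diffusion at $0$ would in fact produce nonzero local time. What kills $L^0(\bar X)$ is the opposite: the closed-loop diffusion coefficient is bounded by $C(1+|\Lambda_1|+|\Lambda_2|)|\bar X(t-)|$. Equivalently, between jumps the closed-loop state is a stochastic exponential times its initial value and never changes sign.
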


Before proving Theorem \ref{Main}, we establish two auxiliary results.

\begin{lemma}\label{lem2}
Under the assumptions of Theorem \ref{Main}, then the closed-loop state equation
\[
\left\{
\begin{aligned}
d\bar X(t)
={}&
\big(
A\bar X
+
B^\top\bar u
\big)dt
+
\big(
C\bar X
+
D^\top\bar u
\big)dW(t)
+
\int_{U_0}
\big(
E(z)\bar X
+
F(z)^\top\bar u
\big)
\widetilde N(dt,dz),
\\
\bar X(0)
={}&x,
\end{aligned}
\right.
\]
admits a unique c\`adl\`ag adapted solution with control pair $\bar u(t)
=
\Theta^+(t)(\bar X(t-))^+
+
\Theta^-(t)(\bar X(t-))^-.
$
\end{lemma}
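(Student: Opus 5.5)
The closed-loop equation is a linear SDE with jumps whose coefficients are unbounded random processes. Substituting $\bar u=\Theta^+(\bar X(t-))^++\Theta^-(\bar X(t-))^-$, the drift, diffusion and jump coefficients become
\[
b(t,x)=Ax+B^\top(\Theta^+x^++\Theta^-x^-),\qquad
\sigma(t,x)=Cx+D^\top(\Theta^+x^++\Theta^-x^-),
\]
\[
\gamma(t,z,x)=E(z)x+F(z)^\top(\Theta^+x^++\Theta^-x^-).
\]
Each is positively homogeneous of degree one in $x$ and globally Lipschitz in $x$. The Lipschitz constants are of the form $C(1+|\Theta^+(t)|+|\Theta^-(t)|)$, and by \eqref{1203} in Proposition \ref{prop:hamiltonian-saddle} these are at most $C(1+|\Lambda_1(t)|+|\Lambda_2(t)|)$. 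The map $x\mapsto x^\pm$ is $1$-Lipschitz, and the selectors $\Theta^\pm$ are predictable, so all coefficients are predictable random fields. The plan is to prove existence and uniqueness pathwise, exploiting the structure that the unbounded part enters only through the Brownian integrand and the drift.

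\emph{Step 1: reduction to locally bounded coefficients.} Since $B$, $D$, $F$ are bounded and $\Lambda_k\in L^2_{\mathbb F}$, the processes $\int_0^t|\Theta^\pm|^2ds$ are finite a.s. Lemma \ref{Lemma4.1} gives more: $\int\Lambda_k\,dW$ is BMO. Introduce the stopping times
\[
\tau_n:=\inf\Big\{t:\int_0^t(|\Lambda_1|^2+|\Lambda_2|^2)ds\ge n\Big\}\wedge T,
\]
and note that $\tau_n\uparrow T$ with $\tau_n=T$ for $n$ large, a.s. On $[0,\tau_n]$ use the standard Picard / Gronwall argument for SDEJs with random Lipschitz coefficients. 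Because the drift Lipschitz bound $L(t)$ and the diffusion bound satisfy $\int_0^{\tau_n}(L(t)+L(t)^2)dt\le C(1+n)$, this argument (e.g.\ via the weighted norm $\mathbb E\sup_t e^{-\beta\int_0^t(1+L^2)ds}|X|^2$) gives a unique c\`adl\`ag adapted solution. The jump coefficient is handled the same way, using $\nu(U_0)<\infty$.

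\emph{Step 2: consistency and pasting.} Show that the solutions on $[0,\tau_n]$ agree on overlaps by uniqueness, and define $\bar X$ on $[0,T]$ by pasting them. Uniqueness on $[0,T]$ then follows: any two solutions coincide up to each $\tau_n$, hence everywhere. The resulting $\bar X$ is c\`adl\`ag and adapted, and $\bar u$ is predictable since it is built from $\bar X(t-)$.

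An alternative route, which I would try if the localized Picard argument becomes messy, is to write the SDE explicitly. On intervals where $\bar X$ keeps a sign it is a linear SDE $d\bar X=\bar X(t-)\,[a^\pm dt+c^\pm dW+\int e^\pm(z)\widetilde N(dt,dz)]$. Sign changes can occur only by continuous passage through $0$, where the solution stays at $0$ because the coefficients vanish at $0$, or by jumps. Since $\nu(U_0)<\infty$ there are finitely many jumps, so one can construct $\bar X$ by Dol\'eans-Dade exponentials between jump times and restart after each jump.

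The main obstacle is that the feedback gains are not bounded, only $L^2$/BMO through $\Lambda_k$, so standard Lipschitz SDE theory does not apply directly. This is handled by the localization above. Integrability of $\bar X$ and admissibility of $\bar u$, i.e.\ $\bar u\in\mathcal U$, are not required for this lemma and will be addressed separately.
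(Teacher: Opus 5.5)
Your proof is correct, but your main route differs from the paper's. What you list as the ``alternative route'' is essentially the proof the paper gives. The paper uses $\nu(U_0)<\infty$ to enumerate the finitely many atoms $(T_j,Z_j)$. On each $[T_j,T_{j+1})$ it writes $\bar X=\bar X_{j,+}-\bar X_{j,-}$, where $\bar X_{j,\pm}=(\xi_j)^\pm$ times an explicit, strictly positive stochastic exponential built from $a_\pm=A\pm B^\top\Theta^\pm-\int_{U_0}(E\pm F^\top\Theta^\pm)\nu(dz)$ and $c_\pm=C\pm D^\top\Theta^\pm$. The compensator is moved into the drift because $N$ has no atoms inside the interval. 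Since $(\xi_j)^+(\xi_j)^-=0$, only one piece is nonzero, so $(\bar X)^\pm=\bar X_{j,\pm}$ and the kink of $x\mapsto x^\pm$ never matters. The post-jump value comes from the jump rule, and uniqueness is asserted interval by interval from the a.s.\ integrable random Lipschitz modulus.

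Your main route instead treats the closed loop as an SDEJ with stochastic Lipschitz coefficients. You localize where $\int_0^t(|\Lambda_1|^2+|\Lambda_2|^2)ds$ reaches $n$ and run a weighted-norm contraction. This buys three things:
\begin{itemize}
\item it does not rely on finite activity or explicit formulas;
\item it gives $\mathbb E\sup_{t\le\tau_n}|\bar X(t)|^2<\infty$ for free;
\item it supplies the Gronwall-type estimate behind uniqueness, which the paper only asserts.
\end{itemize}
The paper's construction is more elementary, and it shows that $\bar X$ keeps its sign between jumps and can reach or leave $0$ only through a jump. Your argument does not reveal this structure. In your sketch of the alternative, the ``continuous passage through $0$'' in fact never occurs, since the Dol\'eans-Dade exponential of a continuous semimartingale is strictly positive.

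Two small fixes to your main route:
\begin{itemize}
\item The unbounded gains also enter the jump integrand $F(z)^\top\bar u$, not only the drift and the Brownian integrand. This is harmless: bounded $F$ and $\nu(U_0)<\infty$ give the jump coefficient an $L^2(\nu)$-Lipschitz modulus $C(1+|\Lambda_1|+|\Lambda_2|)$, which is absorbed exactly like the diffusion term. The sentence should still be corrected.
\item The contraction only yields uniqueness within the class where the weighted $\mathcal S^2$ norm is finite. To get uniqueness among all c\`adl\`ag adapted solutions, as the lemma states, localize further at $\sigma_m:=\inf\{t:|X(t)|+|Y(t)|\ge m\}$. The overshoot jump at $\sigma_m$ is square integrable by the compensation formula, because $|X(s-)|+|Y(s-)|\le m$ for $s\le\sigma_m$.
\end{itemize}
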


\begin{proof}
By Proposition \ref{prop:hamiltonian-saddle}, the selectors are
predictable and satisfy \eqref{1203}. Since
$\nu(U_0)<\infty$, the Poisson random measure has finitely many atoms on
$[0,T]\times U_0$, $\mathbb P$-a.s. Outside a fixed null set, write them
as
\[
(T_1,Z_1),\ldots,(T_{N_T},Z_{N_T}),
\qquad
0<T_1<\cdots<T_{N_T}\le T,
\]
and set $T_0:=0$, $T_{N_T+1}:=T$, and $\xi_0:=x$.

Suppose that an $\mathcal F_{T_j}$-measurable random variable $\xi_j$ has
been defined. On $[T_j,T_{j+1})$, set
\[
\begin{aligned}
a_+
&:=A+B^\top\Theta^+
-
\int_{U_0}
\big(E(z)+F(z)^\top\Theta^+\big)\nu(dz),\ 
c_+
:=C+D^\top\Theta^+,
\\
a_-
&:=A-B^\top\Theta^-
-
\int_{U_0}
\big(E(z)-F(z)^\top\Theta^-\big)\nu(dz),\ 
c_-
:=C-D^\top\Theta^-.
\end{aligned}
\]
The selector estimates and $\Lambda_1,\Lambda_2\in
L_{\mathbb F}^2(0,T;\mathbb R)$ imply
\[
\int_{T_j}^{T_{j+1}}
\big(
|a_+(t)|+|a_-(t)|+|c_+(t)|^2+|c_-(t)|^2
\big)dt<\infty,
\qquad \mathbb P\text{-a.s.}
\]
Define, for $T_j\le t<T_{j+1}$,
\[
\begin{aligned}
\bar X_{j,+}(t)
&:=(\xi_j)^+
\exp\left\{
\int_{T_j}^t
\left(a_+(s)-\frac12|c_+(s)|^2\right)ds
+
\int_{T_j}^t c_+(s)dW(s)
\right\},
\\
\bar X_{j,-}(t)
&:=(\xi_j)^-
\exp\left\{
\int_{T_j}^t
\left(a_-(s)-\frac12|c_-(s)|^2\right)ds
+
\int_{T_j}^t c_-(s)dW(s)
\right\}.
\end{aligned}
\]
The exponential factors are strictly positive. Since
$(\xi_j)^+(\xi_j)^-=0$, one of these two processes is identically zero.
Thus, with
\[
\bar X(t):=\bar X_{j,+}(t)-\bar X_{j,-}(t),
\]
we have
$(\bar X(t))^+=\bar X_{j,+}(t)$ and
$(\bar X(t))^-=\bar X_{j,-}(t)$. Subtracting the two scalar linear
equations gives the continuous part of the closed-loop dynamics on
$[T_j,T_{j+1})$.

If $j+1\le N_T$, define
\[
\begin{aligned}
\xi_{j+1}
:={}&\bar X(T_{j+1}-)
+E(T_{j+1},Z_{j+1})\bar X(T_{j+1}-)
\\
&+F_1(T_{j+1},Z_{j+1})^\top\bar u_1(T_{j+1})
+F_2(T_{j+1},Z_{j+1})^\top\bar u_2(T_{j+1}),
\end{aligned}
\]
and set $\bar X(T_{j+1})=\xi_{j+1}$. Iterating over the finitely many
jump intervals yields a global c\`adl\`ag adapted solution.

On each interval $[T_j,T_{j+1})$, the closed-loop drift and diffusion
coefficients are Lipschitz in the state with an a.s. integrable random
Lipschitz modulus. Hence pathwise uniqueness holds on that interval. The
jump update is uniquely determined by the pre-jump state and the Poisson
mark. Induction over the jump times proves global uniqueness.
\end{proof}

\begin{lemma}\label{lem:feedback-admissibility}
Let the assumptions of Theorem \ref{Main} hold, and let $(\bar X,\bar u)$ be the closed-loop state--control pair constructed in Lemma \ref{lem2}. Then $
\bar u\in\mathcal U_1\times\mathcal U_2
\ \text{and}\ 
\bar X\in\mathcal S_{\mathbb F}^2(0,T;\mathbb R).$
\end{lemma}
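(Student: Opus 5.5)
The strategy is to apply the Itô--Meyer (Tanaka) formula to $P_1(t)((\bar X(t))^+)^2+P_2(t)((\bar X(t))^-)^2$ up to a localizing sequence of stopping times. We then use the saddle-point property of $\Theta^\pm$, the bounds $0<P_k\le K$, and Assumption \ref{A33} to obtain a uniform a priori bound on $\mathbb E\int_0^{\tau_n}|\bar u|^2dt$ and on $\mathbb E|\bar X(\tau_n)|^2$. Once $\bar u\in L^2_{\mathbb F}$, with $\bar u\in\Pi_1\times\Pi_2$ by construction (the cones are closed under nonnegative scaling and $\Theta^\pm\in\Pi_1\times\Pi_2$), the standard linear SDEJ estimate under (A1) gives $\bar X\in\mathcal S^2_{\mathbb F}$.

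\textbf{Step 1 (localization).} Let
\[
\tau_n:=\inf\Big\{t:|\bar X(t)|\ge n \text{ or } \int_0^t(|\Lambda_1|^2+|\Lambda_2|^2)ds\ge n\Big\}\wedge T.
\]
Since $|\bar X(t-)|\le n$ on $[0,\tau_n]$, \eqref{1203} gives $|\bar u|\le Cn(1+|\Lambda_1|+|\Lambda_2|)$ there, so $\mathbb E\int_0^{\tau_n}|\bar u|^2<\infty$. Moreover $\tau_n\uparrow T$ a.s., because $\bar X$ is càdlàg and $\Lambda_k\in L^2$.

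\textbf{Step 2 (Itô--Meyer and completion of squares).} Apply the Meyer--Itô formula with jumps to $\Phi(t)=P_1((\bar X)^+)^2+P_2((\bar X)^-)^2$ on $[0,\tau_n]$. On $\{\bar X(t-)>0\}$ the $dt$-integrand, after adding $Q|\bar X|^2+\bar u^\top R\bar u$ and using \eqref{csre1}, equals
\[
(\bar X^+)^2\big[H_1(\Theta^+)-H_1^*\big],
\]
which is $0$ since $\Theta^+$ is the saddle point. The jump terms reproduce exactly the integrals in $\underline H_1$: the post-jump state $\bar X(t-)(1+E+F^\top\Theta^+)$ splits into positive and negative parts weighted by $P_1+\Gamma_1$ and $P_2+\Gamma_2$. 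The case $\bar X(t-)<0$ is symmetric with $H_2,\Theta^-$. The local time at $0$ contributes nothing because the function is $C^1$ with quadratic behavior. This yields
\[
\mathbb E\Big[\int_0^{\tau_n}\big(Q|\bar X|^2+\bar u^\top R\bar u\big)dt+\Phi(\tau_n)\Big]=\mathbb E\,\Phi(0),
\]
where the stochastic integrals are true martingales by localization together with $|\Gamma_k|\le K$ (Remark \ref{06161}).

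\textbf{Step 3 (uniform bound), the main obstacle.} The cost is indefinite, so this identity alone does not bound $\bar u$. I would split $\bar u^\top R\bar u$ and use $R_{11}\succeq\underline\delta I$ to get
\[
\underline\delta\,\mathbb E\int_0^{\tau_n}|\bar u_1|^2\le K x^2+\mathbb E\int_0^{\tau_n}\big(-2\bar u_1^\top R_{12}\bar u_2-\bar u_2^\top R_{22}\bar u_2\big).
\]
This is not enough by itself, because the $R_{22}$ term has the wrong sign for bounding $\bar u_2$. Instead, I would compare against player 2's deviation using the saddle inequality in the other direction. Rerunning Step 2 with $\bar u_1$ replaced by the (feedback) minimizer and $\bar u_2$ kept, and using the strict concavity margin $R_{22}\preceq-(\bar\delta+K\bar c)I$ together with estimate \eqref{player2-state-cost-estimate} at the stopping time $\tau_n$, should give
\[
\bar\delta\,\mathbb E\int_0^{\tau_n}|\bar u_2|^2\le C|x|^2.
\]
That estimate is precisely why \eqref{player2-state-cost-estimate} was stated for stopping times and with the constant $K\bar c$. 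Feeding this back, Young's inequality on the cross term yields a bound on $\mathbb E\int_0^{\tau_n}|\bar u_1|^2$ that is independent of $n$. Fatou's lemma as $n\to\infty$ then gives $\bar u\in L^2$, and hence $\bar X\in\mathcal S^2_{\mathbb F}$. If the comparison argument proves awkward, the fallback is a direct BDG--Gronwall estimate on $|\bar X|^2$ using the BMO property of $\Lambda_k$ (Lemma \ref{Lemma4.1}) via the energy inequality for BMO martingales. In either route, controlling the $\Lambda$-dependent feedback gain is the delicate point.
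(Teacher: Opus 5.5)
Your Steps 1 and 2 are fine and agree with the paper. Your localized identity is \eqref{localized-feedback-value}, and you are right that $P_1(\xi^+)^2+P_2(\xi^-)^2$ is $C^1$ in $\xi$, so no local time appears there.

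The gap is in Step 3, which is the real content of the lemma. Completion of squares against $P_1(X^+)^2+P_2(X^-)^2$ turns the running cost into $\big(H_k(v_1,v_2)-H_k^*\big)|X|^2$ with $v=u/|X(t-)|$. This has a definite sign only when one player's normalized control equals the saddle gain along the \emph{current} state.

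Once you freeze $\bar u_2$ as a process and change player 1's control, the state is no longer $\bar X$. Then $v_2=\bar u_2/|X(t-)|$ is not $\Theta_2^\pm$, and the only available inequality, $\min_{v_1}H_k(v_1,v_2)\le H_k^*$, points the wrong way. Player 2's deviation inequality bounds $\mathcal J_n(x;\bar u_1,\bar u_2)$ only from below, which is also the wrong direction. Finally, \eqref{player2-state-cost-estimate} controls $X_x^{0,v_2}$, the state with player 1 playing zero, so it cannot be combined with a feedback minimizer for player 1. As written, nothing in your Step 3 produces $\bar\delta\,\mathbb E\int_0^{\tau_n}|\bar u_2|^2dt\le C|x|^2$.

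The missing ingredient is that $\bar u_1$ is a strongly optimal response to the \emph{frozen open-loop} process $\bar u_2$ on $[0,\tau_n]$, with terminal cost $\Phi_n(\xi)=P_1(\tau_n)(\xi^+)^2+P_2(\tau_n)(\xi^-)^2$:
\[
\mathcal J_n(x;\bar u_1,\bar u_2)\le\mathcal J_n(x;0,\bar u_2)-\underline\delta\,\mathbb E\int_0^{\tau_n}|\bar u_1(t)|^2dt .
\]
The paper proves this by a first-order (adjoint) argument, not by completing squares:
\begin{enumerate}
\item It expands $\bar Y=P_1\bar X^+-P_2\bar X^-$ by Meyer--It\^o. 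The resulting term $\frac12(P_1-P_2)dL^0(\bar X)$ vanishes by the occupation-density formula, because the diffusion coefficient of $\bar X$ is bounded by $C(1+|\Lambda_1|+|\Lambda_2|)|\bar X(t-)|$.
\item Hence $(\bar Y,\bar Z,\bar{\mathcal K})$ solves the localized adjoint BSDEJ with $\bar Y(\tau_n)=\frac12\Phi_n'(\bar X(\tau_n))$.
\item The saddle property of $\Theta^\pm$ gives the cone variational inequalities for $\bar{\mathcal G}_1$ and $\bar{\mathcal G}_2$.
\item It\^o's formula for $\bar Y\Delta X_1$, the convexity of $\Phi_n$, $Q\ge0$ and $R_{11}\succeq\underline\delta I_{m_1}$ then yield the display.
\end{enumerate}

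Next, \eqref{player2-state-cost-estimate}, $\Phi_n\le K|\xi|^2$ and the margin on $R_{22}$ give $\mathcal J_n(x;0,\bar u_2)\le K\bar c|x|^2-\bar\delta\,\mathbb E\int_0^{\tau_n}|\bar u_2|^2dt$; this is where that estimate actually enters. Together with $\mathcal J_n(x;\bar u_1,\bar u_2)\ge0$, this bounds both controls by $K\bar c|x|^2$ at once, so no Young step is needed.

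Your BMO fallback is not a safe substitute. By \eqref{1203} the feedback gains are of size $|\Lambda_k|$, so $L^2$ bounds on the resulting stochastic exponentials require a reverse H\"older inequality. The standard theory supplies that only under a smallness condition on the BMO norm, and Lemma \ref{Lemma4.1} provides no such smallness.
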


\begin{proof}
Since $\Theta_i^+,\Theta_i^-\in\Pi_i$,
$(\bar X(t-))^\pm\ge0$, and $\Pi_i$ is a convex cone,
$\bar u_i(t)\in\Pi_i$. Predictability follows from the predictability of
$\bar X(t-)$ and the selectors.

For $n\ge1$, set
\begin{equation*}\label{feedback-localizing-time}
\tau_n
:=
\inf\{t\in[0,T]:|\bar X(t)|\ge n\}\wedge T.
\end{equation*}
Since $\bar X$ is c\`adl\`ag, $\tau_n\uparrow T$ a.s. By
\eqref{1203},
\begin{equation*}\label{feedback-growth-estimate}
|\bar u_1(t)|^2+|\bar u_2(t)|^2
\le
C\big(1+|\Lambda_1(t)|^2+|\Lambda_2(t)|^2\big)
|\bar X(t-)|^2.
\end{equation*}
Consequently,
\begin{equation}\label{localized-feedback-integrability}
\mathbb E\int_0^{\tau_n}
\big(|\bar u_1(t)|^2+|\bar u_2(t)|^2\big)dt<\infty,
\end{equation}
and the standard stopped-state estimate gives
\begin{equation}\label{localized-state-S2}
\mathbb E\sup_{0\le t\le T}|\bar X(t\wedge\tau_n)|^2<\infty.
\end{equation}
In addition,
\begin{equation}\label{localized-Lambda-state-integrability}
\mathbb E\int_0^{\tau_n}
\big(|\Lambda_1(t)|^2+|\Lambda_2(t)|^2\big)
|\bar X(t-)|^2dt<\infty.
\end{equation}
For the remainder of the proof, set 
\begin{eqnarray}\label{eq072102}
 \bar Y(t)
:=
P_1(t)(\bar X(t))^+
-
P_2(t)(\bar X(t))^-.   
\end{eqnarray}
 By the product formula  and Meyer--It\^o's formula with jumps, we obtain  
\begin{eqnarray*}
  d\bar Y(t)
=
\bar B_Y(t)dt+\bar Z(t)dW(t)
+\int_{U_0}\bar K(t,z)\widetilde N(dt,dz)
+\frac12\big(P_1(t-)-P_2(t-)\big)dL_t^0(\bar X),  
\end{eqnarray*}
where
\begin{eqnarray*}
  \bar B_Y(t)
&=&
\big(I_{\bar{X}>0}(t)P_1+I_{\bar{X}\leq0}P_2\big)
\Big(A\bar X+B^\top\bar u
\Big)-\bar X^+
\Big(
(2A+C^2)P_1+2C\Lambda_1+Q+H_1^*
\Big)\\
&&+\bar X^-
\big(
(2A+C^2)P_2+2C\Lambda_2+Q+H_2^*
\big)+\big(I_{\bar{X}>0}\Lambda_1+I_{\bar{X}\leq0}\Lambda_2\big)
\big(C\bar X+D^\top\bar u\big)\\
&&+\int_{U_0}
\Big(
\bar K
-\bar X^+\Gamma_1
+\bar X^-\Gamma_2
-\big(I_{\bar{X}>0}P_1+I_{\bar{X}\leq0}P_2\big)
\big(E\bar X+F^\top\bar u
\big)
\Big)\nu(dz), 
\end{eqnarray*}
\begin{eqnarray}\label{eq072104}
 \bar Z&=&
I_{\bar{X}>0}P_1
\big(C\bar X+D^\top\bar u
\big)+
I_{\bar{X}\leq0}P_2
\big(C\bar X+D^\top\bar u
\big)+\Lambda_1\bar X^+
-\Lambda_2\bar X^-.   
\end{eqnarray}
and 
\begin{eqnarray}\label{eq072105}
  \bar K
&=&
(P_1+\Gamma_1)
\big(\bar X+E\bar X+F^\top\bar u\big)^+-
(P_2+\Gamma_2)
\big(\bar X+E\bar X+F^\top\bar u
\big)^--
P_1\bar X^+
+
P_2\bar X^-. \  
\end{eqnarray}
By
\eqref{localized-feedback-integrability},
\eqref{localized-state-S2},
\eqref{localized-Lambda-state-integrability}, and
\eqref{postjump-bounds-section4}, we get 
$
\bar Z\mathbf 1_{[0,\tau_n]}
\in
L_{\mathbb F}^2(0,T;\mathbb R) \ \text{and} \ 
\bar{\mathcal K}\mathbf 1_{[0,\tau_n]}
\in
\mathcal L_{\mathcal P}^{2,\nu}
(0,T;\mathbb R).
$
Hence, On $[0,\tau_n]$, the IESREJs together with the saddle-point equalities
$
\cH_1(\Theta_1^+,\Theta_2^+)=H_1^*,\ 
\cH_2(\Theta_1^-,\Theta_2^-)=H_2^*,
$
give
\begin{align*}
d\bar Y(t)
={}&-
\left(
A\bar Y+C\bar Z
+\int_{U_0}E(z)\bar{\mathcal K}(z)\nu(dz)
+Q\bar X
\right)dt+\bar Z(t)dW(t)
\notag\\
&
+\int_{U_0}\bar{\mathcal K}(z)\widetilde N(dt,dz)
+\frac12(P_1(t-)-P_2(t-))dL_t^0(\bar X).
\label{localized-adjoint-with-local-time}
\end{align*}
Here $L^0(\bar X)$ denotes the local time generated by the
continuous semimartingale part of $\bar X$. On
$[0,\tau_n]$,
\[
|C(t)\bar X(t)+D^\top(t)\bar u(t)|^2
\le
C\big(
1+|\Lambda_1(t)|^2+|\Lambda_2(t)|^2
\big)
|\bar X(t-)|^2.
\]
Hence, by the occupation-density formula,
\[
\begin{aligned}
\mathbb E L_{t\wedge\tau_n}^0(\bar X)
&=
\lim_{\varepsilon\downarrow0}
\frac{1}{2\varepsilon}
\mathbb E\int_0^{t\wedge\tau_n}
\mathbf 1_{\{|\bar X(s-)|\le\varepsilon\}}
|C(s)\bar X(s)+D^\top(s)\bar u(s)|^2ds
\\
&\le
C\lim_{\varepsilon\downarrow0}
\varepsilon
\mathbb E\int_0^T
\big(
1+|\Lambda_1(s)|^2+|\Lambda_2(s)|^2
\big)ds
=0.
\end{aligned}
\]
Therefore,
$
L_{t\wedge\tau_n}^0(\bar X)=0,\
0\le t\le T,\ \mathbb P\text{-a.s.},
$  and 
\begin{equation}\label{localized-feedback-adjoint}
\left\{
\begin{aligned}
d\bar Y(t)
={}&-
\left(
A\bar Y+C\bar Z
+\int_{U_0}E(z)\bar{\mathcal K}(z)\nu(dz)
+Q\bar X
\right)dt
+\bar Z(t)dW(t)
+\int_{U_0}\bar{\mathcal K}(z)\widetilde N(dt,dz),
\\
\bar Y(\tau_n)
={}&P_1(\tau_n)(\bar X(\tau_n))^+
-P_2(\tau_n)(\bar X(\tau_n))^-.
\end{aligned}
\right.
\end{equation}

Define
\begin{align*}
\bar{\mathcal G}_1
:={}&
B_1\bar Y+D_1\bar Z
+R_{11}\bar u_1+R_{12}\bar u_2
+\int_{U_0}F_1(z)\bar{\mathcal K}(z)\nu(dz),
\\
\bar{\mathcal G}_2
:={}&
B_2\bar Y+D_2\bar Z
+R_{21}\bar u_1+R_{22}\bar u_2
+\int_{U_0}F_2(z)\bar{\mathcal K}(z)\nu(dz).
\end{align*}
On $\{\bar X(t-)>0\}$, for $v_i\in\Pi_i$,
\begin{align*}
\langle\bar{\mathcal G}_1,v_1-\bar u_1\rangle
&=
\frac{|\bar X(t-)|^2}{2}
\left\langle
\partial_{v_1}\cH_1(\Theta_1^+,\Theta_2^+),
\frac{v_1}{\bar X(t-)}-\Theta_1^+
\right\rangle\ge0,
\\
\langle\bar{\mathcal G}_2,v_2-\bar u_2\rangle
&=
\frac{|\bar X(t-)|^2}{2}
\left\langle
\partial_{v_2}\cH_1(\Theta_1^+,\Theta_2^+),
\frac{v_2}{\bar X(t-)}-\Theta_2^+
\right\rangle\le0.
\end{align*}
The same argument with $\cH_2$ and $\Theta^-$ applies on
$\{\bar X(t-)<0\}$. On $\{\bar X(t-)=0\}$, all the displayed
quantities vanish. Thus, on $[0,\tau_n]$,
\begin{equation}\label{localized-feedback-VI}
\langle\bar{\mathcal G}_1,v_1-\bar u_1\rangle\ge0,
\qquad
\langle\bar{\mathcal G}_2,v_2-\bar u_2\rangle\le0,
\qquad
(v_1,v_2)\in\Pi_1\times\Pi_2.
\end{equation}

For $\xi\in\mathbb R$, define
$
\Phi_n(\xi)
:=P_1(\tau_n)(\xi^+)^2+P_2(\tau_n)(\xi^-)^2.
$
Then $\Phi_n$ is convex and continuously differentiable, and
\begin{equation}\label{localized-terminal-estimate}
0
\le
\Phi_n(\xi+h)-\Phi_n(\xi)-\Phi_n'(\xi)h
\le
K|h|^2,
\qquad \xi,h\in\mathbb R.
\end{equation}
For predictable controls $v=(v_1,v_2)$ on $[0,\tau_n]$, define
\[
\mathcal J_n(x;v_1,v_2)
:=
\mathbb E\left[
\int_0^{\tau_n}
\big(Q|X_x^{v_1,v_2}(t)|^2+v(t)^\top R(t)v(t)\big)dt
+
\Phi_n(X_x^{v_1,v_2}(\tau_n))
\right].
\]
The generalized It\^o formula applied to
$P_1(t)((\bar{X}(t))^+)^2$ and $P_2(t)((\bar{X}(t))^-)^2$ gives
\begin{equation*}\label{061661}
\begin{aligned}
\mathcal J_n(x;\bar u_1,\bar u_2)
={}&
\mathbb E\left[P_1(0)(x^+)^2+P_2(0)(x^-)^2\right]
\\
&+
\mathbb E\int_0^{\tau_n}
\psi
(\omega,t,\bar X,\bar u_1,\bar u_2,
P_1,P_2,\Lambda_1,\Lambda_2,\Gamma_1,\Gamma_2)dt,
\end{aligned}
\end{equation*}
where, with
$I_+(t):=\mathbf 1_{\{X(t-)>0\}}$ and
$I_-(t):=\mathbf 1_{\{X(t-)\le0\}}$,
\begin{align}
&\psi
(\omega,t,X,u_1,u_2,P_1,P_2,\Lambda_1,\Lambda_2,\Gamma_1,\Gamma_2)
\notag\\
&=
 u_1^\top
\Big(R_{11}+(I_+P_1+I_-P_2)D_1D_1^\top\Big)u_1+
 u_2^\top
\Big(R_{22}+(I_+P_1+I_-P_2)D_2D_2^\top\Big)u_2
\notag\\
&\quad+
2u_1^\top
\Big((I_+P_1+I_-P_2)D_1D_2^\top+R_{12}\Big)u_2
\notag\\
&\quad+
2I_+u_1^\top(P_1B_1+P_1D_1C+D_1\Lambda_1)X^+
-2I_-u_1^\top(P_2B_1+P_2D_1C+D_1\Lambda_2)X^-
\notag\\
&\quad+
2I_+u_2^\top(P_1B_2+P_1D_2C+D_2\Lambda_1)X^+
-2I_-u_2^\top(P_2B_2+P_2D_2C+D_2\Lambda_2)X^-
\notag\\
&\quad+
\big(2P_2I_-X^- -2P_1I_+X^+\big)
\int_{U_0}
\big(E(z)X+F_1(z)^\top u_1+F_2(z)^\top u_2\big)\nu(dz)
\notag\\
&\quad+
\int_{U_0}
(P_1+\Gamma_1(z))
\left[
\left(
\big(
X+E(z)X+F_1(z)^\top u_1+F_2(z)^\top u_2
\big)^+
\right)^2
-(X^+)^2
\right]\nu(dz)
\notag\\
&\quad+
\int_{U_0}
(P_2+\Gamma_2(z))
\left[
\left(
\big(
X+E(z)X+F_1(z)^\top u_1+F_2(z)^\top u_2
\big)^-
\right)^2
-(X^-)^2
\right]\nu(dz)
\notag\\
&\quad-
H_1^*(\omega,t,P_1,P_2,\Lambda_1,\Gamma_1,\Gamma_2)(X^+)^2
-
H_2^*(\omega,t,P_1,P_2,\Lambda_2,\Gamma_1,\Gamma_2)(X^-)^2.
\label{02031}
\end{align}
Observe  (\ref{02031}), we define 
 $$v(t):=(v_1(t)^\top,v_2(t)^\top)^\top=
\left\{
\begin{array}{ll}
(\frac{u_1^\top(t)}{|X(t-)|}, \frac{u_2^\top(t)}{|X(t-)|})^\top, \  \text{if}\  |X(t-)|>0, \\
(0,0)^\top, \text{if}\  |X(t-)|=0, \\
\end{array}
\right.$$
 It is clear that $v$ is valued in $\Pi_1\times \Pi_2$. Hence, if $X(t-)>0$, then 
 \begin{eqnarray*}\label{11301}
 \psi(\omega, t,X, u_1, u_2,P_1, P_2, \Lambda_1,\Lambda_2,\Gamma_1, \Gamma_2)
   =\bigg\{H_1(
\omega,t,v_1,v_2,
P_1,P_2,\Lambda_1,\Gamma_1,\Gamma_2)-H_1^*(\omega,t, P_1, P_2,\Lambda_1, \Gamma_1,\Gamma_2)\bigg\}X^2.
 \end{eqnarray*}
If $X(t-)\leq 0,$ then
 \begin{eqnarray*}
\psi(\omega, t,X, u_1, u_2,P_1, P_2, \Lambda_1,\Lambda_2,\Gamma_1, \Gamma_2)
=\bigg\{H_2(
\omega,t,v_1,v_2,
P_1,P_2,\Lambda_2,\Gamma_1,\Gamma_2)-H_2^*(\omega,t, P_1, P_2,\Lambda_2, \Gamma_1,\Gamma_2)\bigg\}X^2.
  \end{eqnarray*}
if $X(t-)=0$, then 
\begin{eqnarray*}
&&\psi(\omega, t, u_1, u_2,P_1, P_2, \Lambda_1,\Lambda_2,\Gamma_1, \Gamma_2)\nonumber\\
&=&u_1^\top \big(R_{11}+P_2D_1 D_1^\top\big)u_1 +u_2^\top \big(R_{22}+P_2D_2 D_2^\top \big)u_2+2u_1^\top (P_2D_1 D_2^\top+R_{12}) u_2\nonumber\\
&&+\int_{U_0}(P_1+\Gamma_1(z))\big((F_1^\top (z)u_1+F_2^\top(z)u_2\big)^+)^2\nu(dz)\nonumber\\
 && +\int_{U_0}(P_2+\Gamma_2(z))\big((F_1^\top(z)u_1+F_2^\top(z)u_2)^-\big)^2\nu(dz).
\end{eqnarray*}
Hence, for the feedback pair $(\bar{X}, \bar{u})$,
\[
\begin{aligned}
\psi(\omega, t,\bar{X}, \bar{u}_1, \bar{u}_2,P_1, P_2, \Lambda_1,\Lambda_2,\Gamma_1, \Gamma_2)
&=(\bar{X}^+)^2\big(\cH_1(\Theta_1^+,\Theta_2^+)-H_1^*\big)\notag\\
&\quad+(\bar{X}^-)^2\big(\cH_2(\Theta_1^-,\Theta_2^-)-H_2^*\big)
=0.
\end{aligned}
\]
Therefore,
\begin{equation}\label{localized-feedback-value}
\mathcal J_n(x;\bar u_1,\bar u_2)
=
\mathbb E\left[P_1(0)(x^+)^2+P_2(0)(x^-)^2\right].
\end{equation}

Let $u_1$ be admissible on $[0,\tau_n]$ and set
$
\Delta X_1(t)
:=X_x^{u_1,\bar u_2}(t)-\bar X(t).
$
Applying It\^o's formula to $\bar Y\Delta X_1$, and using
\eqref{localized-feedback-adjoint}, gives
\begin{align*}
&\mathcal J_n(x;u_1,\bar u_2)
-
\mathcal J_n(x;\bar u_1,\bar u_2)\notag\\
&=
2\mathbb E\int_0^{\tau_n}
\langle\bar{\mathcal G}_1,u_1-\bar u_1\rangle dt+
\mathbb E\int_0^{\tau_n}
\left[
Q|\Delta X_1|^2
+(u_1-\bar u_1)^\top R_{11}(u_1-\bar u_1)
\right]dt
\notag\\
&\quad+
\mathbb E\Big[
\Phi_n(\bar X(\tau_n)+\Delta X_1(\tau_n))
-\Phi_n(\bar X(\tau_n))
-\Phi_n'(\bar X(\tau_n))\Delta X_1(\tau_n)
\Big].
\label{localized-player1-expansion}
\end{align*}
By \eqref{localized-feedback-VI},
\eqref{localized-terminal-estimate}, $Q\ge0$, and
$R_{11}\succeq\underline\delta I_{m_1}$,
\begin{equation*}\label{localized-player1-strong-convexity}
\mathcal J_n(x;u_1,\bar u_2)
-
\mathcal J_n(x;\bar u_1,\bar u_2)
\ge
\underline\delta
\mathbb E\int_0^{\tau_n}|u_1(t)-\bar u_1(t)|^2dt.
\end{equation*}
Taking $u_1=0$ yields
\begin{equation*}\label{localized-player1-zero-comparison}
\mathcal J_n(x;\bar u_1,\bar u_2)
\le
\mathcal J_n(x;0,\bar u_2)
-
\underline\delta
\mathbb E\int_0^{\tau_n}|\bar u_1(t)|^2dt.
\end{equation*}
Since $\Phi_n(\xi)\le K|\xi|^2$, the estimate
\eqref{player2-state-cost-estimate} and
$R_{22}\preceq-(\bar\delta+K\bar c)I_{m_2}$ imply
\begin{equation*}\label{localized-player2-energy}
\mathcal J_n(x;0,\bar u_2)
\le
K\bar c|x|^2
-
\bar\delta
\mathbb E\int_0^{\tau_n}|\bar u_2(t)|^2dt.
\end{equation*}
The right-hand side of \eqref{localized-feedback-value} is nonnegative.
Combining the last three estimates gives
\begin{equation*}\label{uniform-feedback-energy}
\underline\delta
\mathbb E\int_0^{\tau_n}|\bar u_1(t)|^2dt
+
\bar\delta
\mathbb E\int_0^{\tau_n}|\bar u_2(t)|^2dt
\le
K\bar c|x|^2.
\end{equation*}
Letting $n\to\infty$ and using monotone convergence, we obtain
\[
\mathbb E\int_0^T
\big(|\bar u_1(t)|^2+|\bar u_2(t)|^2\big)dt<\infty.
\]
Thus $\bar u\in\mathcal U_1\times\mathcal U_2$. The standard estimate for
the state equation then gives
\[
\mathbb E\sup_{0\le t\le T}|\bar X(t)|^2
\le
C\left[
|x|^2+
\mathbb E\int_0^T
\big(|\bar u_1(t)|^2+|\bar u_2(t)|^2\big)dt
\right]
<\infty,
\]
so $\bar X\in\mathcal S_{\mathbb F}^2(0,T;\mathbb R)$.
\end{proof}

\begin{proof}[Proof of Theorem \ref{Main}]
Let $(\bar X,\bar u)$ be the closed-loop state--control pair
constructed above. By Lemmas \ref{lem2} and
\ref{lem:feedback-admissibility},
 we have 
$\bar u\in\mathcal U_1\times\mathcal U_2,\ 
\bar X\in\mathcal S_{\mathbb F}^2(0,T;\mathbb R).
$
Recall the processes
$\bar Y,\bar Z,\bar{\mathcal K}$ and
$\bar{\mathcal G}_i$, $i=1,2$, defined above.

We first verify their global integrability. Set
\[
\mathcal A(t)
:=
\int_0^t
\big(
|\Lambda_1(s)|^2+|\Lambda_2(s)|^2
\big)ds
\]
and, for $\lambda\ge0$,
\[
\rho_\lambda
:=
\inf\left\{
t\in[0,T]:
|\bar X(t)|^2>\lambda
\right\}\wedge T.
\]
Since $\bar X$ is c\`adl\`ag, we have 
$
\mathbf 1_{\{|\bar X(t-)|^2>\lambda\}}
\le
\mathbf 1_{\{\rho_\lambda<t\}},\
d\mathbb P\otimes dt\text{-a.e.}
$
Hence, by the conditional BMO estimate
\eqref{BMO-conditional-estimate},
\[
\begin{aligned}
\mathbb E\int_0^T
\mathbf 1_{\{|\bar X(t-)|^2>\lambda\}}d\mathcal A(t)
&\le
\mathbb E\left[
\mathbf 1_{\{\rho_\lambda<T\}}
\mathbb E\left[
\left.
\mathcal A(T)-\mathcal A(\rho_\lambda)
\right|
\mathcal F_{\rho_\lambda}
\right]
\right]
\le
C_\Lambda
\mathbb P\left(
\sup_{0\le t\le T}|\bar X(t)|^2>\lambda
\right).
\end{aligned}
\]
The layer-cake formula and Tonelli's theorem therefore give
\begin{equation*}\label{global-Lambda-X-estimate}
\begin{aligned}
&\mathbb E\int_0^T
\big(
|\Lambda_1(t)|^2+|\Lambda_2(t)|^2
\big)|\bar X(t-)|^2dt
\le
C_\Lambda
\mathbb E\sup_{0\le t\le T}|\bar X(t)|^2
<\infty.
\end{aligned}
\end{equation*}
Recall that $\bar{Y}, \bar{X}$ and $\bar{\cK}$ were defined in
\eqref{eq072102}, \eqref{eq072104} and \eqref{eq072105}, the
boundedness of the coefficients, and
\eqref{postjump-bounds-section4}, gives
$
|\bar Y(t)|
\le
K|\bar X(t)|
$
and
\[
\begin{aligned}
|\bar Z(t)|^2
+
\int_{U_0}
|\bar{\mathcal K}(t,z)|^2\nu(dz)
\le
C\Big[
&\big(
1+|\Lambda_1(t)|^2+|\Lambda_2(t)|^2
\big)|\bar X(t-)|^2
+
|\bar u_1(t)|^2+|\bar u_2(t)|^2
\Big].
\end{aligned}
\]
Consequently,
$
(\bar Y,\bar Z,\bar{\mathcal K})
\in
\mathcal S_{\mathbb F}^2(0,T;\mathbb R)
\times
L_{\mathbb F}^2(0,T;\mathbb R)
\times
\mathcal L_{\mathcal P}^{2,\nu}(0,T;\mathbb R).
$

We next pass to the limit in the localized adjoint
relations. Since $\tau_n\uparrow T$ and every c\`adl\`ag
path on $[0,T]$ is bounded,then
$
\bar Y(\tau_n)\to\bar Y(T),\ \text{in }L^2(\Omega).
$
Moreover, the preceding integrability implies that the drift
terms in \eqref{localized-feedback-adjoint} converge in
$L^1$, whereas the Brownian and Poisson stochastic integrals
converge in $L^2$ by their respective isometries. Letting
$n\to\infty$ in \eqref{localized-feedback-adjoint}, we obtain
\begin{equation*}\label{global-feedback-adjoint}
\left\{
\begin{aligned}
d\bar Y(t)
={}&-
\Big(
A\bar Y
+C\bar Z
+Q\bar X
+
\int_{U_0}
E(z)\bar{\mathcal K}(z)\nu(dz)
\Big)dt
+
\bar ZdW(t)
+
\int_{U_0}
\bar{\mathcal K}(z)\widetilde N(dt,dz),
\\
\bar Y(T)
={}&G(T)\bar X(T).
\end{aligned}
\right.
\end{equation*}
Here the terminal condition follows from
$P_1(T)=P_2(T)=G(T)$.

Likewise, since
$\mathbf 1_{\{t\le\tau_n\}}\uparrow1$
$d\mathbb P\otimes dt$-a.e.,
\eqref{localized-feedback-VI} yields
\begin{equation*}\label{global-feedback-VI}
\left\{
\begin{aligned}
\left\langle
\bar{\mathcal G}_1(t),
v_1-\bar u_1(t)
\right\rangle
&\ge0,
&&v_1\in\Pi_1,
\\
\left\langle
\bar{\mathcal G}_2(t),
v_2-\bar u_2(t)
\right\rangle
&\le0,
&&v_2\in\Pi_2,
\end{aligned}
\right.
\end{equation*}
for $d\mathbb P\otimes dt$-a.e. $(t,\omega)$.

Thus,
$(\bar X,\bar Y,\bar Z,\bar{\mathcal K},\bar u)$
satisfies the FBSDEJ and the cone-type variational
inequalities in Theorem \ref{THE265}. Since Assumption
\ref{A33} implies the UCC condition, Theorem
\ref{THE265} shows that $\bar u$ is an open-loop saddle
point. Its uniqueness follows from Theorem \ref{26081};
hence,
$
\bar u=u^*,\ 
\bar X=X^*.
$
Therefore,
$$
u^*(t)
=
\Theta^+(t)(X^*(t-))^+
+
\Theta^-(t)(X^*(t-))^-,
$$
which proves the feedback-form representation.

Finally, by \eqref{localized-feedback-value}, then
$
\mathcal J_n(x; u_1^*,u_2^*)
=
\mathbb E\left[
P_1(0)(x^+)^2+P_2(0)(x^-)^2
\right].
$
Since $\tau_n=T$ for all sufficiently large $n$, pathwise,
\[
\Phi_n( X^*(\tau_n))
\to
G(T)| X^*(T)|^2.
\]
Moreover,
$
0\le
\Phi_n( X^*(\tau_n))
\le
K\sup_{0\le t\le T}|X^*(t)|^2,
$
and the running integrand is bounded in absolute value by
$
C\left(
| X^*(t)|^2
+
| u_1^*(t)|^2
+
| u_2^*(t)|^2
\right).
$
Dominated convergence therefore gives
$
\mathcal J_n(x;u_1^*, u_2^*)
\to
J(x;u_1^*,u_2^*).
$
Consequently, (\ref{071206}) hold.
\end{proof}

\section{Solvability of the IESREJs} \label{sec4}
\setcounter{equation}{0}
\renewcommand{\theequation}{\thesection.\arabic{equation}}
It is worth emphasizing that the IESREJs \eqref{csre1} are indefinite. Even under the UCC condition, their solvability is difficult to establish directly. Indeed, the cone constraints replace the classical stationarity conditions with variational inequalities and therefore prevent a direct application of the four-step scheme to derive the Riccati system. As a result, the method of Sun et al. \cite{MR4254486} for proving the solvability of indefinite SREs under uniform convexity does not apply directly to the present problem. Under additional structural conditions, we instead prove the solvability of \eqref{csre1} by combining a double-truncation approximation with the multidimensional BSDEJ comparison results of Hu et al. \cite{HSX2025,HSX20251}. The details are presented below.

 We focus on the solvability of the system of I-ESREJs (\ref {csre1}) with  coefficient condition
\begin{equation}\label{071126}
  F_2(t,z)=0,\quad dt\otimes d\mathbb P\otimes\nu(dz)\text{-a.e.},
\qquad
R_{12}(t)=0,\quad D_1(t)D_2(t)^\top=0,
\quad dt\otimes d\mathbb P\text{-a.e.}
\end{equation}
 Under (\ref{071126}) constraints, the  $H_k$ admits the decomposition 
 $$H_k(\omega,t,v_1,v_2, P_1, P_2, \Lambda_k, \Gamma_1,\Gamma_2)=\underbar{H}_{k}(\omega,t,v_1, P_1, P_2, \Lambda_k, \Gamma_1,\Gamma_2)+\bar{H}_{k}(\omega,t, v_2,P_k,  \Lambda_k), \ k=1,2.$$
In this section, we choose the constants $\bar{\delta} $  and $K$ in  Assumptions \ref{A33} as follows
  \begin{equation}\label{2622}
     \bar{\delta} \stackrel{\triangle}{=}(\bar{c}+1)^2e^{4\bar{c}T}(e^{2\bar{c}T}-1),\ \  K \stackrel{\triangle}{=}(\bar{c}+1)e^{2\bar{c}T},  
  \end{equation}
  We  will prove $K$ to be an upper bound of  $P_k(\cdot), k=1,2.$
  \begin{theorem}\label{06031}
Let \eqref{071126} and Assumptions \ref{A1} and \ref{A33}
hold, where the constants $\bar\delta$ and $K$ in Assumption \ref{A33} are given by \eqref{2622}. Then the system of IESREJs \eqref{csre1} admits a solution
$
(P_k,\Lambda_k,\Gamma_k)_{k=1,2}
$
such that $$
\mathbb P\left(
0<P_k(t)\le K,\quad 0\le t\le T
\right)=1,
\qquad k=1,2.
$$
\end{theorem}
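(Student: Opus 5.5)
Under \eqref{071126} the Hamiltonians split as $H_k=\underline H_k(v_1)+\bar H_k(v_2)$. Hence
\[
H_k^*=\inf_{v_1\in\Pi_1}\underline H_k+\sup_{v_2\in\Pi_2}\bar H_k .
\]
Here $\sup_{v_2}\bar H_k$ depends only on $(P_k,\Lambda_k)$. It is a concave-quadratic supremum: since $R_{22}+P_kD_2D_2^\top\preceq -\bar\delta I$ whenever $0\le P_k\le K$, it is finite, nonnegative (take $v_2=0$), and of order $|\Lambda_k|^2$. The infimum over $v_1$ couples $P_1,P_2,\Gamma_1,\Gamma_2$ only through the post-jump quantities $P_j+\Gamma_j$. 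My plan is a double truncation, followed by monotone limits driven by the multidimensional BSDEJ comparison theorem of Hu et al.\ \cite{HSX2025,HSX20251}.

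\textbf{Truncated system.} For $m,n\ge1$ I replace $\sup_{v_2\in\Pi_2}\bar H_k$ by $\sup_{v_2\in\Pi_2,|v_2|\le n}\bar H_k$. This is Lipschitz in $\Lambda_k$ and nondecreasing in $n$. I also replace $\inf_{v_1}\underline H_k$ by $\inf_{v_1\in\Pi_1,|v_1|\le m}\underline H_k$, which is nonincreasing in $m$. In both, $P_j$ and $P_j+\Gamma_j$ are inserted through the clipping $\pi(p)=(p\vee 0)\wedge K$. The resulting generators are Lipschitz in $(P,\Lambda,\Gamma)$ and bounded for bounded $\Lambda$, so standard multidimensional BSDEJ theory gives a unique solution $(P^{m,n}_k,\Lambda^{m,n}_k,\Gamma^{m,n}_k)$.

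\textbf{Uniform barriers.} Next I show $\underline\delta e^{-\bar c T}\cdot(\text{const})\le P^{m,n}_k\le K$. For the upper bound I would compare with the scalar linear BSDEJ with generator $\bar c P+\bar c+\sup_{v_2}\bar H_k$. The choice of $K,\bar\delta$ in \eqref{2622} is designed so that this is a supersolution: $v_1=0$ gives $\inf\underline H_k\le\int P(\ldots)\nu$, which is controlled by $\bar c P$. The term $\sup_{v_2}\bar H_k$ is harder; I would bound it by rewriting the $P_k$ equation, via the state estimate \eqref{player2-state-cost-estimate}, as the value of a Player-2 control problem with terminal weight $\le K$. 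That gives $P_k\le (\bar c+1)e^{2\bar cT}=K$, and the condition $R_{22}\preceq-(\bar\delta+K\bar c)I$ makes the bound self-consistent.

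For the lower bound, $\sup_{v_2}\ge0$. The infimum over $v_1$ is bounded below using $R_{11}\succeq\underline\delta$ and the completion-of-squares bound \eqref{071302} restricted to the $v_1$-part, which involves no $\Lambda$ term beyond a controllable one. Comparison then yields $P^{m,n}_k\ge \underline\delta e^{-CT}>0$. Remark \ref{06161} then propagates both bounds to $P+\Gamma$, so the clippings are inactive and the truncated system solves the untruncated one with truncated control sets.

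\textbf{Monotone limits, and the main obstacle.} I would let $m\to\infty$ first, then $n\to\infty$. Monotonicity of the generators in $m$ and in $n$ combined with the comparison theorem gives monotone sequences $P^{m,n}_k$. This requires checking the quasi-monotonicity condition: the $k$-th generator must be nondecreasing in $P_j+\Gamma_j$ for $j\ne k$, which holds because $(P_j+\Gamma_j)((\cdot)^\mp)^2$ enters with a nonnegative coefficient. The uniform barriers, the BMO estimate of Lemma \ref{Lemma4.1} (uniform in $m,n$), and the selector bounds \eqref{1203} then show that the truncations are inactive once $m,n\gtrsim C(1+|\Lambda|)$.

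The hard step is upgrading the monotone pointwise convergence of $P$ to strong $L^2$ convergence of $(\Lambda,\Gamma)$ under quadratic growth. I would use the Kobylanski-type exponential transform $e^{\alpha P}$ together with It\^o's formula on $|P^{m}-P^{m'}|^2$. The exponential-stability argument controls the quadratic $\Lambda$ terms via the uniform BMO bound, and the post-jump barriers control the $\Gamma$ terms. Passing to the limit in the generators by dominated convergence then yields a solution with $0<P_k\le K$.
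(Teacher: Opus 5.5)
\textbf{Gap: the upper barrier $P_k^{m,n}\le K$.} Your overall architecture matches the paper's: double truncation, barriers via the multidimensional comparison theorem of \cite{HSX2025}, post-jump bounds by compensation, and the two monotone limits taken in the same order. Your lower barrier is essentially the paper's. The gap is the upper barrier, and everything downstream depends on it. It is what gives $R_{22}+P_kD_2D_2^\top\preceq-\bar\delta I_{m_2}$, hence finiteness and quadratic growth of $\sup_{v_2}\bar H_k$, the selector bounds, and the uniform $\Lambda$-estimates.

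Your proposed route does not deliver this bound, for three reasons.
\begin{enumerate}
\item Comparing with a scalar BSDEJ whose generator contains $\sup_{v_2}\bar H_k(v_2,P,\Lambda)$ produces a random supersolution with nonzero $(\tilde\Lambda,\tilde\Gamma)$. The comparison theorem then requires you to dominate the system's generator at those arguments. The $v_1=0$ bound $\Phi_k$ reduces to ``$\bar cP$'' only when the $\Gamma$-arguments vanish and $P_1=P_2$. Otherwise it carries terms such as $\int_{U_0}\tilde\Gamma(z)(2E(z)+E(z)^2)\nu(dz)$ and the cross term in $P_{3-k}+\Gamma_{3-k}$.
\item That scalar equation is itself a quadratic BSDE. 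Its concavity in $v_2$ needs exactly the bound $\tilde P\le K$ you are trying to prove.
\item Reading $P_k$ as the value of a Player-2 problem is circular. $P_k^{m,n}$ is coupled to $P_{3-k}^{m,n}$ through the post-jump terms of $\underline H_k$, so it is not a one-player value. Any verification argument identifying it with a value already presupposes $0<P_k\le K$. Even granting the identification, \eqref{player2-state-cost-estimate} gives only $P_k\le K\bar c$, not $P_k\le K=(\bar c+1)e^{2\bar cT}$. So the loop does not close when $\bar c>1$.
\end{enumerate}

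\textbf{The missing idea: a deterministic barrier.} Compare with a barrier whose martingale components vanish. Let $\bar P$ solve $\dot{\bar P}=-2(\bar c\bar P+\bar c+\bar\delta^{-1}\bar cK^2)$, $\bar P(T)=\bar c$, with $\bar\Lambda=0$ and $\bar\Gamma=0$. Explicitly, $\bar P(t)=(\bar c+a)e^{2\bar c(T-t)}-a$ with $a=1+K^2/\bar\delta$. Then $\sup_t\bar P(t)\le K$ if and only if $\bar\delta\ge K^2(e^{2\bar cT}-1)$, which is exactly the choice \eqref{2622}.

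Because $\bar P\le K$ is now a verified fact about an explicit function, you get $R_{22}+\bar PD_2D_2^\top\preceq-\bar\delta I_{m_2}$. Hence $\bar H_k(v_2,\bar P,0)\le-\bar\delta|v_2|^2+2\bar P|B_2+CD_2||v_2|\le\bar\delta^{-1}\bar cK^2$. The truncated infimum at $(\bar P,\bar P,0,0,0)$ is at most $\Phi_k(\bar P,\bar P,0,0)=\bar P\int_{U_0}E(z)^2\nu(dz)\le\bar c\bar P$. With $G\le\bar c=\bar P(T)$, the comparison theorem gives $P_k^{m,n}\le\bar P\le K$. This is where the ``self-consistency'' you allude to actually comes from.

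\textbf{Smaller point on the limits.} It\^o's formula for $|P^m-P^{m'}|^2$ cannot absorb the quadratic $\Lambda$-growth. The paper instead applies $\Psi_\kappa(x)=(e^{\kappa x}-\kappa x-1)/\kappa$ to the \emph{signed} monotone difference. It uses that $\Delta P(t-)$ and $\Delta P(t-)+\Delta\Gamma(t,z)$ both lie in $[0,K-\varepsilon_0]$, where $\varepsilon_0$ is the lower-barrier constant, to obtain jump coercivity. For the second limit this is combined with weak compactness and weighted lower semicontinuity. The paper also treats the first limit, where the truncated $\sup_{v_2}$ is still Lipschitz, separately from the second.
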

\begin{proof}
For $n\in\mathbb N^+$, define
\[
\begin{aligned}
\underline H_k^n
(\omega,t,P_1,P_2,\Lambda_k,\Gamma_1,\Gamma_2)
:={}&
\inf_{\substack{v_1\in\Pi_1\\ |v_1|\le n}}
\underline H_k
(\omega,t,v_1,P_1,P_2,\Lambda_k,\Gamma_1,\Gamma_2),
\\
\bar H_k^n
(\omega,t,P_k,\Lambda_k)
:={}&
\sup_{\substack{v_2\in\Pi_2\\ |v_2|\le n}}
\bar H_k
(\omega,t,v_2,P_k,\Lambda_k),
\qquad k=1,2.
\end{aligned}
\]
For each $(n,\bar n)\in\NN^+\times\NN^+$,  the
following  BSDEJ:
\begin{equation}\label{csre1224}
\left\{\begin{array}{ccl}
dP_1^{n,\bar{n}}(t)&=&-\bigg\{\big(2A(t)+C^2(t)\big)P_1^{n,\bar{n}}(t-)  +2C(t)\Lambda_1^{n,\bar{n}}(t)
 +Q(t)+\bar{H}^{\bar{n}}_{1}(\omega,t, P_1^{n,\bar{n}},\Lambda_1^{n,\bar{n}})  \\
 &&\ \ +\underline{H}_{1}^n(\omega,t, P_1^{n,\bar{n}}, P_2^{n,\bar{n}},\Lambda_1^{n,\bar{n}}, \Gamma_1^{n,\bar{n}}, \Gamma_2^{n,\bar{n}}) \bigg\}dt +\Lambda_1^{n,\bar{n}}(t)dW(t)+\int_{U_0}\Gamma_1^{n,\bar{n}}(t,z)\tilde{N}(dt,dz),\\
 dP_2^{n,\bar{n}}(t)&=&-\bigg\{\big(2A(t)+C^2(t) \big)P_2^{n,\bar{n}}(t-)+2C(t) \Lambda_2^{n,\bar{n}}(t)
 +Q(t)+\bar{H}^{\bar{n}}_{2}(\omega,t, P_2^{n,\bar{n}},\Lambda_2^{n,\bar{n}})
 \\
 &&\ \  +\underline{H}_{2}^n(\omega,t, P_1^{n,\bar{n}}, P_2^{n,\bar{n}},\Lambda_2^{n,\bar{n}}, \Gamma_1^{n,\bar{n}}, \Gamma_2^{n,\bar{n}}) \bigg\}dt +\Lambda_2^{n,\bar{n}}(t)dW(t)+\int_{U_0}\Gamma_2^{n,\bar{n}}(t,z)\tilde{N}(dt,dz),\\
P_1^{n,\bar{n}}(T)&=&G(T),\ P_2^{n,\bar{n}}(T)=G(T),
\end{array}\right.\end{equation}
 is a two-dimensional coupled BSDEJ with a Lipschitz generator. According to Lemma 2.4 in \cite{TL1994}, it admits a unique solution $\big(P_k^{n,\bar{n}}(\cdot),\Lambda_k^{n,\bar{n}}(\cdot), \Gamma_k^{n,\bar{n}} (\cdot,\cdot)\big)\in \cS^2_{\mathbb{F}}(0,T; \mathbb{R})\times L_{\mathbb{F}}^2(0,T; \mathbb{R})\times\cL_{\cP}^{2,\nu}(0,T; \mathbb{R}),\  \  \text{for}\  k=1,2$. 
 
We verify the conditions of
\cite[Theorem 2.2]{HSX2025}. For the first component,
the boundedness of the controls in the truncated Hamiltonians
gives
\begin{align}
&\underline H_1^n
(\omega,t,P_1,P_2,\Lambda_1,\Gamma_1',\Gamma_2)
-
\underline H_1^n
(\omega,t,P_1',P_2',\Lambda_1',\Gamma_1',\Gamma_2')+
\bar H_1^{\bar n}(\omega,t,P_1,\Lambda_1)
-
\bar H_1^{\bar n}(\omega,t,P_1',\Lambda_1')
\notag\\
&\le
C_{n,\bar n}
\Bigg[
|P_1-P_1'|
+
(P_2-P_2')^+
+
|\Lambda_1-\Lambda_1'|
+
\int_{U_0}
\big(
P_2+\Gamma_2(z)
-P_2'-\Gamma_2'(z)
\big)^+\nu(dz)
\Bigg].
\label{eq:comparison-cross-condition}
\end{align}
Keeping all the other variables fixed, we also have
\begin{align*}
&\underline H_1^n
(\omega,t,P_1,P_2,\Lambda_1,\Gamma_1,\Gamma_2)
-
\underline H_1^n
(\omega,t,P_1,P_2,\Lambda_1,\Gamma_1',\Gamma_2)
\notag\\
&\le
\sup_{\substack{v_1\in\Pi_1\\|v_1|\le n}}
\int_{U_0}
\big(
\Gamma_1(z)-\Gamma_1'(z)
\big)
\Big[
\big(
(1+E(z)+F_1(z)^\top v_1)^+
\big)^2-1
\Big]\nu(dz)
\notag\\
&\le
C_n
\int_{U_0}
\big(
\Gamma_1(z)-\Gamma_1'(z)
\big)^+\nu(dz)
+
\int_{U_0}
|\Gamma_1(z)-\Gamma_1'(z)|\nu(dz).
\label{eq:comparison-own-jump-condition}
\end{align*}
The corresponding estimates hold for the second component. After adding the linear terms
$(2A+C^2)P_k+2C\Lambda_k+Q,$
the generators satisfy conditions {\rm H(2)} and {\rm H(3)} of \cite[Theorem 2.2]{HSX2025}. Their values at the origin are square integrable, and both generators are globally Lipschitz. Since
$
\underline H_k^{n+1}
\le
\underline H_k^n
$, $
\bar{H}_k^{\bar{n}}
\le
\bar{ H}_k^{\bar{n}+1}
$. Therefore,
\cite[Theorem 2.2]{HSX2025} yields
\begin{equation}\label{eq:monotonicity-inner}
P_k^{n+1,\bar n}(t)
\le
P_k^{n,\bar n}(t) \ \text{and}\ P_k^{n,\bar {n}}(t)
\le
P_k^{n,\bar {n}+1}(t),
\qquad
0\le t\le T,\quad k=1,2.
\end{equation}

Since ${\bf 0}\in\Pi_1$, we have
$
\underline H_k^n
(\omega,t,P_1,P_2,\Lambda_k,\Gamma_1,\Gamma_2)
\le
\Phi_k(P_1,P_2,\Gamma_1,\Gamma_2),
 k=1,2,
$
where
\[
\begin{aligned}
\Phi_1(P_1,P_2,\Gamma_1,\Gamma_2)
:={}&
\int_{U_0}
\Big[
(P_1+\Gamma_1(z))
\big(
((1+E(z))^+)^2-1
\big)
\\
&\qquad
-2P_1E(z)
+
(P_2+\Gamma_2(z))
((1+E(z))^-)^2
\Big]\nu(dz),
\\
\Phi_2(P_1,P_2,\Gamma_1,\Gamma_2)
:={}&
\int_{U_0}
\Big[
(P_2+\Gamma_2(z))
\big(
((-1-E(z))^-)^2-1
\big)
\\
&\qquad
-2P_2E(z)
+
(P_1+\Gamma_1(z))
((-1-E(z))^+)^2
\Big]\nu(dz).
\end{aligned}
\]
Moreover, since ${\bf 0}\in\Pi_2$ and
$\bar H_k(\omega,t,{\bf 0},P_k,\Lambda_k)=0$, we have 
$
\bar H_k^{\bar n}
\ge0, k=1,2.
$

Note that
$
(\underline P,\underline\Lambda,\underline\Gamma)
=
\big(
\underline\delta e^{-\underline c_1(T-t)},0,0
\big)
$
is the unique solution to the one-dimensional BSDEJ
\begin{equation*}\label{csre1227}
\left\{\begin{array}{ccl}
d\underline{P}(t)&=&-(-\underline{c}_1\underline{P}(t))dt +\underline{\Lambda}(t)dW(t)+\int_{U_0}\underline{\Gamma}(t,z)\tilde{N}(dt,dz),\\
\underline{P}(T)&=&\underline{\delta},\\
\end{array}\right.\end{equation*}
where $\underline c_1>0$ is chosen such that
\begin{align}
&2A+C^2
+
\int_{U_0}E(z)^2\nu(dz)
-
\underline\delta^{-1}
\left|
B_1^\top 
+D_1^\top C
+
\int_{U_0}
E(z)F_1^\top(z)\nu(dz)
\right|^2
\ge
-\underline c_1.
\label{eq:lower-barrier-coefficient}
\end{align}
Using $(r^+)^2+(r^-)^2=r^2, r\in\mathbb R,$
the structural condition \eqref{071126}, and the definition
of $\underline H_k^n$, we obtain
\begin{align*}
&\underline H_k^n
\big(
\omega,t,
\underline P,
\underline P,
0,0,0
\big)
\\
&\ge
\underline P(t)
\int_{U_0}E(t,z)^2\nu(dz)
+
\inf_{v_1\in\mathbb R^{m_1}}
v_1^\top R_{11}(t)v_1
\\
&\quad+
\underline P(t)
\inf_{v_1\in\mathbb R^{m_1}}
\Bigg\{
v_1^\top
\left[
D_1(t)D_1(t)^\top
+
\int_{U_0}
F_1(t,z)F_1(t,z)^\top\nu(dz)
\right]v_1
\\
&\hspace{39mm}
\mathbin{\pm}
2
\left[
B_1^\top(t)
+D_1(t)^\top C(t)
+
\int_{U_0}
E(t,z)F_1^\top(t,z)\nu(dz)
\right] v_1
\Bigg\},
\end{align*}
where the plus sign corresponds to $k=1$ and the minus sign
corresponds to $k=2$.  By Assumption \ref{A33} and completing the square, we obtain
\begin{align}
&\underline H_k^n
\big(
\omega,t,
\underline P,
\underline P,
0,0,0
\big)
\ge
\underline P
\Bigg[
\int_{U_0}E(t,z)^2\nu(dz)\notag\\
&
-
\underline\delta^{-1}
\left|
B_1^\top(t)
+D_1(t)^\top C(t)
+
\int_{U_0}
E(t,z)F_1^\top(t,z)\nu(dz)
\right|^2
\Bigg], \ \text{k=1,2}
\label{eq:lower-Hamiltonian-estimate}
\end{align}
Since $Q\ge0$ and $\bar H_k^{\bar n}\ge0$,
\eqref{eq:lower-barrier-coefficient} and
\eqref{eq:lower-Hamiltonian-estimate} imply
\begin{align}
&(2A(t)+C(t)^2)\underline P(t)
+Q(t)
+
\underline H_k^n
\big(
\omega,t,
\underline P,
\underline P,
0,0,0
\big)+
\bar H_k^{\bar n}
\big(
\omega,t,
\underline P,
0
\big)\ge
-\underline c_1\underline P(t),
\  k=1,2.
\label{eq:lower-generator-comparison}
\end{align}

Consider the two-dimensional barrier process
$ (\underline P,\underline P,0,0,0,0).$
Its generator is linear and globally Lipschitz. Moreover,
$
\underline P(T)
=
\underline\delta
\le
G(T).
$
Therefore, \eqref{eq:lower-generator-comparison} and
\cite[Theorem 2.2]{HSX2025} give
\begin{equation}\label{eq:lower-bound-truncated}
P_k^{n,\bar n}(t)
\ge
\underline P(t)
\ge
\underline\delta e^{-\underline c_1T}
=:
\varepsilon_0,
\qquad
0\le t\le T,\quad k=1,2.
\end{equation}

On the other hand, note that, $(\bar{P},0, 0)=((\bar{c}+\frac{\bar{\delta}+K^2}{\bar{\delta}})e^{2\bar{c}(T-t)}-\frac{\bar{\delta}+K^2}{\bar{\delta}},0,0)$
is the unique solution to the one-dimensional BSDEJ
\begin{equation*}
\left\{\begin{array}{ccl}
d\bar{P}(t)&=&-2\{\bar{c}\bar{P}+\bar{c}+\bar{\delta}^{-1}\bar{c}{K}^2\}dt +\bar{\Lambda} dW(t)+\int_{U_0}\bar{\Gamma}(z)\tilde{N}(dt,dz),\\
\bar{P}(T)&=&\bar{c}.\\
\end{array}\right.\end{equation*}
By the choice of $\bar\delta$ and $K$ in \eqref{2622}, we have 
$
0<\bar P(t)\le K,
\qquad 0\le t\le T.
$

Since the Brownian and jump components of $\bar P$ vanish,
\begin{align*}
\bar H_k^{\bar n}(\omega,t,\bar P,0)
=
\sup_{\substack{v_2\in\Pi_2\\ |v_2|\le\bar n}}
\bar H_k(\omega,t,v_2,\bar P,0)
\le
\sup_{v_2\in\mathbb R^{m_2}}
\bar H_k(\omega,t,v_2,\bar P,0)
\le
2\bar\delta^{-1}\bar cK^2.
\end{align*}
Furthermore,
\[
\Phi_k(\bar P,\bar P,0,0)
=
\bar P
\int_{U_0}E(z)^2\nu(dz),
\qquad k=1,2.
\]
Consequently, by (\ref{26051}), we have
\begin{align*}
&(2A(t)+C(t)^2)\bar P(t)
+Q(t)
+
\underline H_k^n
\big(
\omega,t,
\bar P,
\bar P,
0,0,0
\big)
\notag\\
&\quad+
\bar H_k^{\bar n}
\big(
\omega,t,
\bar P,
0
\big)
\le
2\bar c\bar P(t)
+
2\bar c
+
2\bar\delta^{-1}\bar cK^2,
\qquad k=1,2.
\label{eq:upper-generator-comparison}
\end{align*}
Since
$
G(T)\le\bar c=\bar P(T),
$
Theorem 2.2 in \cite{HSX2025} gives
\begin{equation}\label{eq:upper-bound-truncated}
P_k^{n,\bar n}(t)
\le
\bar P(t)
\le K,\ 
 t\in[0,T],\quad k=1,2.
\end{equation}
Combining \eqref{eq:lower-bound-truncated} and
\eqref{eq:upper-bound-truncated}, we obtain
\begin{equation}\label{eq:truncated-path-bounds}
\varepsilon_0
\le
P_k^{n,\bar n}(t)
\le K,
\qquad
0\le t\le T,\quad k=1,2.
\end{equation}

We next derive the corresponding post-jump bounds. Define
\[
\begin{aligned}
A_k^{n,\bar n}
:=
\big\{
(\omega,t,z):
&P_k^{n,\bar n}(t-)
+\Gamma_k^{n,\bar n}(t,z)
\notin[\varepsilon_0,K]
\big\}.
\end{aligned}
\]
At every atom $(T_j,Z_j)$ of the Poisson random measure,
$
P_k^{n,\bar n}(T_j)
=
P_k^{n,\bar n}(T_j-)
+
\Gamma_k^{n,\bar n}(T_j,Z_j).
$
Hence, by the compensation formula and
\eqref{eq:truncated-path-bounds},
\[
\begin{aligned}
\mathbb E
\int_0^T\int_{U_0}
\mathbf 1_{A_k^{n,\bar n}}
\nu(dz)dt=
\mathbb E
\int_0^T\int_{U_0}
\mathbf 1_{A_k^{n,\bar n}}
N(dt,dz)=
\mathbb E
\sum_{T_j\le T}
\mathbf 1_{\{
P_k^{n,\bar n}(T_j)
\notin[\varepsilon_0,K]
\}}
=0.
\end{aligned}
\]
Therefore,
\begin{equation}\label{eq:truncated-postjump-bounds}
\varepsilon_0
\le
P_k^{n,\bar n}(t-)
+\Gamma_k^{n,\bar n}(t,z)
\le K,
\qquad k=1,2,
\end{equation}
$d\mathbb P\otimes dt\otimes\nu(dz)$-a.e. Since
$
\varepsilon_0
\le
P_k^{n,\bar n}(t-)
\le K,
$
we also have
\begin{equation}\label{eq:truncated-Gamma-bound}
|\Gamma_k^{n,\bar n}(t,z)|
\le
K-\varepsilon_0,
\qquad k=1,2.
\end{equation}

Fix $\bar n\in\mathbb N^+$. By
\eqref{eq:monotonicity-inner} and
\eqref{eq:truncated-path-bounds}, the limit
$
P_k^{\bar n}(t)
:=
\lim_{n\to\infty}
P_k^{n,\bar n}(t)
$
is well defined and satisfies
$
\varepsilon_0\le P_k^{\bar n}(t)\le K.
$
Moreover,
\begin{equation}\label{eq:first-P-limit}
\lim_{n\to\infty}
\sum_{k=1}^2
\mathbb E\int_0^T
|P_k^{n,\bar n}(t)-P_k^{\bar n}(t)|^2dt
=0.
\end{equation}
The two-sided Hamiltonian estimates established in (\ref{071302})-(\ref{071303})
and \eqref{eq:truncated-postjump-bounds} imply
$
-C\big(
1+|\Lambda_k^{n,\bar n}|^2
\big)
\le
\underline H_k^n
\le C,
$
while, for fixed $\bar n$,
$
0\le
\bar H_k^{\bar n}
\le
C_{\bar n}
\big(
1+|\Lambda_k^{n,\bar n}|
\big).
$
Itô's formula applied to $|P_k^{n,\bar n}|^2$ therefore
gives
\begin{equation*}\label{eq:first-uniform-estimate}
\sup_{n\ge1}
\sum_{k=1}^2
\mathbb E\int_0^T
\left[
|\Lambda_k^{n,\bar n}(t)|^2
+
\int_{U_0}
|\Gamma_k^{n,\bar n}(t,z)|^2\nu(dz)
\right]dt
<\infty.
\end{equation*}

For $m>n$, both
$
P_k^{n,\bar n}-P_k^{m,\bar n}
$
and its post-jump value are nonnegative. Hence the monotone
stability argument in the proof of
\cite[Theorem 3.1]{HSX20251}, in particular
\cite[Lemma 3.1 and Appendix B]{HSX20251}, applies. The additional term
$\bar H_k^{\bar n}$ is globally Lipschitz and satisfies, for
every $\eta>0$,
\[
\begin{aligned}
\Psi_\kappa'
\big(
P_k^{n,\bar n}-P_k^{m,\bar n}
\big)
\left|
\bar H_k^{\bar n}
(P_k^{n,\bar n},\Lambda_k^{n,\bar n})
-
\bar H_k^{\bar n}
(P_k^{m,\bar n},\Lambda_k^{m,\bar n})
\right|
\le
\eta
|\Lambda_k^{n,\bar n}-\Lambda_k^{m,\bar n}|^2
+
C_{\eta,\bar n}
|P_k^{n,\bar n}-P_k^{m,\bar n}|^2,
\end{aligned}
\]
where
\[
\Psi_\kappa(x)
:=
\frac{e^{\kappa x}-\kappa x-1}{\kappa},
\qquad x\in\mathbb R.
\]
It is therefore absorbed by Young's inequality in the
exponential-difference estimate. Consequently, there exist
processes $\Lambda_k^{\bar n}$ and $\Gamma_k^{\bar n}$ such
that
\begin{equation}\label{eq:first-strong-limit}
\begin{aligned}
\lim_{n\to\infty}
\sum_{k=1}^2
\Bigg[
\mathbb E\int_0^T
|\Lambda_k^{n,\bar n}(t)-\Lambda_k^{\bar n}(t)|^2dt
+
\mathbb E\int_0^T\int_{U_0}
|\Gamma_k^{n,\bar n}(t,z)-\Gamma_k^{\bar n}(t,z)|^2
\nu(dz)dt
\Bigg]
=0.
\end{aligned}
\end{equation}

Let $\widehat v_{1,k}^{\,n,\bar n}$ denote an untruncated minimizer corresponding to the approximate parameters. By the growth estimate $
|\widehat v_{1,k}^{\,n,\bar n}|
\le
C\big(
1+|\Lambda_k^{n,\bar n}|
\big),
$ established in Section 4, with $C$ independent of $n$. Thus, along an almost
everywhere convergent subsequence, the inner truncation is
eventually inactive. The continuity of the Hamiltonians,
\eqref{eq:first-P-limit}, and
\eqref{eq:first-strong-limit} give pointwise convergence of the Hamiltonian terms. Moreover,
\[
\left|
\underline H_k^n
\big(\omega,t,
P_1^{n,\bar n},
P_2^{n,\bar n},
\Lambda_k^{n,\bar n},
\Gamma_1^{n,\bar n},
\Gamma_2^{n,\bar n}
\big)
\right|
\le
C\big(
1+|\Lambda_k^{n,\bar n}|^2
\big).
\]
By \eqref{eq:first-strong-limit}, we have 
$
|\Lambda_k^{n,\bar n}|^2
\longrightarrow
|\Lambda_k^{\bar n}|^2
\quad\text{in }
L^1(d\mathbb P\otimes dt)
$(here and below, $f_n\to f$ in
$L^1(d\mathbb P\otimes dt)$ means
$\mathbb E\int_0^T|f_n(t)-f(t)|dt\to0$).
Hence, the family
$
\left\{
\underline H_k^n
\right\}_{n\ge1}
$
is uniformly integrable.

Moreover, the strong convergence in
\eqref{eq:first-P-limit} and
\eqref{eq:first-strong-limit} implies convergence in measure of the corresponding parameters. Every subsequence
therefore admits a further subsequence along which the parameters converge almost everywhere. Along such a further subsequence, the growth estimate for the untruncated minimizers implies that the inner truncation is eventually inactive, and hence
\[
\begin{aligned}
&
\underline H_k^n
\big(\omega,t,
P_1^{n,\bar n},
P_2^{n,\bar n},
\Lambda_k^{n,\bar n},
\Gamma_1^{n,\bar n},
\Gamma_2^{n,\bar n}
\big)
\longrightarrow
\underline H_k^*
\big(\omega,t,
P_1^{\bar n},
P_2^{\bar n},
\Lambda_k^{\bar n},
\Gamma_1^{\bar n},
\Gamma_2^{\bar n}
\big)
\end{aligned}
\]
$d\mathbb P\otimes dt$-a.e. By the subsequence principle, the inner Hamiltonians converge in measure along the whole sequence. Consequently, Vitali's theorem gives
\begin{align}
\lim_{n\to\infty}
\mathbb E\int_0^T
\Big|
&
\underline H_k^n
\big(\omega,t,
P_1^{n,\bar n},
P_2^{n,\bar n},
\Lambda_k^{n,\bar n},
\Gamma_1^{n,\bar n},
\Gamma_2^{n,\bar n}
\big)
-
\underline H_k^*
\big(\omega,t,
P_1^{\bar n},
P_2^{\bar n},
\Lambda_k^{\bar n},
\Gamma_1^{\bar n},
\Gamma_2^{\bar n}
\big)
\Big|dt
=0.
\label{eq:first-inner-Hamiltonian-limit}
\end{align}
Moreover, since $\bar H_k^{\bar n}$ is globally Lipschitz
for fixed $\bar n$,
\[
\begin{aligned}
&\mathbb E\int_0^T
\left|
\bar H_k^{\bar n}
\big(\omega,t,
P_k^{n,\bar n},\Lambda_k^{n,\bar n}
\big)
-
\bar H_k^{\bar n}
\big(\omega,t,
P_k^{\bar n},\Lambda_k^{\bar n}
\big)
\right|dt
\le
C_{\bar n}
\mathbb E\int_0^T
\left(
|P_k^{n,\bar n}-P_k^{\bar n}|
+
|\Lambda_k^{n,\bar n}-\Lambda_k^{\bar n}|
\right)dt
\longrightarrow0.
\end{aligned}
\]
Combining\eqref{eq:first-inner-Hamiltonian-limit} with the preceding convergence of the outer Hamiltonians,
we obtain
\begin{align}
\underline H_k^n
\big(\omega,t,
P_1^{n,\bar n},P_2^{n,\bar n},
\Lambda_k^{n,\bar n},
\Gamma_1^{n,\bar n},\Gamma_2^{n,\bar n}
\big)
+
\bar H_k^{\bar n}
\big(\omega,t,
P_k^{n,\bar n},\Lambda_k^{n,\bar n}
\big)
&\longrightarrow
\underline H_k^*
\big(\omega,t,
P_1^{\bar n},P_2^{\bar n},
\Lambda_k^{\bar n},
\Gamma_1^{\bar n},\Gamma_2^{\bar n}
\big)\nonumber\\
&\ \ \ +
\bar H_k^{\bar n}
\big(\omega,t,
P_k^{\bar n},\Lambda_k^{\bar n}
\big)
\label{eq:first-Hamiltonian-limit}
\end{align}
in $L^1(d\mathbb P\otimes dt)$. By \eqref{eq:first-P-limit},
\eqref{eq:first-strong-limit}, and
\eqref{eq:first-Hamiltonian-limit}, the drift terms in
\eqref{csre1224} converge in
$L^1(d\mathbb P\otimes dt)$. Moreover, the
BDG inequalities and
\eqref{eq:first-strong-limit} imply that the corresponding Brownian and compensated Poisson stochastic integrals
converge in $\mathcal S_{\mathbb F}^2$. Hence, passing to
the limit in the integral form of \eqref{csre1224}, we
obtain c\`adl\`ag versions of $P_1^{\bar n}$ and
$P_2^{\bar n}$ and conclude that
$(P^{\bar n},\Lambda^{\bar n},\Gamma^{\bar n})$ satisfies
\begin{equation}\label{eq:intermediate-bsdej}
\left\{
\begin{aligned}
dP_k^{\bar n}(t)
={}&-
\Big\{
(2A+C^2)P_k^{\bar n}(t-)
+2C\Lambda_k^{\bar n}(t)
+Q
+
\underline H_k^*
\big(\omega,t,
P_1^{\bar n}(t-),P_2^{\bar n}(t-),
\Lambda_k^{\bar n}(t),
\Gamma_1^{\bar n}(t,\cdot),
\Gamma_2^{\bar n}(t,\cdot)
\big)
\\
&\quad+
\bar H_k^{\bar n}
\big(\omega,t,
P_k^{\bar n}(t-),
\Lambda_k^{\bar n}(t)
\big)
\Big\}dt
+
\Lambda_k^{\bar n}(t)dW(t)
+
\int_{U_0}
\Gamma_k^{\bar n}(t,z)\widetilde N(dt,dz),
\\
P_k^{\bar n}(T)
={}&G(T),
\qquad k=1,2.
\end{aligned}
\right.
\end{equation}
By \eqref{eq:first-P-limit} and
\eqref{eq:first-strong-limit}, after passing to a
subsequence, we have $
P_k^{n,\bar n}(t)
\to
P_k^{\bar n}(t)
$ and
$
\Gamma_k^{n,\bar n}(t,z)
\to
\Gamma_k^{\bar n}(t,z)
$
$d\mathbb P\otimes dt\otimes\nu(dz)$-a.e.
Since a c\`adl\`ag process and its left-limit process agree
$dt$-a.e.,
\[
P_k^{n,\bar n}(t-)
+
\Gamma_k^{n,\bar n}(t,z)
\longrightarrow
P_k^{\bar n}(t-)
+
\Gamma_k^{\bar n}(t,z)
\]
$d\mathbb P\otimes dt\otimes\nu(dz)$-a.e. Passing to the limit in \eqref{eq:truncated-postjump-bounds}, we obtain
\begin{equation}\label{eq:intermediate-postjump-bound}
\varepsilon_0
\le
P_k^{\bar n}(t-)+\Gamma_k^{\bar n}(t,z)
\le K,  \ d\mathbb P\otimes dt\otimes\nu(dz)\text{-a.e.}
\end{equation}

Letting $n\to\infty$ in the second inequality of \eqref{eq:monotonicity-inner}, we obtain
$
P_k^{\bar n}(t)
\le
P_k^{\bar n+1}(t).
$
Hence,
$
P_k(t)
:=
\lim_{\bar n\to\infty}
P_k^{\bar n}(t)
$
is well defined and
\begin{equation}\label{eq:second-P-limit}
\varepsilon_0\le P_k(t)\le K,
\qquad
P_k^{\bar n}\longrightarrow P_k
\quad\text{in }L^2(d\mathbb P\otimes dt).
\end{equation}

The two-sided Hamiltonian estimates imply, uniformly in
$\bar n$,
\[
\begin{aligned}
&(2A+C^2)P_k^{\bar n}
+2C\Lambda_k^{\bar n}
+Q
+
\underline H_k^*
\big(\omega,t,
P_1^{\bar n},P_2^{\bar n},
\Lambda_k^{\bar n},
\Gamma_1^{\bar n},\Gamma_2^{\bar n}
\big)+
\bar H_k^{\bar n}
\big(\omega,t,
P_k^{\bar n},\Lambda_k^{\bar n}
\big)
\le
C\big(
1+|\Lambda_k^{\bar n}|^2
\big).
\end{aligned}
\]
Together with
$
|\Gamma_k^{\bar n}|
\le K-\varepsilon_0,
$
Itô's formula applied to $e^{\beta P_k^{\bar n}}$, with
$\beta>0$ sufficiently large, gives
\begin{equation*}\label{eq:second-uniform-estimate}
\sup_{\bar n\ge1}
\sum_{k=1}^2
\mathbb E\int_0^T
\left[
|\Lambda_k^{\bar n}(t)|^2
+
\int_{U_0}
|\Gamma_k^{\bar n}(t,z)|^2\nu(dz)
\right]dt
<\infty.
\end{equation*}
Here we used
$
e^{\beta x}-1-\beta x
\ge
c_\beta x^2,
|x|\le K-\varepsilon_0,
$ for some $c_\beta>0$.

By weak compactness, along a subsequence,
\[
\Lambda_k^{\bar n}
\rightharpoonup
\Lambda_k
\quad\text{in }\mathcal L_{\mathbb F}^2,
\qquad
\Gamma_k^{\bar n}
\rightharpoonup
\Gamma_k
\quad\text{in }\mathcal L_{\mathcal P}^{2,\nu}.
\]
For $\bar m>\bar n$, the processes $P_k^{\bar m}-P_k^{\bar n}$and their post-jump values are nonnegative.
More precisely, the compensation argument and
\eqref{eq:intermediate-postjump-bound} give
\[
0
\le
P_k^{\bar m}(t-)-P_k^{\bar n}(t-)
+
\Gamma_k^{\bar m}(t,z)-\Gamma_k^{\bar n}(t,z)
\le
K-\varepsilon_0
\]
$d\mathbb P\otimes dt\otimes\nu(dz)$-a.e.

Set
\[
\begin{aligned}
\mathcal R_k^{\bar r}
:={}&
\underline H_k^*
\big(\omega,t,
P_1^{\bar r},P_2^{\bar r},
\Lambda_k^{\bar r},
\Gamma_1^{\bar r},\Gamma_2^{\bar r}
\big)
+
\bar H_k^{\bar r}
\big(\omega,t,
P_k^{\bar r},\Lambda_k^{\bar r}
\big),
\qquad
\bar r\in\mathbb N^+.
\end{aligned}
\]
By the uniform quadratic estimate, $
|\mathcal R_k^{\bar r}|
\le
C\big(
1+|\Lambda_k^{\bar r}|^2
\big).
$
Consequently,
$$
\mathcal R_k^{\bar m}
-
\mathcal R_k^{\bar n}
\le
C\left(
1+|\Lambda_k^{\bar m}|^2
+|\Lambda_k^{\bar n}|^2
\right)
\le
C\left(
1+
|\Lambda_k^{\bar m}-\Lambda_k^{\bar n}|^2
+
|\Lambda_k^{\bar n}|^2
\right),
$$
where $C$ is independent of $\bar m$ and $\bar n$.
For $\bar m>\bar n$, set
$$\Delta P_k^{\bar m,\bar n}:=P_k^{\bar m}-P_k^{\bar n},\ \Delta\Lambda_k^{\bar m,\bar n}:=\Lambda_k^{\bar m}-\Lambda_k^{\bar n} \text{and} \Delta\Gamma_k^{\bar m,\bar n}:=\Gamma_k^{\bar m}-\Gamma_k^{\bar n}.$$
By monotonicity,
$
\Delta P_k^{\bar m,\bar n}(t-)\ge0.
$
Moreover, the compensation argument and
\eqref{eq:intermediate-postjump-bound} give
\[
0
\le
\Delta P_k^{\bar m,\bar n}(t-)
+
\Delta\Gamma_k^{\bar m,\bar n}(t,z)
\le
K-\varepsilon_0 \ \text{for}\  d\mathbb P\otimes dt\otimes\nu(dz) \text{-a.e}.
\]

For $\kappa>0$, define
$
\Psi_\kappa(x)
:=
\frac{e^{\kappa x}-\kappa x-1}{\kappa},
x\in\mathbb R.
$
Since both
$
\Delta P_k^{\bar m,\bar n}(t-)
\ \text{and}\
\Delta P_k^{\bar m,\bar n}(t-)
+
\Delta\Gamma_k^{\bar m,\bar n}(t,z)
$
belong to $[0,K-\varepsilon_0]$, there exists a constant
$d_\kappa>0$ such that
\[
\Psi_\kappa(x+y)
-
\Psi_\kappa(x)
-
\Psi_\kappa'(x)y
\ge
d_\kappa |y|^2
\]
whenever
$
x\in[0,K-\varepsilon_0],\
x+y\in[0,K-\varepsilon_0].
$

Applying It\^o's formula with jumps to
$\Psi_\kappa(\Delta P_k^{\bar m,\bar n})$ on $[0,T]$,
taking expectations, using the preceding jump coercivity estimate and the bound for
$\mathcal R_k^{\bar m}-\mathcal R_k^{\bar n}$, and summing
over $k=1,2$, we obtain
\begin{align}
&\mathbb E
\sum_{k=1}^2
\Psi_\kappa
\big(
\Delta P_k^{\bar m,\bar n}(0)
\big)
+
\sum_{k=1}^2
\mathbb E\int_0^T
\left[
\frac12
\Psi_\kappa''
\big(
\Delta P_k^{\bar m,\bar n}(t)
\big)
-
C
\Psi_\kappa'
\big(
\Delta P_k^{\bar m,\bar n}(t)
\big)
\right]
\left|
\Delta\Lambda_k^{\bar m,\bar n}(t)
\right|^2dt
\notag\\
&\quad+
d_\kappa
\sum_{k=1}^2
\mathbb E\int_0^T\int_{U_0}
\left|
\Delta\Gamma_k^{\bar m,\bar n}(t,z)
\right|^2
\nu(dz)dt
\notag\\
&\le
C
\sum_{k=1}^2
\mathbb E\int_0^T
\Psi_\kappa'
\big(
\Delta P_k^{\bar m,\bar n}(t)
\big)
\left[
1+
\left|
\Delta\Lambda_k^{\bar m,\bar n}(t)
\right|
+
|\Lambda_k^{\bar n}(t)|^2
\right]dt.
\label{eq:second-exponential-difference-estimate}
\end{align}

Set
$
a_\kappa(x)
:=
\frac12\Psi_\kappa''(x)
-
C\Psi_\kappa'(x).
$
Since $
\Psi_\kappa'(x)=e^{\kappa x}-1,
\
\Psi_\kappa''(x)=\kappa e^{\kappa x},
$
we have
$
a_\kappa(x)
=
\frac{\kappa}{2}
+
\left(
\frac{\kappa}{2}-C
\right)
\Psi_\kappa'(x).
$
Choose $\kappa>2C$, then
$
a_\kappa(x)\ge\frac{\kappa}{2}>0,\
0\le x\le K-\varepsilon_0.
$
Applying Young's inequality to the linear term on the
right-hand side of
\eqref{eq:second-exponential-difference-estimate}, we obtain
\[
\begin{aligned}
&C
\Psi_\kappa'
\big(
\Delta P_k^{\bar m,\bar n}
\big)
\left|
\Delta\Lambda_k^{\bar m,\bar n}
\right|
\le
\frac14
a_\kappa
\big(
\Delta P_k^{\bar m,\bar n}
\big)
\left|
\Delta\Lambda_k^{\bar m,\bar n}
\right|^2
+
C_\kappa
\left|
\Psi_\kappa'
\big(
\Delta P_k^{\bar m,\bar n}
\big)
\right|^2.
\end{aligned}
\]

The first term on the right-hand side is absorbed into the left-hand side of
\eqref{eq:second-exponential-difference-estimate}.
Since $\Psi_\kappa\ge0$, the first term on the left-hand
side may be discarded. We may now let
$\bar m\to\infty$ along the weakly convergent subsequence. The weights generated by
$\Psi_\kappa'$ and $\Psi_\kappa''$ are uniformly bounded and
converge strongly in every finite $L^p$ space. Hence, the
weighted weak lower semicontinuity argument in Appendix B of
\cite{HSX2025} gives
\[
\begin{aligned}
&\sum_{k=1}^2
\mathbb E\int_0^T
a_\kappa
\big(
P_k-P_k^{\bar n}
\big)
\left|
\Lambda_k-\Lambda_k^{\bar n}
\right|^2dt+
d_\kappa
\sum_{k=1}^2
\mathbb E\int_0^T\int_{U_0}
\left|
\Gamma_k-\Gamma_k^{\bar n}
\right|^2
\nu(dz)dt
\\
&\le
C
\sum_{k=1}^2
\mathbb E\int_0^T
\left[
q_k^{\bar n}
+
|q_k^{\bar n}|^2
+
q_k^{\bar n}
|\Lambda_k^{\bar n}|^2
\right]dt.
\end{aligned}
\]
Using
$
|\Lambda_k^{\bar n}|^2
\le
2|\Lambda_k^{\bar n}-\Lambda_k|^2
+
2|\Lambda_k|^2
$
and
$
a_\kappa
\big(
P_k-P_k^{\bar n}
\big)
=
\frac{\kappa}{2}
+
\left(
\frac{\kappa}{2}-C
\right)
q_k^{\bar n},
$
we may choose $\kappa$ sufficiently large and absorb the term
containing
$
q_k^{\bar n}
|\Lambda_k^{\bar n}-\Lambda_k|^2
$
into the left-hand side. Consequently,
\begin{align}
&c_0
\sum_{k=1}^2
\mathbb E\int_0^T
|\Lambda_k^{\bar n}(t)-\Lambda_k(t)|^2dt
+
d_\kappa
\sum_{k=1}^2
\mathbb E\int_0^T\int_{U_0}
|\Gamma_k^{\bar n}(t,z)-\Gamma_k(t,z)|^2
\nu(dz)dt
\notag\\
&\le
C
\sum_{k=1}^2
\mathbb E\int_0^T
\left[
q_k^{\bar n}(t)
+
|q_k^{\bar n}(t)|^2
\right]
\big(
1+|\Lambda_k(t)|^2
\big)dt,
\label{eq:key-second-stability}
\end{align}
where
$
q_k^{\bar n}(t)
:=
\Psi_\kappa'
\big(
P_k(t)-P_k^{\bar n}(t)
\big).
$
By \eqref{eq:second-P-limit},
we have  $q_k^{\bar n}\longrightarrow0, 
d\mathbb P\otimes dt\text{-a.e.},
$
and the sequence is uniformly bounded. The dominated
convergence theorem applied to
\eqref{eq:key-second-stability} gives
\begin{equation}\label{eq:second-strong-limit}
\begin{aligned}
\lim_{\bar n\to\infty}
\sum_{k=1}^2
\Bigg[
&
\mathbb E\int_0^T
|\Lambda_k^{\bar n}(t)-\Lambda_k(t)|^2dt
+
\mathbb E\int_0^T\int_{U_0}
|\Gamma_k^{\bar n}(t,z)-\Gamma_k(t,z)|^2
\nu(dz)dt
\Bigg]
=0.
\end{aligned}
\end{equation}

Although the limit pair $(\Lambda,\Gamma)$ was initially
identified along a weakly convergent subsequence,
\eqref{eq:key-second-stability} holds for every
$\bar n\in\mathbb N^+$. Indeed, for each fixed $\bar n$,
the limit $\bar m\to\infty$ is taken along the weakly
convergent subsequence, while $\bar n$ itself is arbitrary.
Since the right-hand side of
\eqref{eq:key-second-stability} converges to zero as
$\bar n\to\infty$ along the whole sequence,
\eqref{eq:second-strong-limit} holds for the entire sequence,
and not merely for the weakly convergent subsequence.

It remains to pass to the limit in the Hamiltonians. Let an
arbitrary subsequence of
$
\left\{
(P^{\bar n},\Lambda^{\bar n},\Gamma^{\bar n})
\right\}_{\bar n\ge1}
$
be given. By \eqref{eq:second-P-limit} and
\eqref{eq:second-strong-limit}, it admits a further
subsequence, still indexed by $\bar n$, such that, for
$k=1,2$,
$$
P_k^{\bar n}(t)\longrightarrow P_k(t),\ 
\Lambda_k^{\bar n}(t)\longrightarrow\Lambda_k(t),
\ \text{and}
\left\|
\Gamma_k^{\bar n}(t,\cdot)
-
\Gamma_k(t,\cdot)
\right\|_{L^2(U_0,\nu)}
\longrightarrow0
$$
for $d\mathbb P\otimes dt$-a.e. $(\omega,t)$. After passing
to a further subsequence if necessary, we may also assume
that
$
\Gamma_k^{\bar n}(t,z)
\longrightarrow
\Gamma_k(t,z),
$
$d\mathbb P\otimes dt\otimes\nu(dz)$-a.e.

Since each $P_k^{\bar n}$ is c\`adl\`ag, then
$
P_k^{\bar n}(t-)=P_k^{\bar n}(t)
$
for $d\mathbb P\otimes dt$-a.e. $(\omega,t)$. Hence, after
removing a common null set, the intermediate post-jump bound
\eqref{eq:intermediate-postjump-bound} may be written as
$
\varepsilon_0
\le
P_k^{\bar n}(t)
+
\Gamma_k^{\bar n}(t,z)
\le
K.
$
Passing to the limit gives
\begin{equation}\label{eq:limiting-postjump-bound-pre}
\varepsilon_0
\le
P_k(t)+\Gamma_k(t,z)
\le
K,\ 
d\mathbb P\otimes dt\otimes\nu(dz)\text{-a.e.},\qquad k=1,2.
\end{equation}

Together with \eqref{eq:second-P-limit},
\eqref{eq:limiting-postjump-bound-pre} shows that the
limiting parameters satisfy the same uniform pathwise and
post-jump bounds as the approximating parameters.
Consequently, the selector estimates in Proposition
\ref{prop:hamiltonian-saddle} apply to both the approximate
and limiting Hamiltonians:
\[
|\widehat v_{1,k}^{\,\bar n}|
+
|\widehat v_{2,k}^{\,\bar n}|
\le
C\bigl(1+|\Lambda_k^{\bar n}|\bigr) \ \text{and}\ 
|\widehat v_{1,k}|
+
|\widehat v_{2,k}|
\le
C\bigl(1+|\Lambda_k|\bigr).
\]

At every such point, the sequence
$\{\Lambda_k^{\bar n}(t)\}_{\bar n\ge1}$ is bounded. Therefore, the growth estimates for the minimizing and maximizing selectors imply that the approximate and limiting
optimizers are contained in a common finite ball, which may depend on $(\omega,t)$. Moreover, the outer truncation is
eventually inactive along this further subsequence. On every bounded control set
\[
\begin{aligned}
&\sup_{|v_1|+|v_2|\le R}
\left|
H_k
\big(\omega,t,
v_1,v_2,
P_1^{\bar n},P_2^{\bar n},
\Lambda_k^{\bar n},
\Gamma_1^{\bar n},\Gamma_2^{\bar n}
\big)
-
H_k
\big(\omega,t,
v_1,v_2,
P_1,P_2,
\Lambda_k,
\Gamma_1,\Gamma_2
\big)
\right|
\\
&\qquad\le
C_R
\left[
\sum_{i=1}^2|P_i^{\bar n}-P_i|
+
|\Lambda_k^{\bar n}-\Lambda_k|
+
\sum_{i=1}^2
\|\Gamma_i^{\bar n}-\Gamma_i\|_{L^1(U_0,\nu)}
\right].
\end{aligned}
\]
Since $\nu(U_0)<\infty$, we have 
$\|\Gamma_i^{\bar n}-\Gamma_i\|_{L^1(U_0,\nu)}
\le
\nu(U_0)^{1/2}
\|\Gamma_i^{\bar n}-\Gamma_i\|_{L^2(U_0,\nu)}.
$
It follows that, along the chosen further subsequence,
\begin{align*}
&
\underline H_k^*
\big(\omega,t,
P_1^{\bar n},
P_2^{\bar n},
\Lambda_k^{\bar n},
\Gamma_1^{\bar n},
\Gamma_2^{\bar n}
\big)
+
\bar H_k^{\bar n}
\big(\omega,t,
P_k^{\bar n},
\Lambda_k^{\bar n}
\big)
\longrightarrow
\underline H_k^*
\big(\omega,t,
P_1,
P_2,
\Lambda_k,
\Gamma_1,
\Gamma_2
\big)
+
\bar H_k^*
\big(\omega,t,
P_k,
\Lambda_k
\big),  
\end{align*}
for $ d\mathbb P\otimes dt$-a.e.

Since the original subsequence was arbitrary, the
subsequence principle implies that the whole sequence of
Hamiltonians converges in measure on
$\Omega\times[0,T]$. Moreover,
\[
\left|
\underline H_k^*
\big(\omega,t,
P_1^{\bar n},P_2^{\bar n},
\Lambda_k^{\bar n},
\Gamma_1^{\bar n},\Gamma_2^{\bar n}
\big)
+
\bar H_k^{\bar n}
\big(\omega,t,
P_k^{\bar n},\Lambda_k^{\bar n}
\big)
\right|
\le
C\big(
1+|\Lambda_k^{\bar n}|^2
\big).
\]
By \eqref{eq:second-strong-limit}, we have 
$
|\Lambda_k^{\bar n}|^2
\longrightarrow
|\Lambda_k|^2\ 
\text{in }\ 
L^1(d\mathbb P\otimes dt).
$
Hence,
$\left\{
1+|\Lambda_k^{\bar n}|^2
\right\}_{\bar n\ge1}
$
is uniformly integrable. Therefore, the Hamiltonians are uniformly integrable. Combining their convergence in measure
with Vitali's theorem, we obtain convergence in $L^1(d\mathbb P\otimes dt)$.

Together with \eqref{eq:second-P-limit} and
\eqref{eq:second-strong-limit}, the preceding
$L^1$-convergence of the Hamiltonians implies that the drift
terms in \eqref{eq:intermediate-bsdej} converge in
$L^1(d\mathbb P\otimes dt)$. Moreover, the
BDG inequalities imply that the
Brownian and compensated Poisson stochastic integrals
converge in $\mathcal S_{\mathbb F}^2$. Therefore, passing
to the limit in the integral form of
\eqref{eq:intermediate-bsdej}, we obtain c\`adl\`ag
processes $P_1$ and $P_2$ such that
$
(P_1,P_2,\Lambda_1,\Lambda_2,\Gamma_1,\Gamma_2)
$
satisfies \eqref{csre1}. These c\`adl\`ag processes coincide
with the monotone limits in
$L^2(d\mathbb P\otimes dt)$. Since a c\`adl\`ag process and its left-limit process agree $dt$-a.e.,
\eqref{eq:limiting-postjump-bound-pre} yields
\begin{equation*}\label{eq:limiting-postjump-bound}
\varepsilon_0
\le
P_k(t-)+\Gamma_k(t,z)
\le
K,\ d\mathbb P\otimes dt\otimes\nu(dz)\text{-a.e.},
\qquad k=1,2,
\end{equation*}

Finally, the c\`adl\`ag versions coincide with the monotone
limits $d\mathbb P\otimes dt$-a.e. Therefore,
$
\varepsilon_0\le P_k(t)\le K
$
for $d\mathbb P\otimes dt$-a.e. $(\omega,t)$. By
right-continuity, these inequalities hold simultaneously for
all $t<T$, outside a single $\mathbb P$-null set. At $t=T$,
they follow from
\[
P_k(T)=G(T),
\qquad
\varepsilon_0
\le
\underline\delta
\le
G(T)
\le
\bar c
\le K.
\]
Consequently,
$
\mathbb P\left(
0<P_k(t)\le K,\quad 0\le t\le T
\right)=1,
\qquad k=1,2.
$
\end{proof}

 \end{document}